\documentclass[a4paper,10pt,reqno]{amsart}

\usepackage[T1]{fontenc}
\usepackage[utf8]{inputenc}
\usepackage[italian, english]{babel}
\usepackage{amsmath, amsfonts, amssymb, amsthm, amscd, mathtools}
\usepackage{geometry}
\usepackage[square,numbers]{natbib}	
\usepackage{bbm, bm}
\usepackage{xspace} 
\usepackage{enumerate, enumitem}
\usepackage{hyperref}
\usepackage{xcolor}
\usepackage{placeins}

\newcommand{\bbF}{{\ensuremath{\mathbb F}} }

\newcommand{\bbP}{{\ensuremath{\mathbb P}} }

\newcommand{\cB}{{\ensuremath{\mathcal B}} }
\newcommand{\cC}{{\ensuremath{\mathcal C}} }

\newcommand{\cE}{{\ensuremath{\mathcal E}} }
\newcommand{\cF}{{\ensuremath{\mathcal F}} }

\newcommand{\cK}{{\ensuremath{\mathcal K}} }
\newcommand{\cL}{{\ensuremath{\mathcal L}} }
\newcommand{\cM}{{\ensuremath{\mathcal M}} }

\newcommand{\cP}{{\ensuremath{\mathcal P}} }

\newcommand{\cR}{{\ensuremath{\mathcal R}} }
\newcommand{\cS}{{\ensuremath{\mathcal S}} }
\newcommand{\cT}{{\ensuremath{\mathcal T}} }

\newcommand{\R}{\mathbb{R}}

\newcommand{\N}{\mathbb{N}}

\renewcommand{\P}{\mathbb{P}}
\newcommand{\E}{\mathbb{E}}

\newcommand{\ind}{\ensuremath{\mathbf{1}}}

\DeclarePairedDelimiterX{\inprod}[2]{\langle}{\rangle}{#1, #2}

\theoremstyle{plain}
\newtheorem{theorem}{Theorem}[section]
\newtheorem*{theorem*}{Theorem}
\newtheorem{lemma}[theorem]{Lemma}
\newtheorem*{lemma*}{Lemma}
\newtheorem{proposition}[theorem]{Proposition}
\newtheorem*{proposition*}{Proposition}
\newtheorem{corollary}[theorem]{Corollary}

\theoremstyle{definition}
\newtheorem{definition}{Definition}[section]
\newtheorem{assumption}{Assumption}[section]

\theoremstyle{remark}
\newtheorem{remark}{Remark}[section]
\newtheorem*{remark*}{Remark}

\definecolor{darkviolet}{rgb}{0.58, 0.0, 0.83}

\definecolor{gre}{rgb}{0.03,0.50,0.03}

\numberwithin{equation}{section}

\hypersetup{pdftitle={MFG\_to\_MFC},pdfauthor={Andrea Amato, Federico Cannerozzi, Giorgio Ferrari},hidelinks,%
pdfstartview={XYZ null null 1.4},colorlinks=true,linkcolor=blue,urlcolor=blue}

\newcommand{\mail}[1]{\href{mailto:#1}{\normalfont\texttt{#1}}}

\makeatletter
\def\@setthanks{\vspace{-\baselineskip}\def\thanks##1{\@par##1\@addpunct.}\thankses}
\makeatother

\begin{document}

\title[Mean-field singular control]{On Mean-field Singular Stochastic Control Problems}
\author[A.~Amato]{Andrea~Amato\textsuperscript{\MakeLowercase{a},1}}
\thanks{\noindent \textsuperscript{a} Dipartimento di Matematica, Universit\`a di Bologna, Bologna, Italy.}
\author[F.~Cannerozzi]{Federico~Cannerozzi\textsuperscript{\MakeLowercase{b},2}}
\thanks{\noindent \textsuperscript{b} Bielefeld University, Center for Mathematical Economics (IMW), Bielefeld (Germany).}
\author[G.~Ferrari]{Giorgio~Ferrari\textsuperscript{\MakeLowercase{b},3}}
\thanks{\noindent
\noindent \textsuperscript{1} E-mail: \mail{andrea.amato9@unibo.it}.
\\
\noindent \textsuperscript{2} E-mail: \mail{federico.cannerozzi@uni-bielefeld.de}.
\\
\noindent \textsuperscript{3} E-mail: \mail{giorgio.ferrari@uni-bielefeld.de}}

\date{\today}

\begin{abstract}
We study a class of mean-field control (MFC) problems with singular controls over a finite horizon, allowing for general dependence of the cost functional on the measure argument. We derive an auxiliary mean-field game (MFG) with singular controls, which we refer to as a potential MFG, and show that, under suitable convexity assumptions, any solution to this potential MFG yields a solution to the original MFC problem.
We apply this general result to a version of the classical Monotone Follower Problem by I.\ Karatzas and S.\ E.\ Shreve (\emph{SIAM J.\ Control Optim.} 22(6), pp.\ 856–877, 1984) with scalar mean-field interaction.
The associated potential MFG with singular controls is solved by exploiting its connection with optimal stopping for the optimization step and by a suitable application of the Kakutani–Fan–Glicksberg fixed-point theorem. In the case of strategic complementarities, the mean-field equilibrium (and hence the optimal policy of the original MFC problem) is characterized by a continuous nonincreasing free boundary that uniquely solves a nonlinear integral equation.
To the best of our knowledge, this is the first paper to provide a complete characterization of the optimal policy in a finite-horizon mean-field singular stochastic control problem.
\end{abstract}

\maketitle

{\small 

\noindent \textbf{Keywords:} singular stochastic control; mean-field control; mean-field games; mean-field monotone follower problem; free boundary.

\smallskip

\noindent \textbf{AMS 2020:} } 49N80, 65D15, 91A16, 93E20.

\smallskip

\bigskip


\section{Introduction}
\label{into}

In recent years, the study of mean-field games (MFGs) and mean-field control (MFC) problems
has become central to the analysis of stochastic control systems with a large number of interacting
agents, where individual behavior is coupled through the empirical distribution of states and/or
controls. For an overview of methodologies, techniques, and applications, we refer to the two-volume monograph \cite{carmona2018probabilistic_vol1, carmona2018probabilistic_vol2}. However, the existing literature has primarily focused on MFGs and MFC problems with regular (classical) control strategies, and comparatively little is known about models involving singular controls or optimal stopping, in particular about the structure of their solutions.

In regular control settings, it is well known that a certain class of MFGs, known as ``potential mean-field games'', can be solved by studying an auxiliary MFC problem (the earliest mention of this relation appears already in the seminal work \cite[Sect.\ 2.6]{LasryLions}; see also \cite{Graber} for a recent detailed review of potential MFGs). This connection has proved useful for establishing existence results, equilibrium selection, and learning procedures (see \cite{Cardaliaguet1,Cardaliaguet2}, among others, for learning algorithms, and \cite[Sect.\ 3]{Graber} and the references therein for a review of the selection problem). More recently, the work \cite{hofer2024optimal} showed, in a general non-Markovian setting, that any solution to a given MFC problem induces a mean-field equilibrium for an associated MFG, in which the running and terminal cost functions appearing in the representative player’s objective are derived from the cost functional of the MFC problem and from its linear derivatives with respect to the measure variable. 

In this paper, we consider a class of mean-field singular stochastic control problem and, under suitable convexity requirements, we obtain a somewhat \emph{vice versa} result to that achieved in \cite{hofer2024optimal} in the setting of regular control problems. More precisely, we show that, given a mean-field singular stochastic control problem whose data satisfy suitable growth, regularity, and convexity assumptions, it is possible to derive an MFG with singular controls such that any equilibrium of this MFG yields a solution to the corresponding MFC problem with singular controls. We believe that this result has two notable consequences. First, as we demonstrate in a mean-field version of the classical Monotone Follower Problem of \cite{karatzas1984monotone} (see below for a discussion on this contribution of the paper), the established relation between MFC problems and MFGs paves the way for the characterization of optimal policies in mean-field singular stochastic control problems.
Second, it implies uniqueness of mean-field equilibria of the auxiliary MFG whenever the original MFC problem admits a unique solution, which is guaranteed in problems in which the performance criterion is strictly convex/concave with respect to the singular control variable.

Characterizing equilibria in MFGs with singular controls is comparatively simpler than characterizing solutions to mean-field singular stochastic control problems, since in MFGs the flow of measures $\mu$, representing the distribution of the states (and possibly the actions) of the other players, is given and fixed. Consequently, the first step in the solution of the auxiliary MFG consists of solving a singular stochastic control problem parametrized by $\mu$, followed by a fixed-point argument. Although this two-step approach introduces an additional fixed-point problem, it is more tractable for characterizing optimal solutions than directly addressing the MFC problem with singular controls. Indeed, when the latter is approached via dynamic programming techniques, one would need to analyze a variational inequality on the space of probability measures, study its regularity, and construct a solution to a Skorokhod reflection problem on an infinite-dimensional space (see, e.g., \cite{Guo-etal}). When approached via the Pontryagin maximum principle, one would again need to study a reflected BSDE with an endogenously determined reflection condition depending on the law of the reflected process.

As already anticipated, the aforementioned connection MFC-MFGs is tested on a mean-field version of the seminal Monotone Follower Problem in \cite{karatzas1984monotone}. Here, a decision maker aims to track a Brownian trajectory via a nondecreasing process (the monotone follower), with the goal of minimizing an expected cost functional over a finite time horizon. This intertemporal cost functional consists of the cumulative expected cost of exerting control and of the time integral of a quadratic running cost function penalizing the misplacement of the current controlled state level with respect to $\alpha$ times its current average, for some parameter $\alpha \in \R$ measuring the strength of the strategic interaction.

According to the general recipe, we derive the associated auxiliary MFG of singular controls and prove the existence of a mean-field equilibrium. This is achieved through a suitable application of the Kakutani–Fan–Glicksberg fixed-point theorem to the best-reply map, which we show to be well defined and to map a subset of $L^2([0,T])$ into itself, endowed with the topology induced by weak convergence. Furthermore, the mean-field equilibrium is unique, since it coincides with the optimizer of the MFC problem, whose uniqueness follows from the strict convexity of the cost functional with respect to the singular control variable.

The unique mean-field equilibrium is shown to be characterized by a moving free boundary, which depends on time and on the time-dependent mean-field parameter. Notably, in the case in which the interaction parameter satisfies $\alpha \in (0,2)$, we are able to push our analysis further and prove that, for any given and fixed mean-field parameter $(\theta_t)_{t\in [0,T]}$, the free boundary is the unique solution to a nonlinear integral equation within a suitable class of continuous and nonincreasing functions of time. This result allows us to derive a system of functional equations that uniquely identifies the mean-field equilibrium. Indeed, the integral equation for the free boundary is coupled with the consistency condition requiring that, at each time, the equilibrium mean-field parameter coincides with the expected value of the optimally singularly controlled Brownian trajectory. To the best of our knowledge, a similar characterization of the optimal policy in mean-field singular stochastic control problems appears here for the first time.

A simple iterative scheme then allows us to plot the equilibrium control (which also coincides with the optimal control for the original MFC problem) and its expected value. Notice that this iterative scheme converges to the unique equilibrium since, in the case $\alpha \in (0,2)$ considered here, the MFG exhibits strategic complementarity and the best-reply map is therefore monotone increasing (see also \cite{dianetti2020nonzero,dianettiferrarifischer,dianetti2023unifying} for papers on MFGs with strategic complementarities).
\smallskip

\textbf{Related Literature.} Here we provide a review of the literature on MFGs and MFC problems with singular controls that is relevant to our study.

Abstract existence results for solutions to MFGs with singular controls, in general frameworks allowing for extended formulations, have been obtained in \cite{DenkertHorst,Fu,FuHorst} by means of topological fixed-point theorems, and in \cite{dianetti2023unifying} via lattice-theoretic arguments, in settings that may also feature common noise. Several papers have also addressed the problem of characterizing equilibria in specific examples arising from applications. In this regard, we mention \cite{Aid-etal,Campi-etal,CannerozziFerrari,Cao-etal,CaoGuo,Christensen-etal,DianettiLQ,DianettiFerrariTzouanas,Ferrari-Tzouanas,GuoXu}, where questions related to optimal investment in one-dimensional or Markov-modulated one-dimensional settings have been studied, possibly also in stationary frameworks. Finally, a class of degenerate MFGs with singular controls arising from the relaxation and entropy-regularization of MFGs of optimal stopping is studied in \cite{Dumitrescu-etal}, with the aim of the theoretical development of reinforcement-learning algorithms.

Recently, MFC problems with multidimensional singular controls and nonlinear jump impacts have been studied in \cite{DenkertHorst-MFC}, where a dynamic programming principle is derived for the value function and, under additional regularity assumptions, the value function is shown to solve an appropriate variational inequality in the space of probability measures. The work \cite{Bo-etal} studies mean-field control problems with singular controls under general dynamic state-control-law constraints. Using a relaxed control formulation and compactification arguments, the existence of optimal controls is established. By treating the controlled McKean-Vlasov dynamics as an infinite-dimensional constraint, the problem is reformulated to derive a stochastic maximum principle, a constrained BSDE via Lagrange multipliers, and results on uniqueness and stability of the associated constrained FBSDE.

In \cite{CannerozziFerrari}, a specific one-dimensional ergodic MFC problem of irreversible investment is studied, and comparisons between different notions of equilibrium are performed through closed-form solutions; namely, comparisons between Nash equilibrium (i.e.\ an MFG solution), Pareto efficiency (i.e.\ an MFC solution), and coarse-correlated equilibrium (a refinement of the Nash equilibrium concept).
Related to this work is also \cite{cannerozzi2026stationary}, where, in the case of an Ornstein-Uhlenbeck process with ergodic cost functional of quadratic type, solutions to the ergodic stationary MFC problem are shown to be in bijection with solutions to the associated potential MFG.

Finally, necessary and sufficient Pontryagin maximum principles for mean-field singular stochastic control problems have been obtained in \cite{Hafayed1,Hafayed2,ShiWu}, among others. In those works, the adjoint equation takes the form of a mean-field backward stochastic differential equation (BSDE). We also refer to the introduction of \cite{Hafayed1} for further references on mean-field BSDEs.

\smallskip
\textbf{Structure of the paper.} 
The rest of the paper is organized as follows: Section \ref{sec:preliminaries} gathers the notation and some preliminary notions about differentiability of functions of probability measures.
In Section \ref{sec:general} we state and prove the general result connecting mean-field singular control problems and singular mean-field games, while in Section \ref{sec:application} we focus on the mean-field version of the Monotone Follower Problem.
In particular, Section \ref{sec:ctrl_pb} solves the control problem for fixed interaction term, while the fixed-point step is addressed in Section \ref{sec:fixed_point}. Finally, Section \ref{sec:free_boundary} deals with the strategic complementarity case $\alpha \in (0,2)$ and characterizes the free boundary as the unique solution to an integral equation.


\section{Preliminaries}\label{sec:preliminaries}

\subsection*{Notation}
Let $n \geq 1$, $d \geq 1$ be integers. We denote by $\R^{n\times d}$ the set of $n\times d$ matrices with real entries.
For $A \in \R^{n\times d}$, we denote by $A^\top$ its transpose.
We denote by $\cP_2(\R^n)$ the set of probability measures $\mu$ over $\R^n$ whose second moment is finite, i.e.\ $\int_{\R^n} \vert y \vert ^2 \mu(dy) < \infty $.
We equip $\cP_2(\R^n)$ with the topology generated by the the 2-Wasserstein distance on $\cP_2(\R^n)$ (see, e.g., \cite[p. 352]{carmona2018probabilistic_vol1} for the definition).
For $T >0$, we define by $\cM([0,T];\cP_2(\R^n))$ the set of measurable maps $\mu:[0,T] \to \cP_2(\R^n)$. We refer to any $\mu=(\mu_t)_{t \in [0,T]} \in \cM([0,T];\cP_2(\R^n))$ as a measurable flow of measures.

\subsection*{Differentiability of Functions of Probability Measures}
For the reader's convenience, we recall here some known definitions and facts about the differentiability of functions of measures. The following notions can be found in \cite[Chapter 5]{carmona2018probabilistic_vol1}.

\smallskip
We say that a continuous function $\phi:\cP_2(\R^n) \to \R$ is linearly differentiable if there exists a function $\delta_\mu \phi: \cP_2(\R^n) \times \R^n \to \R$ so that
\[
\phi(\mu) - \phi(\nu) = \int_0^1 \int_{\R^n} \delta_\mu \phi( t \mu + (1-t)\nu )(y) (\mu - \nu)(dy)dt,
\]
with $\delta_\mu \phi(\mu,y)$ being jointly continuous, of at most quadratic growth in $y \in \R^n$ uniformly in $\mu$ for $\mu \in \cK$, where $\cK$ is any bounded subset of $\cP_2(\R^n)$.

\smallskip
We say that a continuous function $\phi:\cP_2(\R^n) \to \R$ is L-differentiable at $\mu_0 \in \cP_2(\R^n)$ if there exists a measurable function $\partial_\mu \phi(\mu_0): \R^n \to \R^n$ such that, for any $\mu \in \cP_2(\R^n)$, for any atomless Polish probability space $\big( \tilde \Omega, \tilde{\cF}, \tilde \P\big)$, for any random variables $ \tilde X_0$ with law $\mu_0$ and $ \tilde{X}$ with law $\mu$, it holds
\begin{equation}\label{eq:def:L_derivative}
    \phi(\mu) - \phi(\mu_0) = \tilde{\E}[\partial_\mu \phi(\mu_0)( \tilde{X}_0)(\tilde{X}-\tilde{X}_0)] + o ( \Vert \tilde{X} - \tilde{X}_0 \Vert_{L^2}).
\end{equation}
We say that $\phi:\cP_2(\R^n) \to \R$ is L-differentiable if it is differentiable at any point $\mu \in \cP_2(\R^n)$.

\smallskip
We say a function $h:\R^n \times \cP_2(\R^n) \to \R$ is continuously jointly differentiable in the linear derivative sense (respectively, in the L-derivative sense) if the partial derivatives $\partial_x h(x,\mu)$ and $\delta_\mu h(x,\mu)(y)$ (respectively, $\partial_\mu h(x,\mu)(y)$) exist and are continuous with respect to the product topologies.

\smallskip
We say that a function $\phi:\R^n \times \cP_2(\R^n)$ is L-jointly convex if, for any $(x,\mu)$ and $(x',\mu')$ in $\R^n \times \cP_2(\R^n)$, it holds
\begin{equation} \label{eq:L-convexity}
    \phi(x, \mu) - \phi(x', \mu') \leq \partial_x \phi(x, \mu) (x - x') + \tilde{\E} \left[ \partial_\mu \phi(x, \mu)(\tilde X) \cdot \big(\tilde X - \tilde X'\big)\right],
\end{equation}
for any atomless Polish probability space $\big(\tilde{\Omega},\tilde{\cF},\tilde{\P}\big)$ and $\tilde{X}$, $\tilde{X}'$ random variables with law $\mu$, $\mu'$ respectively.

\section{The Mean-field Singular Stochastic Control Problem}\label{sec:general}

Let $T > 0$ be a fixed time horizon.
Let $n \geq 1$, $d \geq 1$, and $m \geq 1$ be integers.
Let $(\Omega, \cF, \bbF \coloneqq (\cF_t)_{t \geq 0}, \bbP)$ be a complete filtered probability space, with $\bbF$ satisfying the usual assumptions, on which a $d$-dimensional $\bbF$-Brownian motion $W$ is defined.

\begin{definition}
An admissible singular control is a process $\xi:[0,T] \times \Omega \to ([0,+\infty))^m \subseteq \R^m$ so that, for any $i=1,\dots,m$, $\xi^i$ is  $\bbF$-adapted, non-decreasing, càdlàg and $\xi^i_{0-} = 0$ $\P$-a.s., and such that $\E[ \vert \xi_T \vert ^2 ] < \infty$.
We denote by $\cB$ the set of admissible controls.
\end{definition}

\smallskip
Let $b:[0,T] \times \R^n \to \R^n$, $\sigma:[0,T] \times \R^n \to \R^{n \times d}$ and $\zeta:[0,T] \to \R^{n \times m}$  be measurable functions.
For any control $\xi \in \cB$, we consider the following dynamics:
\begin{equation}\label{eq:dynamics}
    d X^\xi_t = b\big(t,X^\xi_t\big) dt + \sigma \big(t, X^\xi_t\big) d W_t + \zeta(t)d \xi_t, \quad X^\xi_0 = x_0.
\end{equation}
Let $C: [0,T] \times \R^n \times \cP_2(\R^n) \to \R$ and $G: \R^n \times \cP_2(\R^n) \to \R$ be measurable functions. Let $K:[0,T] \to ([0,\infty))^m $ be continuous.
We associate to the dynamics \eqref{eq:dynamics} the following mean-field control problem: find the control that minimizes the cost functional
\begin{equation}\label{eq:cost_MFC}
   J(\xi) \coloneqq \E\left[ \int_{0}^{T} C\big(t, X^\xi_t, \cL(X^\xi_t)\big) dt + G\big(X^\xi_T, \mathcal{L} \big(X^\xi_T \big)\big) + \int_0^T K(t)d\xi_t \right],
\end{equation}
under the dynamics constraint \eqref{eq:dynamics}, where $\cL(X^{\xi}_t)$ stands for the law of the random variable $X^{\xi}_t$.

\begin{remark}
In the cost functional \eqref{eq:cost_MFC}, the Stieltjes integral with respect to $\xi$ is intended as is the integral over the entire interval $[0, T]$. Consequently, the jump of $\xi$ at time $T$ contributes to the value of the integral. With a slight abuse of notation, throughout the remainder of the paper, we therefore write
\[
\int_0^T K(t)d\xi_t := \int_{[0, T]} K(t)d\xi_t.
\]
\end{remark}

\medskip
We make the following assumptions:
\begin{assumption}\label{assumptions:general_part}

\begin{enumerate}[label=(\roman*)]
    \item $b,\sigma$ are Lipschitz continuous in $x \in \R^n$ uniformly in $t \in [0,T]$; 
    \item $\zeta(t)$ is a continuous function.
    \item The functions $\mu \mapsto C(t,x,\mu)$, $\mu \mapsto G(x,\mu)$ are linearly differentiable with jointly continuous linear derivatives $\delta_\mu C(t,x,\mu)(y)$ and $\delta_\mu G(x,\mu)(y)$.
    \item The maps $C(t,x,\mu)$, $G(x,\mu)$ and the linear derivatives $\delta_\mu C(t,x,\mu)(y)$ and $\delta_\mu G(x,\mu)(y)$ are jointly continuous and with at most quadratic growth, in the sense that
    \begin{equation*}
    \vert C(t,x,\mu) \vert + \vert G (x,\mu) \vert + \vert \delta_\mu C(t,x,\mu)(y) \vert + \vert \delta_\mu G(x,\mu)(y) \vert \\ \leq \kappa \Big( 1 + \vert x \vert^2 + \int_{\R^n} \vert z \vert^2 \mu(dz) + \vert y \vert^2 \Big),
    \end{equation*}
    for some positive constant $\kappa$.
    \item The partial derivatives $\partial_x C(t,x,\mu)$, $\partial_x G (x,\mu)$, $\partial_y \delta_\mu C(t,x,\mu)(y)$ and $\partial_y \delta_\mu \allowbreak G(x,\mu)(y)$ exist, are jointly continuous, and have at most linear growth, in the sense that
    \begin{multline*}
        \vert \partial_x C(t,x,\mu) \vert + \vert \partial_x G (x,\mu) \vert + \vert \partial_y\delta_\mu C(t,x,\mu)(y) \vert + \vert \partial_y\delta_\mu G(x,\mu)(y) \vert \\
        \leq \kappa \Big( 1 + \vert x \vert + \left( \int_{\R^n} \vert z \vert^2 \mu(dz)\right)^\frac{1}{2} + \vert y \vert \Big),
    \end{multline*}
\end{enumerate}
\end{assumption}
Notice that Assumption \ref{assumptions:general_part}(i) implies that the partial derivatives $\partial_x b(t,x)$ and $\partial_x\sigma \allowbreak (t, x)$ are uniformly bounded in $(t,x) \in [0,T] \times \R^n$.
Moreover, $(v)$ of Assumption \ref{assumptions:general_part} implies that $C(t,x,\mu)$ and $G(x,\mu)$ are also L-differentiable, with L-derivative (in the sense of equation \eqref{eq:def:L_derivative}) given by $\partial_\mu C(t,x,\mu)(y) = \partial_y \delta_\mu C(t,x,\mu)(y)$ and $\partial_\mu G(x,\mu)(y) \allowbreak = \partial_y \delta_\mu G(x,\mu)(y)$, by \cite[Proposition 5.48]{carmona2018probabilistic_vol1}.

\bigskip
We now introduce the associated potential MFG, in the same spirit as in \cite{hofer2024optimal}.
Let $c:[0,T] \times \R^n \times \cP_2(\R^n) \to \R$ and $g:\R^n \times \cP_2(\R^n) \to \R$ be given by
\begin{equation}
\begin{aligned} \label{eq:MFG:new_functions}
    c(t,x, \mu) &= C(t,x, \mu) + \int_{\R^n} \delta_\mu C(t,\tilde x, \mu) (x) \mu(d\tilde x), \\ 
    g(x, \mu) &= G(x, \mu) + \int_{\R^n} \delta_\mu G(\tilde x, \mu) (x) \mu(d\tilde x).  
\end{aligned}
\end{equation}
For $\mu=(\mu_t)_{t \in [0,T]} \in \cM([0,T];\cP_2(\R^n))$, we consider the following cost functional:
\begin{equation}\label{eq:MFG:cost}
    \begin{aligned}
    J_{g}(\xi,\mu)  \coloneqq \E\left[ \int_{0}^{T} c\big(t,X^\xi_t, \mu_t \big) dt + g\big(X^\xi_T, \mu_T \big)  + \int_0^T K(t)d\xi_t \right],
    \end{aligned}
\end{equation}
under the same dynamics constraint \eqref{eq:dynamics}.

\begin{definition}\label{def:MFG:solution}
We say that a pair $(\xi^*,\mu^*)$, with $\xi^* \in \cB$ and $\mu^* \in \cM([0, T]; \allowbreak \cP_2(\R^n))$, is a solution to the potential MFG if the following two properties hold:
\begin{enumerate}[label=(\roman*)]
    \item $J_g(\xi^*, \mu^*) \leq J_g(\xi, \mu^*)$, for any admissible controls $\xi \in \cB$, and
    \item $\mu^*_t = \cL\big(X^*_t\big)$, $t\in [0,T]$, where $X^*$ denotes the solution to equation \eqref{eq:dynamics} associated to the optimal control $\xi^*$.
\end{enumerate}
\end{definition}

Let $(\mu_t)_{t \in [0,T]} \in \cM([0,T];\cP_2(\R^n))$ be a fixed flow of measures.
For later use, we define the Hamiltonians of the MFC problem and the MFG problem, as 
\begin{multline*}
   [0,T] \times \R^n \times \cP_2(\R^n) \times \R^n \times \R^{n \times d} \ni (t,x, \mu, p,q) \mapsto \\H(t,x,\mu,p,q)\coloneqq b(t,x)p + \operatorname{Tr}(\sigma(t,x)^\top q) + C(t,x, \mu) \in \R,
\end{multline*}
and
\begin{equation*}
    [0,T] \times \R^n \times \R^n \times \R^{n \times d} \ni (t, x,p,q) \mapsto H^{\mu}(t,x,p,q)\coloneqq b(t,x)p + \operatorname{Tr}(\sigma(t,x)^\top q) + c(t,x, \mu_t) \in \R,
\end{equation*}
respectively.

\smallskip
We make the following convexity assumptions:
\begin{assumption} \label{assumption:general:convexity}
For any $(t,p, q) \in [0,T] \times \R^n \times \R^{n \times d}$, the terminal cost function $(x, \mu) \mapsto G(x, \mu)$ and the map $(x,\mu) \mapsto H(t,x,\mu,p,q)$ are L-jointly convex in the sense of equation \eqref{eq:L-convexity}.
\end{assumption}
The main result of this section is as follows:
\begin{theorem}\label{thm:main}
Let Assumptions \ref{assumptions:general_part} and \ref{assumption:general:convexity} hold.
Suppose that $(\xi^*, \mu^*)$ is a solution to the potential MFG.
Then, the control $\xi^*$ is optimal for the MFC problem. 
\end{theorem}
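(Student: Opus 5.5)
Throughout, fix a solution $(\xi^*,\mu^*)$ to the potential MFG, let $X^*=X^{\xi^*}$ (so $\mu^*_t=\cL(X^*_t)$), and take an arbitrary $\xi\in\cB$ with state $X=X^{\xi}$. I will show $J(\xi)-J(\xi^*)\ge 0$ by a Pontryagin-type sufficiency argument. The mean-field parts of the convexity inequalities will be absorbed using the extra linear-derivative terms in $c,g$, which is the whole point of definition \eqref{eq:MFG:new_functions}.

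The first step is to introduce the adjoint BSDE associated with the MFG problem at the fixed flow $\mu^*$:
\[
dY_t=-\big(\partial_x b(t,X^*_t)^\top Y_t+\textstyle\sum_j\partial_x\sigma^j(t,X^*_t)^\top Z^j_t+\partial_x c(t,X^*_t,\mu^*_t)\big)dt+Z_t\,dW_t,\qquad Y_T=\partial_x g(X^*_T,\mu^*_T).
\]
It is well posed in $L^2$ because the coefficients are bounded (Assumption \ref{assumptions:general_part}(i)) and the sources have linear growth (Assumption \ref{assumptions:general_part}(v)). I also need $X^*$ to be square integrable; this follows from $\E|\xi_T|^2<\infty$.

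A direct computation (differentiate under the integral, using (v)) gives
\[
\partial_x c(t,x,\mu)=\partial_x C(t,x,\mu)+\int\partial_\mu C(t,\tilde x,\mu)(x)\,\mu(d\tilde x),
\]
and similarly for $g$. Hence $Y$ is exactly the mean-field adjoint of the MFC problem at $\xi^*$.

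The second step is to derive the first-order condition for $\xi^*$ in the MFG problem, namely
\[
\E\Big[\int_{[0,T]}\big(K(t)+\zeta(t)^\top Y_{t}\big)\,d(\xi-\xi^*)_t\Big]\ge 0\quad\text{for all }\xi\in\cB.
\]
To get it, take $\xi^\varepsilon=\xi^*+\varepsilon(\xi-\xi^*)$, which is admissible for $\varepsilon\in[0,1]$ since convex combinations preserve monotonicity. Optimality gives $J_g(\xi^\varepsilon,\mu^*)\ge J_g(\xi^*,\mu^*)$. The variation process $\delta X$ solves a linear SDE driven by $\zeta\,d(\xi-\xi^*)$. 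Dividing by $\varepsilon$, letting $\varepsilon\to0$ (dominated convergence, with the quadratic growth of (iv) and linear growth of (v)), and applying Itô's product rule to $Y\cdot\delta X$ turns the $x$-derivative terms into the displayed inequality. The jump terms need care because $\xi$ is càdlàg with possible jumps, including at $T$. Since $Y$ is continuous, there are no covariation jump terms, and the Stieltjes integral over $[0,T]$ matches the remark's convention.

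The third step is a convexity comparison. Write $J(\xi)-J(\xi^*)$ and bound the running cost via Assumption \ref{assumption:general:convexity}, applied to $H$ with $(p,q)=(Y_t,Z_t)$ along the pair $(X_t,\cL(X_t))$, $(X^*_t,\mu^*_t)$. Since $H$ is convex as a whole, I will use it in Hamiltonian form:
\[
H(t,X_t,\cL(X_t),Y_t,Z_t)-H(t,X^*_t,\mu^*_t,Y_t,Z_t)\ge \partial_xH\cdot(X_t-X^*_t)+\tilde\E\big[\partial_\mu C(t,\tilde X^*_t,\mu^*_t)(X^*_t)\cdot(X_t-X^*_t)\big].
\]
This is the convexity inequality with the roles of the two points swapped. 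Using an independent copy, i.e.\ the product space, together with Fubini, the measure term becomes exactly the second summand of $\partial_x c$ integrated against $X_t-X^*_t$. The same treatment applies to $G$ at time $T$.

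Then apply Itô to $Y\cdot(X-X^*)$ on $[0,T]$, including the jump $\Delta\xi_T$. The $b$ and $\sigma$ contributions cancel with the $p,q$ parts of the Hamiltonian difference; the stochastic integrals are true martingales by the $L^2$ bounds. This leaves
\[
J(\xi)-J(\xi^*)\ge\E\int_{[0,T]}\big(K+\zeta^\top Y\big)\,d(\xi-\xi^*)\ge0.
\]

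The main obstacle is the second step: rigorously justifying the Gateaux derivative and the integration by parts with singular, jumping controls (including the jump at time $T$), while keeping all integrability in $L^2$. A fallback is to bypass the first-order condition by combining the second and third steps. Convexity of $g$ and of $c$ in $x$ is not assumed, so instead I would apply the third step directly to the differences $\xi^\varepsilon$ versus $\xi^*$, and use MFG optimality only through the limit $\varepsilon\downarrow0$ of difference quotients.
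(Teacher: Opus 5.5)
Your proposal is correct and follows essentially the paper's route: the adjoint BSDE of the MFG at $\mu^*$, the variational inequality $\E\big[\int_{[0,T]}(K(t)+\zeta(t)^\top Y_t)\,d(\xi-\xi^*)_t\big]\ge 0$, L-convexity of $G$ and $H$ with an independent copy of $(X^*_t,X_t)$ plus Fubini to turn the $\partial_\mu$-terms into the extra parts of $\partial_x g$ and $\partial_x c$, and Itô's product rule for $Y\cdot(X-X^*)$. The only differences are these: you derive the variational inequality directly by convex perturbation and duality, whereas the paper cites the singular maximum principle of Bahlali--Djehiche--Mezerdi; and your displayed Hamiltonian inequality holds only after taking $\E$, since the Fubini exchange of $X^*_t$ and $\tilde X^*_t$ is not valid pointwise in $\omega$.
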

\begin{proof}
Let $(\xi^*, \mu^*)  \in \cB \times \cM([0,T];\cP_2(\R^n))$ be a solution to the MFG, and denote by $X^*$ the solution to equation \eqref{eq:dynamics} associated with the control $\xi^*$.
Let $(p,q) = (p_t,q_t)_{t \in [0, T]}$ be the solution of the Backward Stochastic Differential Equation (BSDE)
\begin{equation} \label{eqn:MFG:adjoint}
    \begin{cases}
    dp_t = - \partial_x H^{\mu^*}\big(t,X_t^*, p_t, q_t\big) dt  + q_t dW_t, \quad t\in[0,T]\\
    p_T = \partial_x g \big(X^*_T, \mu^*_T\big),
    \end{cases}
\end{equation}
which exists and it is unique by \cite[Theorem 6.2.1]{pham2009continuous}.
Since $\xi^*$ is optimal for the cost function $J_g(\cdot,\mu^*)$, the stochastic maximum principle for singular stochastic controls (see \cite[Theorem 3.6]{bahlali2007maximum}) implies that, for any admissible strategy $\xi \in \cB$, it holds
\begin{equation}\label{eq:thm_main:negativity}
    \E\left[ \int_{0}^{T} (K(t) + {\zeta(t)}^\top p_t) \big(d\xi^{*}_t - d\xi_t\big) \right] \leq 0.
\end{equation}
Notice that \cite{bahlali2007maximum} requires the running and terminal cost to have bounded derivatives in the state variable. However, by employing the dominated convergence theorem in the usual way, the result can be extended to running and terminal cost with derivatives of at most linear growth, as given by Assumption \ref{assumptions:general_part}.

Let now $\xi \in \cB$ be an arbitrary admissible control.
Our goal is to prove $J\big(\xi^*\big) - J\big(\xi\big) \leq 0$.
By L-convexity of $G(x,\mu)$, we have
\begin{equation} \label{eqn:MFG->MFCproof:eqn1}
\begin{aligned}
    & J \big(\xi^*\big) - J\big(\xi\big)
    \leq \E\left[ \int_{0}^{T} \left( C\big(t,X^*_t, \mu^*_t\big) - C\big(t,X^\xi_t, \cL(X^{\xi}_t)\big) \right)dt + \int_0^T K(t)(d\xi^{*}_t - d\xi_t) \right] \\
    &\quad \ + \E\left[ \partial_x G\big(X^*_T, \mu^*_T\big) \big( X^*_T - X^\xi_T\big) + \tilde\E \left[ \partial_\mu G\big(X^*_T, \mu^*_T\big) (\tilde  X^*_T) \cdot \big( \tilde X^*_T - \tilde X^\xi_T\big)\right] \right],
\end{aligned}
\end{equation}
for any atomless Polish probability space $\big(\tilde{\Omega},\tilde{\cF},\tilde{\P}\big)$ and $\tilde X^*_T$, $\tilde X^\xi_T$ random variables with law $\mu^*_T$ and $\cL(X^{\xi}_T)$, respectively. We observe that
\begin{equation} \label{eqn:MFG->MFCproof:eqn2}
\begin{aligned}
    &\E\left[  \tilde\E \left[ \partial_\mu G\big(X^*_T, \mu^*_T\big) (\tilde  X^*_T) \cdot \big( \tilde X^*_T - \tilde X^\xi_T\big)\right] \right] = \E\left[  \tilde\E \left[ \partial_\mu G\big(\tilde X^*_T, \mu^*_T\big) (X^*_T) \cdot \big( X^*_T - X^\xi_T\big)\right] \right] \\
    &= \E\left[  \tilde\E \left[ \partial_\mu G\big(\tilde X^*_T, \mu^*_T\big) (X^*_T) \right] \big( X^*_T - X^\xi_T\big)\right] = \E\left[ \int_\R \partial_\mu G\big(\tilde x, \mu^*_T\big) (X^*_T)  \mu^*_T(d\tilde x) \cdot \big( X^*_T - X^\xi_T\big)\right]  \\
    &= \E\left[ \int_\R \partial_x \delta_\mu G\big(\tilde x, \mu^*_T\big) (X^*_T)  \mu^*_T(d\tilde x) \cdot \big( X^*_T - X^\xi_T\big)\right] = \E\left[ \partial_x \int_\R  \delta_\mu G\big(\tilde x, \mu^*_T\big) (X^*_T)  \mu^*_T(d\tilde x) \cdot \big( X^*_T - X^\xi_T\big)\right],
\end{aligned}
\end{equation}
where we applied Fubini's theorem in the first equality, the consistency condition \textit{(ii)} of the MFG solution in the third equality, the relation $\partial_\mu G(x,\mu)(y)=\partial_y\delta_\mu G(x,\mu)(y)$ in the the fourth equality and we exchanged the derivative and the integral in virtue of the growth and differentiability assumptions in Assumption \ref{assumptions:general_part} in the last equality.
By employing the processes $(p_t, q_t)_{t\in [0,T]}$ defined by equation \eqref{eqn:MFG:adjoint}, equation \eqref{eqn:MFG->MFCproof:eqn2} yields
\begin{equation}
\begin{aligned}
    \E&\left[ \partial_x G\big(X^*_T, \mu^*_T\big) \big( X^*_T - X^\xi_T\big) + \tilde\E \left[ \partial_\mu G\big(X^*_T, \mu^*_T\big) (\tilde  X^*_T) \cdot \big( \tilde X^*_T - \tilde X^\xi_T\big)\right] \right]  \\
    & = \E\left[ \left( \partial_x G\big(X^*_T, \mu^*_T\big) \ + \partial_x \int_\R \delta_\mu G\big(\tilde x, \mu^*_T\big) (X^*_T)  \mu^*_T(d\tilde x) \right) \big( \tilde X^*_T - \tilde X^\xi_T\big) \right] \\
    & = \E\left[ \partial_x g(X^*_T,\mu^*_T)(X^*_T - X^\xi_T) \right]  = \E\left[ p_T \big(X^*_T- X^\xi_T\big) \right].
\end{aligned}
\end{equation} 
By Itô's formula, it holds
\begin{multline*}
    \E\left[ p_T \big(X^*_T- X^\xi_T\big) \right] = \E\left[ \int_{0}^{T} \left( p_t \big(b\big(t,X^*_t\big) - b\big(t,X^\xi_t\big) \big) + \operatorname{Tr} \Big(\big(\sigma\big(t,X^*_t\big)- \sigma\big(t,X^\xi_t\big)\big)^\top q_t\Big) \right)dt \right] \\
    - \E\left[ \int_{0}^{T} \partial_x H^{\mu^*}\big(t,X^*_t, p_t, q_t\big)\big(X^*_t- X^\xi_t\big)  dt \right] + \E \left[ \int_{0}^{T}p_t \zeta(t) \big(d\xi^{*}_t - d\xi_t\big) \right].
\end{multline*}
Putting the last equality in \eqref{eqn:MFG->MFCproof:eqn1}, we get
\begin{equation*}
\begin{aligned}
    & J\big(\xi^*\big) - J\big(\xi\big) \leq  \E\left[ \int_{0}^{T} \left( H\big(t,X^*_t, \mu^*_t, p_t, q_t\big) - H\big(t,X^\xi_t, \cL(X^{\xi}_t) ,p_t, q_t\big) \right)dt \right] \\
    & - \E\left[ \int_{0}^{T} \left(\partial_x H^{\mu^*}\big(t,X^*_t, p_t, q_t\big)\big(X^*_t- X^\xi_t\big)\right)dt +  \int_{0}^{T} \big( K(t) + p_t \zeta(t) \big) \big(d\xi^{*}_t - d\xi_t\big)  \right].
\end{aligned}
\end{equation*}
Now, using the hypothesis of L-jointly convexity of the map $(x,\mu) \mapsto H(t,x,\mu,p,q)$ in Assumption \ref{assumption:general:convexity}, we have
\begin{equation} \label{eqn:MFG->MFCproof:eqn3}
\begin{aligned}
    & J\big(\xi^*\big) - J\big(\xi\big) \leq   \E\left[ \int_{0}^{T} \left(  \partial_x H\big(t,X^*_t, \mu^*_t, p_t, q_t\big) \big(X^*_t- X^\xi_t\big) \right) dt\right] \\
    &\quad \ + \E\left[ \int_{0}^{T} \left(\tilde\E \left[ \partial_\mu H(t,X^*_t,\mu^*_t,p_t,q_t) (\tilde X^*_t) \cdot \big(\tilde X^*_t - \tilde X^\xi_t\big)\right]\right) dt \right] \\
    &\quad \ - \E\left[ \int_{0}^{T} \partial_x H^{\mu^*}\big(t,X^*_t, p_t, q_t\big)\big(X^*_t- X^\xi_t\big)  dt  +\int_{0}^{T} (K(t) + {\zeta(t)}^\top p_t) \big(d\xi^{*}_t - d\xi_t\big) \right].
\end{aligned}
\end{equation}
for any atomless Polish probability space $\big(\tilde{\Omega},\tilde{\cF},\tilde{\P}\big)$ and $\tilde X^*_t$, $\tilde X^\xi_t$ random variables with law $\mu^*_t$, $\cL(X^\xi_t)$ respectively.
By noticing that $\partial_\mu H(t,x,\mu,p,q)(y) = \partial_\mu C(t,x,\mu)(y)$, the same calculations as in equation \eqref{eqn:MFG->MFCproof:eqn2} yield
\begin{equation*}
     \E\left[ \tilde\E \left[  \partial_\mu H\big(t,X^*_t, \mu^*_t, p_t, q_t \big) (\tilde  X^*_t) \cdot \big( \tilde X^*_t - \tilde X^\xi_t\big) \right] \right] = \E\left[  \partial_x \int_\R \delta_\mu C\big(t,\tilde x, \mu^*_t\big) (X^*_t)  \mu^*_t(d\tilde x) \cdot \big( X^*_t - X^\xi_t\big)\right],
\end{equation*}
and recalling equation \eqref{eqn:MFG->MFCproof:eqn3}, we obtain
\begin{equation}
\begin{aligned}
    & J\big(\xi^*\big) - J\big(\xi\big) \leq   \E\left[ \int_{0}^{T} \left(  \partial_x H\big(t,X^*_t, \mu^*_t, p_t, q_t\big) + \partial_x \int_\R \delta_\mu C\big(t,\tilde x, \mu^*_t\big) (X^*_t)  \mu^*_t(d\tilde x) \right) \big( X^*_t - X^\xi_t\big) dt\right] \\
    & \quad - \E\left[ \int_{0}^{T} \partial_x H^{\mu^*}\big(t,X^*_t, p_t, q_t\big)\big(X^*_t- X^\xi_t\big)  dt  \int_{0}^{T} (K(t) + {\zeta(t)}^\top p_t) \big(d\xi^{*}_t - d\xi_t\big) \right] \\
    &\leq \E\left[ \int_{0}^{T} (K(t) + {\zeta(t)}^\top p_t) \big(d\xi^{*}_t - d\xi_t\big)  \right] \leq 0
\end{aligned}
\end{equation}
where the last inequality follows from \eqref{eq:thm_main:negativity}.
This concludes the proof.
\end{proof}

From Theorem \ref{thm:main}, we have the following simple result concerning the uniqueness of solutions to the associated potential MFG:
\begin{corollary}\label{corollary:uniqueness}
Suppose that there exists at most one solution to the MFC problem \eqref{eq:cost_MFC}. Then, there exists at most one solution to the potential MFG \eqref{eq:MFG:cost} as well.
\end{corollary}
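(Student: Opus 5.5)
The plan is to deduce uniqueness for the potential MFG directly from Theorem \ref{thm:main}, by arguing by contradiction. Suppose $(\xi^1,\mu^1)$ and $(\xi^2,\mu^2)$ are two solutions to the potential MFG in the sense of Definition \ref{def:MFG:solution}. By Theorem \ref{thm:main}, which we may invoke since Assumptions \ref{assumptions:general_part} and \ref{assumption:general:convexity} are in force, both $\xi^1$ and $\xi^2$ are optimal for the MFC problem \eqref{eq:cost_MFC}. Since by hypothesis the MFC problem admits at most one solution, we conclude $\xi^1=\xi^2$. Here uniqueness of the MFC solution is understood up to indistinguishability of processes in $\cB$.

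It then remains to show $\mu^1=\mu^2$. Since the dynamics \eqref{eq:dynamics} have Lipschitz coefficients $b,\sigma$ by Assumption \ref{assumptions:general_part}(i), and the control enters additively through $\zeta(t)d\xi_t$, the state equation admits a pathwise unique strong solution for each $\xi\in\cB$. I would justify this, for instance, by the change of variables $Y_t = X_t - \int_0^t\zeta(s)d\xi_s$, which reduces the equation to an SDE with coefficients that are Lipschitz in $Y$ and random but adapted. Hence $\xi^1=\xi^2$ implies $X^{\xi^1}=X^{\xi^2}$ up to indistinguishability. By the consistency condition (ii) of Definition \ref{def:MFG:solution}, we then have $\mu^1_t=\cL(X^{\xi^1}_t)=\cL(X^{\xi^2}_t)=\mu^2_t$ for every $t\in[0,T]$. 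This gives the uniqueness of the pair.

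The only mildly delicate point is this identification of the flow. We must make sure that uniqueness in the MFC problem is meant up to indistinguishability, or at least up to a null set of $\P$. We must also make sure that pathwise uniqueness for \eqref{eq:dynamics} with a càdlàg finite-variation input is available. Both are standard, so I do not expect a genuine obstacle. The argument is essentially a two-line consequence of Theorem \ref{thm:main} together with the consistency condition.
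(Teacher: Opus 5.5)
Your proposal is correct and follows the paper's proof: apply Theorem \ref{thm:main} to both MFG solutions, use uniqueness for the MFC problem to get $\xi^1=\xi^2$, and then use the consistency condition to conclude $\mu^1=\mu^2$. Your extra justification via pathwise uniqueness of \eqref{eq:dynamics}, which the paper leaves implicit, is a welcome addition; the ``by contradiction'' framing is unnecessary, since the argument is direct.
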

\begin{proof}
Let $(\xi^1,\mu^1)$ and $(\xi^2,\mu^2)$ be two solutions to the potential MFG. By Theorem \ref{thm:main}, both $\xi^1$ and $\xi^2$ are solutions to the MFC problem.
By uniqueness, we get $\xi^1 = \xi^2$, which implies that $\mu^1=\mu^2$ as well, and so the two MFG solutions coincide.
\end{proof}

\section{A Case Study: A Mean-field Monotone Follower Problem}\label{sec:application}

In this section, we consider a mean-field control version of the monotone follower problem introduced by Karatzas and Shreve in \cite{karatzas1984monotone}.
Let $\rho > 0$,  $K > 0$ and $\alpha \in \R$.
The problem is as follows:
find the control $\xi^*$ which minimizes
\begin{equation}\label{eq:application:mfc_payoff}
    J(\xi) \coloneqq \E \left[ \frac{1}{2}\int_0^T e^{-\rho t}(X^\xi_t - \alpha \E[X^{\xi}_t])^2dt + K \int_{0}^{T} e^{-\rho t} d\xi_t \right],
\end{equation}
under the dynamics constraint
\begin{equation}\label{eq:application:dynamics}
    X^{\xi}_t = x + \sigma W_t - \xi_t,
\end{equation}
for any process $\xi=(\xi_t)_{t \in [0,T]}$ $\bbF$-adapted, right-continuous, non-decreasing, $\xi_{0-} = 0$ $\P$-a.s.\ and $\E[\xi^2_T] < \infty$.
The MFC problem under study fits the framework of Section \ref{sec:general}, with $n=d=m=1$, $b(t,x) \equiv 0$, $\sigma(t,x) \equiv \sigma$, $\zeta(t) \equiv -1$ and 
\[
C(t,x,\mu) = \frac{e^{-\rho t}}{2}\Big( x-\alpha \int_{\R} y \mu(dy) \Big)^2, \quad G(x,\mu) = 0, \quad K(t) = K e^{-\rho t}.
\]
It is straightforward to see that $C(t,x,\mu)$ satisfies Assumptions \ref{assumptions:general_part} and \ref{assumption:general:convexity}.
In particular, the linear derivative of $C(t,x,\mu)$ is given by
\[
\delta_\mu C(t,x,\mu)(y) = -\alpha e^{-\rho t}\Big(x-\alpha \int_{\R} y \mu(dy) \Big) y,
\]
so that the instantaneous cost $c(t,x,\mu)$ of the potential MFG is given by
\begin{equation*}
    c(t,x,\mu) = e^{-\rho t}\left( \frac{1}{2}\Big(x-\alpha \int_{\R} y \mu(dy) \Big)^2 - \alpha(1-\alpha) x \int_{\R} y \mu(dy) \right).
\end{equation*}
As the dependence on the measure is of scalar type, to define the potential MFG, it is enough to consider a measurable real-valued process $\theta=(\theta_t)_{t \in [0, T]}$ instead of a measurable flow of measure $\mu=(\mu_t)_{t \in [0, T]} \in \cM([0, T],\cP_2(\R))$.
Thus, for any real-valued measurable process $\theta$, the cost functional of the associated potential MFG is given by
\begin{equation}\label{eq:application:mfg_cost}
    J_g(\xi,\theta) \coloneqq \E \left[ \int_0^T e^{-\rho t}\left( \frac{1}{2}(X^\xi_t - \alpha \theta_t)^2 -\alpha(1-\alpha)\theta_t X^\xi_t \right) dt + K \int_{0}^{T} e^{-\rho t} d\xi_t  \right].
\end{equation}

\begin{definition}\label{app:def:MFG_sol}
We say that a pair $(\xi^*,\theta^*)$, with $\xi^* \in \cB$ and $\theta^*$ a measurable real-valued process, is a solution to the potential MFG if the following two properties hold:
\begin{enumerate}[label=(\roman*)]
    \item $J_g(\xi^*,\theta^*) \leq J_g(\xi,\theta^*)$ for any admissible control $\xi \in \cB$, and
    \item $\theta^*_t = \E[X^*_t]$ for any $t \in [0,T]$, where $X^*$ denotes the solution of \eqref{eq:application:dynamics} associated to the optimal control $\xi^*$.
\end{enumerate}
\end{definition}
By Theorem \ref{thm:main}, any solution to the potential MFG \eqref{eq:application:mfg_cost} is also a solution to the MFC problem \eqref{eq:application:mfc_payoff}.
Therefore, we now solve the potential MFG. 

\smallskip
Define the set 
\begin{equation*}
\Theta\coloneqq\Big\{ \theta: [0,T] \to \R \textit{ measurable, such that } t\mapsto \theta_t \textit{ is càdlàg,} \textit{non-increasing and bounded}  \Big\} 
\end{equation*}
Without loss of generality, we can restrict to the case where $\theta \in \Theta$. Indeed, consider $(\xi^*,\theta^*)$ as a solution to the potential MFG.
At the equilibrium, the process $\theta^*$ is $\theta^*_t = x - \E[\xi^*_t]$ for $t \in [0,T]$, which is càdlàg and non-increasing, since the optimal control $\xi^*$ is càdlàg and non-decreasing. In addition, the process $\theta^*$ is bounded in $t$. Indeed, for every $t \in [0,T]$, we have $x \geq \theta^*_t = x - \E[\xi^*_t] \geq x - \E[\xi^*_T]$, since $0 \leq \E[\xi^*_t] \leq \E[\xi^*_T] < \infty$.

\subsection{Step 1: Solving the Singular Stochastic Control Problem}\label{sec:ctrl_pb}

In this subsection we prove that, for any fixed $\theta \in \Theta$, there exists an optimal control $\xi^* \in \cB$ which minimizes $J_g(\cdot,\theta)$.
This is the content of Theorem \ref{application:teo:optimal_xi}.
To this extent, we exploit the well-known connection between singular control problems and optimal stopping problems, adapting the approach developed in \cite{baldursson1996irreversible} (see also \cite{ferrarideangleissasa}).
\smallskip

Let $(t,x)\in [0, T] \times \R$. Set $\cT_t \coloneqq \{\tau \in [0, T-t] \ \bbF\text{-stopping times} \}$, and consider the optimal stopping problem
\begin{equation} \label{optstop:eq:optimalstopping}
    v(t,x) \coloneqq \inf_{\tau \in \cT_t} \E \left[ \int_0^\tau e^{-\rho s} \big(X^{x}_s - \alpha (2 - \alpha) \theta_{s +t} \big) ds + Ke^{-\rho \tau } \right],
\end{equation}
where $X^{x}_t = x + \sigma W_t$ is the uncontrolled state process that starts from $x \in \R$ at $0$.
We start by proving the following simple properties of the value function $v$:
\begin{lemma} \label{optstop:lemma:v(t,x)}
\begin{enumerate}[label=(\roman*)]
    \item $v(t,x) \leq K$ for all $(t,x) \in [0, T] \times \R$. 
    \item The map $[0, T] \times \R \ni (t,x) \mapsto v(t,x) \in \R$ is continuous.
    \item For fixed $t \in [0, T]$, the map $\R \ni x \mapsto v(t,x) \in \R$ is non-decreasing.
\end{enumerate}
\end{lemma}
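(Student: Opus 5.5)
Part (i) follows from admissibility of the trivial stopping time. Since $\tau \equiv 0$ belongs to $\cT_t$, the integral term vanishes and the cost equals $K e^{0} = K$. Taking the infimum over $\cT_t$ gives $v(t,x) \le K$.

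Part (iii) follows by comparing pathwise. Fix $t$, $x \le x'$ and a stopping time $\tau \in \cT_t$. We have $X^{x}_s = x + \sigma W_s \le x' + \sigma W_s = X^{x'}_s$ for all $s$. The integrand $e^{-\rho s}\big(X^x_s - \alpha(2-\alpha)\theta_{s+t}\big)$ is therefore pointwise no larger for $x$ than for $x'$, and the terminal term $Ke^{-\rho\tau}$ does not depend on $x$. Hence the expected cost for $x$ is at most that for $x'$, for the same $\tau$. Taking the infimum over $\tau$, which is the same set $\cT_t$ for both points, yields $v(t,x) \le v(t,x')$. Finiteness of all quantities is guaranteed because $\theta \in \Theta$ is bounded and $\E[\sup_{s\le T}|W_s|]<\infty$. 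The same estimate also gives a lower bound: $v$ is finite and bounded below by $-C(1+|x|)$.

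For part (ii), I would prove joint continuity in two pieces.

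\emph{Continuity in $x$ (Lipschitz, uniformly in $t$).} For any fixed $\tau$, the difference of the costs at $x$ and $x'$ is bounded by
\[
\E\Big[\int_0^\tau e^{-\rho s}|x-x'|\,ds\Big] \le T|x-x'|.
\]
Since $|\inf_\tau a_\tau - \inf_\tau b_\tau| \le \sup_\tau |a_\tau - b_\tau|$, we obtain $|v(t,x)-v(t,x')| \le T|x-x'|$.

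\emph{Continuity in $t$.} This is the main obstacle, for two reasons: the horizon $T-t$ of $\cT_t$ changes with $t$, and $\theta_{s+t}$ is only càdlàg, not continuous. It therefore suffices, given the Lipschitz bound in $x$, to show $v(t_n,x)\to v(t,x)$ for each fixed $x$ and every sequence $t_n \to t$. I plan to handle the two sources of $t$-dependence as follows.

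\emph{Horizon.} For $t<t'$, any $\tau\in\cT_t$ can be mapped to $\tau\wedge(T-t')\in\cT_{t'}$. The resulting change in cost is controlled by
\[
\E\Big[\int_{(T-t')\wedge\tau}^{\tau} |\cdot|\,ds\Big] + K\,\E\big[e^{-\rho(\tau\wedge(T-t'))}-e^{-\rho\tau}\big] \le C|t'-t|,
\]
using the integrability of $\sup_{s\le T}|X^x_s|$ and the boundedness of $\theta$. Conversely, $\cT_{t'}\subseteq\cT_t$.

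\emph{Shift of $\theta$.} The term
\[
\E\Big[\int_0^\tau e^{-\rho s}\big(\theta_{s+t}-\theta_{s+t'}\big)\,ds\Big]
\]
is bounded uniformly in $\tau$ by $\int_0^{T}|\theta_{s+t}-\theta_{s+t'}|\,ds$, where $\theta$ is extended constantly beyond $T$. This tends to $0$ as $t'\to t$ by continuity of translations in $L^1$, since $\theta$ is bounded and measurable; its monotonicity makes this immediate, as the integral is at most a constant times $|t-t'|$ times the total variation.

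Combining the horizon and shift estimates via the same $\sup_\tau$ argument gives $|v(t,x)-v(t',x)| \le C|t-t'|$, and hence joint continuity of $v$ on $[0,T]\times\R$.
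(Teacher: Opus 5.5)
Your proof is correct. Parts (i) and (iii) are the paper's arguments; for (ii) you take a more quantitative route.

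The paper fixes $(t_n,x_n)\to(t,x)$ and inserts $\varepsilon$-optimal stopping times of one problem into the other. It then concludes by dominated convergence, using only that $\theta$ has countably many jumps, so $\theta_{s+t_n}\to\theta_{s+t}$ for a.e.\ $s$.

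You instead bound $|\inf_\tau a_\tau-\inf_\tau b_\tau|$ by $\sup_\tau|a_\tau-b_\tau|$, which gives a Lipschitz bound in $x$ uniform in $t$. You then get a Lipschitz bound in $t$, with constant of order $1+|x|$, from three ingredients: the truncation $\tau\mapsto\tau\wedge(T-t')$, the inclusion $\cT_{t'}\subseteq\cT_t$, and the monotonicity of $\theta$. This yields local Lipschitz continuity of $v$, which is more than the lemma claims.

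Your truncation step also makes explicit something the paper's argument passes over. An $\varepsilon$-optimal time for $v(t,x)$ takes values in $[0,T-t]$, so it is admissible for $v(t_n,x_n)$ only when $t_n\le t$; the same issue arises symmetrically for the $\varepsilon$-optimal times of $v(t_n,x_n)$. The changing horizon therefore does require the small extra estimate $\tau-\tau\wedge(T-t')\le t'-t$ that you supply.

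The paper's qualitative version needs no rate, only a.e.\ convergence of the shifted $\theta$. Your remark on $L^1$-continuity of translations shows that your argument also covers any bounded measurable $\theta$, if one gives up the rate.
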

\begin{proof}
\emph{(i).} The upper bound follows by taking $\tau = 0$ in \eqref{optstop:eq:optimalstopping}.

\emph{(ii).} Let $(t_n, x_n)_{n \in \N} \subset [0,T] \times \R$ be a sequence converging to $(t,x) \in [0,T] \times \R$. Take $\varepsilon > 0$ and let $\tau^\varepsilon \coloneqq \tau^\varepsilon(t,x)$ be an $\varepsilon$-optimal stopping time for the optimal stopping problem with value function $v(t,x)$. Then, we have
\[
v(t_n,x_n) - v(t,x) \leq \varepsilon + \E \left[ \int_0^{\tau^\varepsilon} e^{-\rho s} \big(x_n - x - \alpha (2 - \alpha) (\theta_{s + t_n} - \theta_{s + t}) \big) ds\right].
\]
We recall that, since $\theta \in \Theta$, it is càdlàg, non-increasing and bounded. Thus, it has at most countably many discontinuities. Therefore, for almost every $s \in [0, T-t]$ we have $\theta_{s + t_n} \to \theta_{s + t}$ as $t_n \to t$. Thus, we can apply dominated convergence to the right-hand side of the inequality above and get
\begin{equation} \label{eqn:optstop:v_cont_proof_eqn1}
    \limsup_{n \to \infty} v(t_n, x_n)\leq v(t,x) + \varepsilon.
\end{equation}
Similarly, taking $\varepsilon$-optimal stopping times $\tau^\varepsilon_n \coloneqq \tau^\varepsilon(t_n,x_n)$ for the optimal stopping problem with value function $v(t_n,x_n)$, we get
\begin{multline*}
    v(t,x) - v(t_n,x_n) \leq \varepsilon + \E \left[ \int_0^{\tau^\varepsilon_n} e^{-\rho s} \big(x - x_n - \alpha (2 - \alpha) (\theta_{s + t} - \theta_{s + t_n}) \big) ds \right] \\
    \leq \varepsilon + \int_0^{T} e^{-\rho s} \big( \vert x - x_n \vert + \vert \alpha (2 - \alpha)\vert  \vert  \theta_{s + t} - \theta_{s + t_n} \vert \big) ds
\end{multline*}
Arguing as before, we can again apply dominated convergence to the right-hand side of the inequality above and get
\begin{equation} \label{eqn:optstop:v_cont_proof_eqn2}
    \liminf_{n \to \infty} v(t_n, x_n)\geq v(t,x) - \varepsilon.
\end{equation}
Equations \eqref{eqn:optstop:v_cont_proof_eqn1} and \eqref{eqn:optstop:v_cont_proof_eqn2} imply the continuity of $v$ on $[0, T] \times \R$ by arbitrariness of $\varepsilon > 0$.

\emph{(iii).} Since the term $X^x_s = x + \sigma W_s$ appears linearly inside the integral in \eqref{optstop:eq:optimalstopping}, the map $x \mapsto v(t,x) $ is clearly non-decreasing.
\end{proof}

Let $\cC$ and $\cS$ be the continuation and stopping regions for the optimal stopping problem:
\begin{equation} \label{optstop:eq:CandSregions}
    \cC \coloneqq \big\{ (t,x) \in [0,T]\times \R: \ v(t,x)<K \big\}, \quad \quad \cS \coloneqq \big\{ (t,x) \in [0,T]\times \R: \ v(t,x)=K \big\}.
\end{equation}

\begin{lemma} \label{optstop:lemma:subharmonic}
Fix $(t,x) \in [0, T ] \times \R$. The process
\[
V \coloneqq \left( e^{- \rho u} v\big(t + u, X^x_{u}\big) + \int_0^{u} e^{-\rho s} \big(X^{x}_s - \alpha (2 - \alpha) \theta_{s + t} \big) ds\right)_{u \in [0, T - t]}
\]
is an $\bbF$-submartingale and it holds
\begin{equation} \label{optsopt:eq:v_subharmonic}
    v(t,x) \leq \E \left[ e^{- \rho \tau} v\big(t + \tau, X^x_{ \tau}\big) + \int_0^{\tau} e^{-\rho s} \big(X^{x}_s - \alpha (2 - \alpha) \theta_{s +t} \big) ds \right], \quad \forall \tau \in \cT_t .
\end{equation}
Moreover, the stopping time
\begin{equation} \label{optstop:eq:peskirtau}
    \tau^* = \tau^*(t,x) \coloneqq \inf \big\{ s \in [0, T-t]: \ \big(t+s, X^x_s) \in \cS \big\} \wedge (T - t), 
\end{equation}
is optimal for problem \eqref{optstop:eq:optimalstopping} and the process $(V_{u \wedge \tau^*})_{u \in [0, T - t]}$ is an $\bbF$-martingale.
\end{lemma}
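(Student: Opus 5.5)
This lemma is the standard optimal stopping verification for a finite-horizon, time-inhomogeneous problem with continuous value function (Lemma \ref{optstop:lemma:v(t,x)}(ii)). I would derive everything from the dynamic programming principle. Two inputs are needed. First, the payoff is integrable: the running cost is bounded by $\vert x\vert + \sigma\sup_{s\le T}\vert W_s\vert + \vert\alpha(2-\alpha)\vert\sup\vert\theta\vert$, which has all moments since $\theta \in \Theta$ is bounded. Second, the time-shifted problem has the same structure, because $X^x_{u+\cdot} = X^x_u + \sigma(W_{u+\cdot}-W_u)$ and $(W_{u+\cdot}-W_u)$ is an independent Brownian motion. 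The plan is to embed the problem in the classical framework, e.g. \cite[Ch.~I, Thm.~2.2 and 2.4]{peskir} or El Karoui's Snell envelope theory, rather than reprove it by hand.

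\textbf{Step 1: Snell envelope identification.} For fixed $(t,x)$ and $u \in [0,T-t]$, define the gain process
\[
Z_u \coloneqq \int_0^{u} e^{-\rho s}\big(X^x_s - \alpha(2-\alpha)\theta_{s+t}\big)ds + K e^{-\rho u}.
\]
It is continuous, adapted, and satisfies $\E[\sup_u \vert Z_u\vert]<\infty$. Let $S$ be the lower Snell envelope, $S_u = \operatorname{ess\,inf}_{\tau \in [u,T-t]} \E[Z_\tau \mid \cF_u]$. Conditioning on $\cF_u$ and using the Markov property of the Brownian motion together with the shift $s \mapsto s+u$, one gets
\[
S_u = \int_0^u e^{-\rho s}\big(X^x_s - \alpha(2-\alpha)\theta_{s+t}\big)ds + e^{-\rho u} v(t+u, X^x_u) = V_u .
\]
Here the factor $e^{-\rho u}$ comes out of both the integral and the $K$ term, and $\theta_{s+t}$ becomes $\theta_{(s-u)+(t+u)}$, which matches the definition of $v(t+u,\cdot)$. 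This step needs a measurable selection or a regular conditional distribution argument. The cleanest route is to use continuity of $v$ to approximate $\tau$ by stopping times taking finitely many values, or to work on the canonical space.

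\textbf{Step 2: submartingale property and \eqref{optsopt:eq:v_subharmonic}.} The Snell envelope of an infimum problem is a submartingale, being the largest submartingale dominated by $Z$. Hence $V$ is a submartingale. Optional sampling for the continuous process $V$, which is continuous because $v$ is continuous, on the bounded interval, with integrability from Step 1, then gives $V_0 = v(t,x) \le \E[V_\tau]$ for every $\tau \in \cT_t$. This is exactly \eqref{optsopt:eq:v_subharmonic}.

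\textbf{Step 3: optimality of $\tau^*$ and the martingale property.} Since $v \le K$ by Lemma \ref{optstop:lemma:v(t,x)}(i), we have $V \le Z$, with equality exactly when $(t+u,X^x_u)\in\cS$. Because $v$ is continuous, $\cS$ is closed, so $\tau^*$ is a well-defined stopping time (a hitting time of a closed set by a continuous process). Moreover $V_{\tau^*} = Z_{\tau^*}$: either the process hits $\cS$, or $\tau^* = T-t$, where $v(T,\cdot)=K$ gives equality anyway. The general theory for continuous gain processes, using continuity and integrability of $Z$, states that $D_u = \inf\{s\ge u: S_s = Z_s\}$ is optimal and that $S_{\cdot\wedge D_0}$ is a martingale; here $D_0=\tau^*$. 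If I had to argue directly, I would take the $\varepsilon$-optimal times $\tau_\varepsilon = \inf\{s: V_s \ge Z_s - \varepsilon\}$, show $V_{\cdot\wedge\tau_\varepsilon}$ is a martingale by the usual penalization argument, and let $\varepsilon\downarrow0$ using $\tau_\varepsilon \uparrow \tau^*$ and uniform integrability. Then $v(t,x)=\E[Z_{\tau^*}]$.

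The main obstacle is Step 1: identifying the conditional essential infimum with $v$ evaluated along the path, given the time-inhomogeneity coming from $\theta$. Since $\theta$ is deterministic and the underlying process is a Brownian motion, the standard strong Markov argument applies. The continuity proved in Lemma \ref{optstop:lemma:v(t,x)}(ii) is what makes the measurability issues disappear.
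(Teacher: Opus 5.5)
Your proposal is correct and takes essentially the same route as the paper. The paper simply invokes \cite[Theorem 2.4 and Corollary 2.9]{peskir2006optimal} for the time-space process $(t+u, X^x_u)$, using the (upper semi)continuity of $v$ from Lemma \ref{optstop:lemma:v(t,x)}(ii); you make the same reduction explicit through the Snell envelope and its identification with $V$ via the Markov property, which is exactly how those cited results are proved.
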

\begin{proof}
The submartingale property of the process $V$ is straightforward from $(ii)$ of Lemma \ref{optstop:lemma:v(t,x)}, in particular from the upper-semicontinuity of $v(t, x)$, and from \cite[Theorem 2.4]{peskir2006optimal}.
Therefore, \eqref{optsopt:eq:v_subharmonic} is true for any $\tau \in \cT_t$.
Exploiting again the upper-semicontinuity of $v$ and \cite[Corollary 2.9]{peskir2006optimal}, we have that $\tau^*$ as in \eqref{optstop:eq:peskirtau} is optimal. Since \eqref{optsopt:eq:v_subharmonic} holds with equality for $\tau^*$ as in \eqref{optstop:eq:peskirtau}, the martingale property of $(V_{u \wedge \tau^*})_{u \in [0, T - t]}$ follows.
\end{proof}

By exploiting the non-decreasing property of the map $x \mapsto v(t,x)$ for fixed $t \in [0,T]$, we can define the free boundary between $\cC$ and $\cS$ by
\begin{equation} \label{optstop:eqn:bounday_b}
    b_t \coloneqq \inf\big\{ x\in \R: \ v(t,x)\geq K\big\},
\end{equation}
with the convention $\inf \emptyset = + \infty$. By employing $(b_t)_{t \in [0,T]}$, \eqref{optstop:eq:CandSregions} can be equivalently written as
\begin{equation} 
    \cC = \big\{ (t,x) \in [0,T]\times \R: \ x < b_t \big\}, \quad \quad \cS \coloneqq \big\{ (t,x) \in [0,T]\times \R: \ x \geq b_t \big\}.
\end{equation}
In addition, we can rewrite the optimal stopping time \eqref{optstop:eq:peskirtau} as
\begin{equation} \label{optstop:eqn:tau_optimal}
\tau^*(t,x) = \inf \big\{ s \in [0, T-t]: \ X^x_s \geq b_{s+t}\big\} \wedge (T - t).
\end{equation}

Fix $(t,x) \in [0, T ] \times \R$. We introduce the non-decreasing process
\begin{equation} \label{optstop:eqn:runngin_sup_xi}
    \xi^*_u \coloneqq \sup_{s \in [0,u]} \left( x + \sigma W_s - b_{s + t}\right)^+, \quad u \in[0,T-t],
\end{equation}
with $(b_t)_{t\in[0,T]}$ as in \eqref{optstop:eqn:bounday_b}.
\begin{lemma} \label{lemma:bound_xi_star_theta}
The process $\xi^*$ of \eqref{optstop:eqn:runngin_sup_xi} is such that
\begin{equation}\label{eq:estimate:xi_star}
    \xi^*_t \leq \sup_{s \in [0,t]} (x + \sigma W_s - K\rho - \alpha(2-\alpha)\theta_s)^+,  \quad t \ \in [0, T].
\end{equation}
\end{lemma}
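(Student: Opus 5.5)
The plan is to reduce \eqref{eq:estimate:xi_star} to a pointwise lower bound on the free boundary. Write $a\coloneqq\alpha(2-\alpha)$ and take the initial time to be $0$, so that $\xi^*_u=\sup_{s\in[0,u]}(x+\sigma W_s-b_s)^+$, which matches the indexing $\theta_s$ in the statement. Since $y\mapsto y^+$ and the running supremum are monotone, it suffices to show
\[
b_s\geq K\rho+a\,\theta_s\qquad\text{for } s\in[0,T).
\]
By the definition \eqref{optstop:eqn:bounday_b} of $b$ and the monotonicity in Lemma \ref{optstop:lemma:v(t,x)}(iii), this in turn follows once we show that every point $(t,x)$ with $t<T$ and $x<K\rho+a\theta_t$ lies in $\cC$, i.e.\ that $v(t,x)<K$.

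To prove $v(t,x)<K$ for such a point, I will exhibit a stopping time that does strictly better than $\tau=0$. Using $Ke^{-\rho\tau}-K=-\int_0^\tau\rho Ke^{-\rho s}\,ds$, for every $\tau\in\cT_t$ we get
\[
\E\left[\int_0^\tau e^{-\rho s}\big(X^x_s-a\theta_{s+t}\big)ds+Ke^{-\rho\tau}\right]-K=\E\left[\int_0^\tau e^{-\rho s}\big(X^x_s-a\theta_{s+t}-\rho K\big)ds\right].
\]
The goal is therefore a $\tau$ with $\P(\tau>0)>0$ along which the integrand stays strictly negative. Set $\delta\coloneqq\frac13\big(K\rho+a\theta_t-x\big)>0$. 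Since $\theta\in\Theta$ is càdlàg, it is right-continuous at $t$, so we can choose $h\in(0,T-t]$ with $|a|\,|\theta_{s+t}-\theta_t|<\delta$ for all $s\in[0,h]$. Define
\[
\tau\coloneqq\inf\{s\geq0:\ X^x_s\geq x+\delta\}\wedge h .
\]
On $[0,\tau)$ the integrand is bounded above by $x+\delta-a\theta_t+\delta-\rho K=-\delta<0$. Moreover $\tau>0$ a.s., by continuity of the Brownian paths and because $h>0$. Hence the expectation above is strictly negative, which gives $v(t,x)<K$.

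The step I expect to be delicate is the terminal time. At $t=T$ only $\tau=0$ is admissible, so $v(T,\cdot)\equiv K$ and \eqref{optstop:eqn:bounday_b} gives $b_T=-\infty$. In that case the term with $s=T$ in \eqref{optstop:eqn:runngin_sup_xi} must be handled separately. I would argue that the bound is only needed for $s<T$: either one uses the natural convention $b_T\coloneqq b_{T-}$ (defining $\xi^*$ through the left limit of the boundary), or one notes that the bound $b_s\geq K\rho+a\theta_s$ for $s<T$ passes to $s\uparrow T$ by right-continuity of $\theta$ and monotonicity. With this settled, the inequality $(x+\sigma W_s-b_s)^+\leq(x+\sigma W_s-K\rho-a\theta_s)^+$ holds for every $s$, and taking the supremum over $s\in[0,t]$ yields \eqref{eq:estimate:xi_star}.
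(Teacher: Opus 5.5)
Your proof is correct and is essentially the paper's argument. Both reduce \eqref{eq:estimate:xi_star} to the bound $b_t\geq K\rho+\alpha(2-\alpha)\theta_t$ by showing that stopping immediately is strictly suboptimal below this level: the paper proves $\cS\subseteq\{x\geq K\rho+\alpha(2-\alpha)\theta_t\}$ by dividing by $u$ and letting $u\downarrow0$, while you exhibit an explicit exit time with $v(t,x)<K$, which also makes explicit the right-continuity of $\theta$ at $t$ and the need for $\P(\tau>0)>0$ that the paper leaves implicit.

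Your remark on $t=T$ is correct and is a point the paper's proof silently skips (at $t=T$ one has $[0,T-t]=\{0\}$, so literally $b_T=-\infty$ and $\xi^*_T=+\infty$). The cleanest repair is the convention $b_T\coloneqq+\infty$, since a jump at $T$ costs $Ke^{-\rho T}\Delta\xi_T>0$ without affecting the running cost. With your convention $b_T\coloneqq b_{T-}$ (when it exists), the pointwise bound at $s=T$ can fail if $\alpha(2-\alpha)<0$ and $\theta$ jumps at $T$; the supremum inequality still holds, because the right-hand side sees $\theta_{T-}$ as $s\uparrow T$, and right-continuity of $\theta$ plays no role in that limit.
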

\begin{proof}
Recall \eqref{optsopt:eq:v_subharmonic}.
Take $(t,x) \in \cS$, consider $u \in [0, T-t]$ and the stopping time $\tau \wedge u < T - t$, for any $\tau \in \cT_t$.
Since $(t,x) \in \cS$ implies $v(t,x)=K$, we rewrite \eqref{optsopt:eq:v_subharmonic} as
\begin{equation*}
\begin{aligned}
    K &\leq \E \left[ e^{- \rho (\tau \wedge u)} v\big(t + \tau \wedge u, x + \sigma W_{  \tau \wedge u }\big) + \int_0^{ \tau \wedge u} e^{-\rho s} \big(x + \sigma W_s - \alpha (2 - \alpha) \theta_{s +t} \big) ds \right] \\
    &\leq \E \left[ Ke^{- \rho (\tau \wedge u)} + \int_0^{ \tau \wedge u} e^{-\rho s} \big(x + \sigma W_s - \alpha (2 - \alpha) \theta_{s +t} \big) ds \right] \\
    &= \E \left[ \int_0^{\tau \wedge u} e^{-\rho s} \big(x + \sigma W_s - K \rho - \alpha (2 - \alpha) \theta_{s +t} \big) ds + K \right],  
\end{aligned}
\end{equation*}
where the second inequality follows from $(i)$ of Lemma \ref{optstop:lemma:v(t,x)}. Thus, we obtain
\[
0 \leq \lim_{u \to 0} \frac{1}{u} \E \left[ \int_0^{\tau \wedge u} e^{-\rho s} \big(x + \sigma W_s - K \rho - \alpha (2 - \alpha) \theta_{s +t} \big) ds\right] = x - K \rho - \alpha (2 - \alpha) \theta_{t}
\]
for any $(t,x) \in \cS$. This implies the inclusion
\begin{equation} \label{eqn:lemma:uperbound_xi_star_proof_eqn2}
    \cS \subseteq \big\{ (t,x) \in [0,T]\times \R: \ x \geq K \rho + \alpha (2 - \alpha) \theta_{t} \big\}.
\end{equation}
Taking the complementary set in \eqref{eqn:lemma:uperbound_xi_star_proof_eqn2} and rewriting the continuation region $\cC$ with respect to the free boundary function $b = (b_t)_{t \in [0, T]}$, we get
\[
\big\{ (t,x) \in [0,T]\times \R: \ x < K \rho + \alpha (2 - \alpha) \theta_{t} \big\} \subseteq  \big\{ (t,x) \in [0,T]\times \R: \ x < b_t \big\},
\]
which implies $b_t \geq  K \rho  + \alpha (2 - \alpha) \theta_{t}$ for $t \in [0, T]$, and concludes the proof.
\end{proof}

\begin{proposition}
The process $\xi^*$ of \eqref{optstop:eqn:runngin_sup_xi} is an admissible control.
\end{proposition}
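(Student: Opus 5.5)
We have to check the four defining properties of $\cB$ for the process $\xi^*$ of \eqref{optstop:eqn:runngin_sup_xi}: adaptedness, monotonicity, càdlàg paths with $\xi^*_{0-}=0$, and square integrability of $\xi^*_{T-t}$. Set $Y_s \coloneqq x + \sigma W_s - b_{s+t}$. Nonnegativity and monotonicity are immediate, since $\xi^*$ is the running supremum of a nonnegative process, and we set $\xi^*_{0-}=0$ by convention. Note that $\xi^*_0=(x-b_t)^+$ may be strictly positive; this is allowed, and corresponds to the initial jump.

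\textbf{Adaptedness and regularity.} First we need $b$ to be regular enough that $Y$ is progressively measurable and $\xi^*$ is càdlàg.
\begin{itemize}
\item Since $v$ is continuous (Lemma \ref{optstop:lemma:v(t,x)}(ii)), the stopping region $\cS=\{v=K\}$ is closed.
\item Hence $t\mapsto b_t=\inf\{x: v(t,x)\ge K\}$ is lower semicontinuous, in particular Borel. It takes values in $(-\infty,+\infty]$ and, by Lemma \ref{lemma:bound_xi_star_theta}, is bounded below by $K\rho+\alpha(2-\alpha)\theta_t$, which is bounded because $\theta\in\Theta$.
\item Because $b$ is l.s.c., $s\mapsto Y_s$ is upper semicontinuous. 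Where $b=+\infty$ we get $Y^+=0$, which is harmless.
\item The running supremum of an u.s.c. function is right-continuous: for $u_n\downarrow u$, $\sup_{[0,u_n]}Y^+ \to \max\big(\sup_{[0,u]}Y^+,\limsup_{s\downarrow u}Y^+_s\big)=\sup_{[0,u]}Y^+$. Left limits exist by monotonicity, so $\xi^*$ is càdlàg.
\item For adaptedness, $\xi^*_u$ is the supremum over $s\in[0,u]$ of the $\cF_u\otimes\mathcal B$-measurable map $(\omega,s)\mapsto Y_s^+$. Measurability of this supremum follows from the measurable projection theorem, using completeness of $\bbF$. Alternatively, upper semicontinuity lets us realise the supremum along a countable set once we note $Y^+=\lim$ of continuous approximations from above. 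I would cite the usual-conditions/projection argument rather than grind through it.
\end{itemize}

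\textbf{Square integrability.} This is the substantive step, and it is where Lemma \ref{lemma:bound_xi_star_theta} is used. Since $\theta$ is bounded, say $|\theta|\le M$, estimate \eqref{eq:estimate:xi_star} gives
\[
\xi^*_{T-t} \le \sup_{s\in[0,T]}\big(|x|+K\rho+|\alpha(2-\alpha)|M+\sigma|W_s|\big).
\]
Then $\E[|\xi^*_{T-t}|^2]\le 2\big(|x|+K\rho+|\alpha(2-\alpha)|M\big)^2+2\sigma^2\E[\sup_{s\le T}|W_s|^2]$. The last term is finite by Doob's $L^2$ inequality, being bounded by $8\sigma^2T$.

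\textbf{Main obstacle.} I expect the regularity of $b$ (measurability and semicontinuity) needed for the càdlàg and adaptedness claims to be the only delicate point. The route via closedness of $\cS$ from the continuity of $v$ should settle it.
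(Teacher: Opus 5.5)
Your proof is correct and follows essentially the same route as the paper: finiteness and square integrability of $\xi^*$ come from the lower bound $b_t\geq K\rho+\alpha(2-\alpha)\theta_t$ obtained in the proof of Lemma~\ref{lemma:bound_xi_star_theta}, and right-continuity comes from the lower semicontinuity of $b$ together with the same running-supremum computation (your closedness of $\cS$ is the paper's openness of $\cC$, i.e.\ upper semicontinuity of $t\mapsto x-b_t$). Your departures are only refinements: you note that $b=+\infty$ is harmless, so the paper's separate (and loosely argued) step showing $b_t<\infty$ is not needed, and you justify adaptedness via measurable projection under the usual conditions, whereas the paper simply appeals to $b$ being deterministic.
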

\begin{proof}
First of all, we prove that the free boundary function $b_t$ is finite for any $t \in [0, T]$, and therefore, $\xi^*$ is a.s. finite. Indeed, arguing by contradiction, assume that there exists a time $\tilde t \in [0, T]$ such that the free boundary function is not finite. Then, the set $\big\{ x\in \R: \ v(\tilde t,x)\geq K\big\}$ is empty. Therefore we have
\[
K > v(\tilde t,x) = \inf_{\tau \in \cT_{\tilde t}} \E \left[ \int_0^\tau e^{-\rho s} \big(X^{x}_s - \alpha (2 - \alpha) \theta_{s + \tilde t} \big) ds + Ke^{-\rho \tau }\right],
\]
and, since the right-hand side term in the above equation goes to $\infty$ as $x\to \infty$, we get to a contradiction.
Furthermore, $\xi^*$ is $\bbF$-adapted since the boundary function is deterministic. 
The condition $\E \big[(\xi^*_T)^2\big] < \infty$ follows form Lemma \ref{lemma:bound_xi_star_theta} since $\theta$ is bounded. Indeed, we have
\begin{multline*}
    \E\left[\left(\xi^*_T\right)^2\right] \leq \E \left[  \left( \sup_{s \in [0,T]} \big(x + \sigma W_s - K\rho - \alpha(2-\alpha)\theta_s\big)^+ \right)^2\right] \\ \leq \E \left[  \sup_{s \in [0,T]} \Big\vert x + \sigma W_s - K\rho - \alpha(2-\alpha)\theta_s\Big\vert^2\right] \leq 2 \sigma^2 \E \left[  \sup_{s \in [0,T]}  \vert W_s \vert^2 \right] + C < \infty,
\end{multline*}
where $C>0$ is a positive constant.
To prove that $\xi^*$ is admissible, it remains to show that $t \mapsto \xi^*_t$ is right-continuous with left-limits. Clearly, $t \mapsto \xi^*_t$ admits left-limits since it is non-decreasing. To show that $\xi^*$ has right-continuous paths, we first notice that $t \mapsto x-b_{t}$ is upper-semicontinuous. Indeed, one has
\[
\big\{ (t,x) \in [0,T]\times \R: \ v(t,x)<K \big\} = \big\{ (t,x) \in [0,T]\times \R: \ x < b_t \big\}.
\]
The set on the left-hand side above is open since it is the preimage of an open set via the upper-semicontinuous mapping $(t, x)\mapsto v(t,x)$ (cf. $(ii)$ of Lemma \ref{optstop:lemma:v(t,x)}). Hence, the set on the right-hand side is open as well, and thus $(t,x) \mapsto x-b_t$ is upper-semicontinuous. In particular, for fixed $t \in [0, T]$, the map  $s \mapsto x-b_{s + t}$ is upper-semicontinuous. Therefore, since the composition between an upper-semicontinuous and a continuous function is upper-semicontinuous, we have
\[
\limsup_{u\downarrow s} \big( x + \sigma W_u- b_{u + t} \big) \leq x + \sigma W_s -b_{s + t}. 
\]
Moreover, we obtain
\begin{multline} \label{optstop:eqn:xi_admissible_proof}
    \lim_{u\downarrow s} \xi^*_u = \xi^*_s \vee  \lim_{u\downarrow s} \sup_{r \in (s, u]}  \left( x + \sigma W_r - b_{r + t}\right)^+ = \xi^*_s \vee  \limsup_{u\downarrow s}  \left( x + \sigma W_u - b_{u + t}\right)^+ \\ \leq  \xi^*_s \vee \left( x + \sigma W_s - b_{s + t}\right)^+ = \xi^*_s.
\end{multline}
Since $\lim_{u\downarrow s} \xi^*_u \geq \xi^*_s$ by monotonicity of $t \mapsto \xi^*_t$, \eqref{optstop:eqn:xi_admissible_proof} implies right-continuity.
\end{proof}

The main result of this subsection is as follows:
\begin{theorem} \label{application:teo:optimal_xi}
For any fixed $\theta \in \Theta$, the unique solution to the singular control problem associated with cost functional $J_g(\cdot,\theta)$ is given by
\[
\xi^*_t = \sup_{s \in [0,t]} \left(x + \sigma W_s -b_s \right)^+, \quad t \in [0, T].
\]
\end{theorem}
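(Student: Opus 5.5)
Uniqueness comes first and is quick: the map $\xi \mapsto J_g(\xi,\theta)$ is strictly convex on the convex set $\cB$. The state $X^\xi_t = x+\sigma W_t - \xi_t$ is affine in $\xi$, and the running cost $\tfrac12 e^{-\rho t}(X-\alpha\theta_t)^2 - e^{-\rho t}\alpha(1-\alpha)\theta_t X$ is strictly convex in $X$. So if $\xi^1 \neq \xi^2$ were both optimal, then $\xi^1_t\neq \xi^2_t$ on a set of positive $dt\otimes d\P$ measure (both are càdlàg), and the midpoint $\tfrac12(\xi^1+\xi^2)$ would strictly lower the cost. The substantive part is therefore optimality of $\xi^*$ (here with $t=0$ in \eqref{optstop:eqn:runngin_sup_xi}).

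For optimality I would follow the Baldursson--Karatzas approach \cite{baldursson1996irreversible}, which identifies $v$ as the marginal cost $\partial_x$ of the singular control value. Write the $x$-derivative of the running cost as $e^{-\rho t}\big(X_t-\alpha(2-\alpha)\theta_t\big)$. The key step is a first-order convexity inequality. For any $\xi\in\cB$ and with $X^*=X^{\xi^*}$, convexity of the running cost in $X$ gives
\[
J_g(\xi,\theta)-J_g(\xi^*,\theta)\ \ge\ \E\Big[\int_0^T e^{-\rho t}\big(X^*_t-\alpha(2-\alpha)\theta_t\big)(\xi^*_t-\xi_t)\,dt+\int_{[0,T]}Ke^{-\rho t}(d\xi_t-d\xi^*_t)\Big].
\]
Write $\xi^*_t-\xi_t=\int_{[0,t]}(d\xi^*_s-d\xi_s)$ and apply Fubini. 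The right-hand side then equals
\[
\E\Big[\int_{[0,T]}\big(K e^{-\rho s}-\Phi_s\big)\,(d\xi_s-d\xi^*_s)\Big],\qquad \Phi_s:=\E\Big[\int_s^T e^{-\rho u}\big(X^*_u-\alpha(2-\alpha)\theta_u\big)du\,\Big|\,\cF_s\Big].
\]
The conditional expectation can be inserted by optional projection, since $\xi,\xi^*$ are adapted and increasing. It therefore suffices to prove two facts. (a) $\Phi_s\le Ke^{-\rho s}$ for all $s$, a.s., which makes the $d\xi$ term nonnegative. (b) $\Phi_s=Ke^{-\rho s}$ $d\xi^*$-a.e., which kills the $d\xi^*$ term.

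To prove (a) and (b) I would connect $\Phi$ to $v$. For fixed $s$ and $\tau\ge s$, the process $X^*$ after $s$ behaves like the uncontrolled process started from $X^*_s$, minus further increments. The plan is the standard identity
\[
\Phi_s=\operatorname*{ess\,sup}_{\tau\ge s}\E\Big[\int_s^{\tau} e^{-\rho u}\big(X^*_u-\alpha(2-\alpha)\theta_u\big)du+Ke^{-\rho\tau}\ind_{\{\tau<T\}}\,\Big|\,\cF_s\Big]
\]
or a variant thereof. More concretely, I would show two things. First, $\E[\int_s^{\sigma}\cdots\,|\cF_s]+Ke^{-\rho\sigma}\le Ke^{-\rho s}$ for all stopping times, which uses $X^*\le X^{x}$ pathwise together with Lemma \ref{optstop:lemma:v(t,x)}(i) and the submartingale property of Lemma \ref{optstop:lemma:subharmonic} applied at the point $(s,X^*_s)$. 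Second, equality holds at times of increase of $\xi^*$. This is because $\xi^*$ increases only when $X^*_s=b_s$, that is on $\cS$, where $v=K$. There the optimal stopping time of Lemma \ref{optstop:lemma:subharmonic} gives equality, and the martingale part of that lemma transfers equality to the controlled path.

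The main obstacle is this last identification. One has to show rigorously that $\Phi_s$, the conditional marginal cost along the reflected path, is dominated by $Ke^{-\rho s}$, with equality on the support of $d\xi^*$. This is essentially the Baldursson--Karatzas/Bank--El Karoui representation $\xi^*_t=\sup_{s\le t}(x+\sigma W_s-b_s)^+$ of the solution to a backward equation. Two further issues need care: the time inhomogeneity through $\theta$ and the jump of $\xi^*$ at $0$ and possibly at $T$. I would first attempt the direct route: compare $X^*$ with the uncontrolled process via the shift $\xi^*_u-\xi^*_s$, and use monotonicity of $v$ in $x$ (Lemma \ref{optstop:lemma:v(t,x)}(iii)) together with optimality of $\tau^*$ at points of $\cS$. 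The integrability needed for Fubini comes from $\E[(\xi^*_T)^2]<\infty$ and boundedness of $\theta$.
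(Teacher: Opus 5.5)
Your uniqueness argument matches the paper's. The reduction of optimality to (a)--(b), via the convexity inequality, Fubini and optional projection, is also correct. It would be a genuinely different route: the paper never computes a marginal cost along $X^*$. Instead it integrates the submartingale inequality \eqref{optsopt:eq:v_subharmonic} over initial points $y\le x$ at the stopping times $\tau^\xi(x-y)$ of \eqref{optstop:eqn:proof_theoB_eqn0}. The change of variables turns $\int_{-\infty}^x(\cdot)\,dy$ into $J_g(\xi,\theta)$ minus the uncontrolled cost, giving $U(0,x)\le J_g(\xi,\theta)$, with equality for $\xi^*$ because $\tau^{\xi^*}(z)=\tau^*(0,x-z)$.

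\textbf{The gap.} In your route, (a)--(b) carry the entire content of the theorem: for this convex problem they are necessary as well as sufficient for optimality. The argument you outline for them does not work.
\begin{enumerate}
\item Your ess sup identity is false as written. Taking $\tau=s<T$ gives $\Phi_s\ge Ke^{-\rho s}$, the opposite of (a). The correct variant is an ess inf, but proving it already requires $\Phi_\tau=Ke^{-\rho\tau}$ whenever $X^*_\tau=b_\tau$, which is (b).
\item The per-stopping-time bound you plan to prove is also false. Since $Ke^{-\rho s}-Ke^{-\rho\sigma}=\int_s^\sigma\rho Ke^{-\rho u}du$, it is equivalent to
\[
\E\Big[\int_s^\sigma e^{-\rho u}\big(X^*_u-\alpha(2-\alpha)\theta_u-\rho K\big)\,du\,\Big|\,\cF_s\Big]\le 0 .
\]
This fails for $X^*_s=b_s$ and $\sigma=s+\varepsilon$ with $\varepsilon$ small as soon as $b_s>\alpha(2-\alpha)\theta_s+\rho K$. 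Lemma \ref{lemma:bound_xi_star_theta} only gives $\ge$ here, and strict inequality is the typical case.
\item The submartingale property of Lemma \ref{optstop:lemma:subharmonic} bounds $\E[\int_s^\sigma\cdots|\cF_s]+\E[Ke^{-\rho\sigma}|\cF_s]$ from below, not from above. It also holds only along the uncontrolled process.
\item For (b): at $(s,b_s)\in\cS$ one has $\tau^*=0$, so optimality of $\tau^*$ returns only $v(s,b_s)=K$. The lemma says nothing about the reflected path after it reaches $b$, and that path is exactly what $\Phi_s$ depends on.
\end{enumerate}

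\textbf{What is missing} is the identification $\Phi_s=e^{-\rho s}v(s,X^*_s)$. Given it, (a) follows from $v\le K$, and (b) follows from $v=K$ on $\cS$ together with the fact that $d\xi^*$ is carried by $\{X^*=b\}$.

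One way to obtain it: the marginal running cost is affine in the state with slope one, and for $u\ge s$
\[
\xi^*_u-\xi^*_s=\sup_{r\in[s,u]}\big(X^*_s+\sigma(W_r-W_s)-b_r\big)^+ .
\]
Hence
\[
\Phi_s=\E\Big[\int_s^T e^{-\rho u}\big(X^*_s+\sigma(W_u-W_s)-\alpha(2-\alpha)\theta_u\big)du\,\Big|\,\cF_s\Big]-\int_0^\infty\E\Big[\int_s^T e^{-\rho u}\ind_{\{u>\tau_z\}}\,du\,\Big|\,\cF_s\Big]dz,
\]
where $\tau_z$ is, up to null sets, the first time after $s$ at which the uncontrolled path started from $X^*_s-z$ reaches $b$. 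This is the same level-set/stopping-time correspondence as $\tau^{\xi^*}(z)=\tau^*(t,x-z)$ in the paper.

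You then still need $\partial_y v(s,y)=\E\big[\int_0^{\tau^*(s,y)}e^{-\rho u}du\big]$, and you must match both sides as $y\to-\infty$. Both that limit and the requirement $\Phi_T=e^{-\rho T}v(T,X^*_T)$ (with $\Phi_T=0$) need stopping at the horizon to cost nothing, as in your $\ind_{\{\tau<T\}}$. Charging $Ke^{-\rho\tau}$ also at $\tau=T-t$, as in \eqref{optstop:eq:optimalstopping}, gives $v(T,\cdot)\equiv K$. Without this identification step, your proposal reformulates optimality rather than proving it.
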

\begin{proof}
To see uniqueness, it is enough to notice that the cost functional $\xi \mapsto J_g(\xi,\theta)$ is strictly convex for any fixed $\theta \in \Theta$.
Indeed, take $\lambda \in [0,1]$, $\xi^1$ and $\xi^2$ in $\cB$ and set $\xi^\lambda=\lambda\xi^1 + (1-\lambda)\xi^2$.
Then, it holds $X^{\xi^\lambda}_t = \lambda X^{\xi^1}_t + (1-\lambda)X^{\xi^2}_t$ for any $t \in [0,T]$ $\P$-a.s.
Combining this observation with the strict convexity of the square function and the linearity of the integral with respect to the control variable, it holds $J_g(\xi^\lambda,\theta) < \lambda J_g(\xi^1,\theta) + (1-\lambda) J_g(\xi^2,\theta)$.

We now deal with the optimality. We borrow arguments from \cite{baldursson1996irreversible} (see also \cite{ferrarideangleissasa}). Fix $(t,x) \in [0,T]\times \R$ and recall $v(t,x)$ as defined in \eqref{optstop:eq:optimalstopping}. Define the functions
\begin{equation} \label{optstop:eq:functionsPhiUphi}
\begin{aligned} 
     &\Phi(t,x) \coloneqq \E \left[ \int_0^{T-t} e^{-\rho s} \left( \frac{1}{2}\big(X^{x}_s - \alpha \theta_{s +t} \big)^2 - \alpha (1 - \alpha ) \theta_{s +t} X^{x}_s\right)ds  \right], \\
     &\varphi(t,x) \coloneqq \partial_x \Phi(t,x) = \E \left[ \int_0^{T-t} e^{-\rho s} \big(X^{x}_s - \alpha (2 - \alpha) \theta_{s +t} \big) ds \right], \\
     &U(t,x) \coloneqq \Phi(t,x) + \int_{-\infty}^{x} \left( v(t,y) - \varphi(t,y)\right)dy.
\end{aligned}
\end{equation}
We show that $U(0,x)$ is the value function for the singular control problem associated with the cost functional \eqref{eq:application:mfg_cost}, for fixed $\theta \in \Theta$, and $\xi^*$ in \eqref{optstop:eqn:runngin_sup_xi} is the optimal control.
Take an admissible control $(\xi_t)_{t\in [0, T]} \in \cB$ and define its right-continuous inverse (cf. \cite[Chapter 0, Section 4]{revuz2013continuous}) as
\begin{equation} \label{optstop:eqn:proof_theoB_eqn0}
    \tau^\xi(z) \coloneqq \inf \{ s \in [0, T-t]: \xi_s > z \} \wedge (T - t), \quad z\geq0.
\end{equation}
The process $\tau^\xi:= \{\tau^\xi(z), z \geq 0 \}$ has increasing, right-continuous sample paths and hence it admits left-limits
\begin{equation} \label{optstop:eqn:proof_theoB_eqn00}
    \tau_-^\xi(z) \coloneqq \inf \{ s \in [0, T-t]: \xi_s \geq z \} \wedge (T - t), \quad z \geq 0. 
\end{equation}
The set of points $z \in \R^+$ at which $\tau^\xi(z)(\omega)\neq \tau_-^\xi(z)(\omega)$ is countable $\P$-a.s.
Since $\xi$ is right-continuous and $\tau^\xi(z)$ is the first entry time of an open set, it is an $\bbF$-stopping time for any given $z \geq 0$.
Moreover, $\tau_-^\xi(z)$ is the first entry time of the right-continuous process $\xi$ into a closed set, and hence it is an $\bbF$-stopping time as well for any given $z\geq0$. By Lemma \ref{optstop:lemma:subharmonic}, we have
\begin{equation} \label{optstop:eqn:proof_theoB_eqn000}
    v(t,x) \leq \E \left[ e^{- \rho \tau^\xi(z)} v\left(t + \tau^\xi(z), X^x_{ \tau^\xi(z)}\right) + \int_0^{\tau^\xi(z)} e^{-\rho s} \big(X^{x}_s - \alpha (2 - \alpha) \theta_{s +t} \big) ds \right], 
\end{equation}
for any $z \geq 0$ and $(t,x) \in [0, T] \times \R$. Then, for any $(t,x) \in [0, T] \times \R$, taking $z = x - y$, $x \geq y$, by \eqref{optstop:eq:functionsPhiUphi} we have
\begin{equation} \label{optstop:eqn:proof_theoB_eqn1}
\begin{aligned}
    & U(t,x) - \Phi(t,x) \leq \int_{-\infty}^{x} \E \left[ e^{- \rho \tau^\xi(x-y)} v\left(t +  \tau^\xi(x-y), X^y_{ \tau^\xi(x-y)}\right) \right] dy \\
    & \quad \  + \int_{-\infty}^{x} \E \left[ \int_0^{\tau^\xi(x-y)} e^{-\rho s} \big(X^{y}_s - \alpha (2 - \alpha) \theta_{s +t} \big) ds   \right] dy \\
    & \quad \ - \int_{-\infty}^{x}\E \left[\int_0^{T-t} e^{-\rho s} \big(X^{y}_s - \alpha (2 - \alpha) \theta_{s +t} \big) ds \right]dy \\
    &\leq \int_{-\infty}^{x} \E \left[ K e^{- \rho \tau^\xi(x-y)}\right] dy  - \int_{-\infty}^{x}\E \left[ \int_{\tau^\xi(x-y)}^{T-t} e^{-\rho s} \big(X^{y}_s - \alpha (2 - \alpha) \theta_{s +t} \big) ds \right] dy \\
    &= \int_{-\infty}^{x} \E \left[ K e^{- \rho \tau^\xi(x-y)} - \int_{0}^{T-t} \ind_{\{s > \tau^\xi(x-y)\}} e^{-\rho s} \big(X^{y}_s - \alpha (2 - \alpha) \theta_{s +t} \big) ds \right] dy,
\end{aligned}
\end{equation}
where we have used $(i)$ of Lemma \ref{optstop:lemma:v(t,x)} in the second inequality. 
Moreover, $s > \tau^\xi(x-y)$ if and only if $y > x -\xi_s$, $s \geq 0$, and therefore, from \eqref{optstop:eqn:proof_theoB_eqn1} and from the change of variable formula of \cite[Proposition 4.9, Chapter 0, p.8]{revuz2013continuous} (see also \cite[Equation (4.7)]{baldursson1996irreversible}), we get
\begin{equation} \label{optstop:eqn:proof_theoB_eqn4}
\begin{aligned}
    U&(t,x) - \Phi(t,x) \\
    &\leq \E \left[ K\int_{0}^{T-t} e^{- \rho s} d\xi_s - \int_{0}^{T-t} e^{-\rho s} \left( \int_{x -\xi_s}^{x} \big(X^{y}_s - \alpha (2 - \alpha) \theta_{s +t} \big) dy \right) ds\right]\\
    &= \E \left[ K \int_{0}^{T-t} e^{- \rho s} d\xi_s + \int_0^{T-t} e^{-\rho s} \left( \frac{1}{2}\big(X^{\xi}_s - \alpha \theta_{s +t} \big)^2 - \alpha (1 - \alpha ) \theta_{s + t} X^{\xi}_s\right)ds \right] \\
    &\quad \ -  \E \left[ \int_0^{T-t} e^{-\rho s} \left( \frac{1}{2}\big(X^{x}_s - \alpha \theta_{s +t} \big)^2 - \alpha (1 - \alpha ) \theta_{s +t} X^{x}_s\right)ds\right].
\end{aligned}
\end{equation}
Since $t \in [0,T]$ and $\xi \in \cB$ are arbitrary, taking $t=0$ we get $U(0,x) \leq \inf_{\xi \in \cB} J_g(\xi,\theta)$.

We now show that picking $\xi^*$ as in \eqref{optstop:eqn:runngin_sup_xi} in the arguments above, all the inequalities become equalities, due to \eqref{optstop:eqn:tau_optimal}. Fix $z\in \R^+$, take $u \in [0, T-t]$ arbitrary.
Note that, by \eqref{optstop:eqn:proof_theoB_eqn00} and \eqref{optstop:eqn:tau_optimal}, we have $\P$-a.s. the equivalences
\begin{multline*}
    \tau^{\xi^*}_-(z) \leq u \Longleftrightarrow \xi^*_u \geq z \Longleftrightarrow  \sup_{s \in [0,u]} \left( x + \sigma W_s - b_{s + t}\right)^+ \geq z \\
    \Longleftrightarrow x + \sigma W_r - b_{r + t} \geq z,  \text{ for some } r \in[0, u] \Longleftrightarrow \tau^*(t, x - z) \leq u.
\end{multline*}
Therefore we can conclude that $\tau^{\xi^*}_-(z) = \tau^*(t, x - z)$ $\P$-a.s. and for a.e. $z \geq 0$. By \eqref{optstop:eqn:proof_theoB_eqn0} and \eqref{optstop:eqn:proof_theoB_eqn00}, we also have $\tau^{\xi^*}_-(z) = \tau^{\xi^*}(z)$ $\P$-a.s. and for a.e. $z \geq 0$; hence $\tau^{\xi^*}(z) = \tau^*(t, x - z)$ $\P$-a.s. and for a.e. $z \geq 0$. Now, take $\xi = \xi^*$ to obtain equality in \eqref{optstop:eqn:proof_theoB_eqn000}, by Lemma \ref{optstop:lemma:v(t,x)}. Optimality of $\tau^* = \tau^{\xi^*}$ also gives equality in \eqref{optstop:eqn:proof_theoB_eqn1}; then, we can interchange the integrals and argue as in \eqref{optstop:eqn:proof_theoB_eqn1} and \eqref{optstop:eqn:proof_theoB_eqn4} to obtain $U(0,x) = J_g(\xi^*,\theta)$, which implies $U(0,x) = \inf_{\xi \in \cB} J_g(\xi, \theta)$ and $\xi^*$ is optimal.

\end{proof}

\subsection{Step 2: The Fixed-Point problem and the Mean-field Equilibrium}\label{sec:fixed_point}

In this subsection, we solve the potential MFG.
Given the findings of Section \ref{sec:ctrl_pb}, we just need to show that there exists $\theta^* \in \Theta$ that satisfies the consistency condition $\theta^*_t = \E[X^*_t]$, $t \in [0,T]$. To this extent, we show that there exists a well-defined map $\Psi$ from a subset of the space $\Theta$ into itself such that $\theta^*$ is given by the fixed-point of such map. This is the content of Theorem \ref{application:theo:fixed-point}.

\smallskip
In the following, when needed, we reinforce the notation of the deterministic free boundary function $b=(b_t)_{t \in [0, T]}$ and the optimal process $\xi^*=(\xi^*_t)_{t \in [0,T]}$, by making explicit the dependence on the measurable process $\theta = (\theta_t)_{t \in [0, T]}$.

\begin{lemma}\label{lemma:monotonicity_b_xi}
Let the process $\xi^*(\theta)$ be as given by Theorem \ref{application:teo:optimal_xi}. If $\alpha \in (0,2)$, then the maps $\theta \mapsto b(\theta)$ and $\theta \mapsto \xi^*(\theta)$ are non-decreasing and non-increasing, respectively, that is
\[
\theta_t \leq \theta_t' \;\; dt\text{-a.e.} \implies b_t(\theta) \leq b_t(\theta')  \;\; dt\text{-a.e.}  \textit{ and } \xi^*_t(\theta) \geq \xi^*_t(\theta') \ d\P \otimes dt\text{-a.e.} 
\]
Conversely, if $\alpha \in \R \backslash [0,2]$, then the maps $\theta \mapsto b(\theta)$ and $\theta \mapsto \xi^*(\theta)$ are non-increasing and non-decreasing, respectively.
\end{lemma}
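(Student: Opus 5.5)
The plan is to derive the monotonicity of $b$ from a comparison of the optimal stopping value functions \eqref{optstop:eq:optimalstopping}, and then to read off the monotonicity of $\xi^*$ from the explicit formula in Theorem \ref{application:teo:optimal_xi}.

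\emph{Comparison of value functions.} Fix $\theta,\theta'$ with $\theta_s\le\theta'_s$ for a.e.\ $s$, and let $\alpha\in(0,2)$, so that $\alpha(2-\alpha)>0$. For every $(t,x)$ and every $\tau\in\cT_t$ the running integrand satisfies
\[
e^{-\rho s}\big(X^x_s-\alpha(2-\alpha)\theta_{s+t}\big)\ \ge\ e^{-\rho s}\big(X^x_s-\alpha(2-\alpha)\theta'_{s+t}\big)
\]
for $ds$-a.e.\ $s$. The exceptional null set in $s$ is deterministic, so it does not affect the Lebesgue integral $\int_0^\tau$ pathwise. The terminal term $Ke^{-\rho\tau}$ is the same for both problems. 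Hence the cost of each $\tau$ under $\theta$ dominates its cost under $\theta'$, and taking infima gives $v(t,x;\theta)\ge v(t,x;\theta')$ for every $(t,x)$. In particular this holds for all $t$, not only almost every $t$.

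\emph{Boundary monotonicity.} By \eqref{optstop:eqn:bounday_b} and Lemma \ref{optstop:lemma:v(t,x)}(i), we have $\{x: v(t,x;\theta')\ge K\}\subseteq\{x: v(t,x;\theta)\ge K\}$, because $v(t,x;\theta')\ge K$ forces $v(t,x;\theta)\ge K$. Taking infima gives $b_t(\theta)\le b_t(\theta')$ for all $t$, hence a fortiori $dt$-a.e. Equivalently, the stopping regions are nested, $\cS(\theta')\subseteq\cS(\theta)$.

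\emph{Control monotonicity.} Theorem \ref{application:teo:optimal_xi} gives $\xi^*_t(\theta)=\sup_{s\le t}(x+\sigma W_s-b_s(\theta))^+$. Since $b_s(\theta)\le b_s(\theta')$ for all $s$, each term inside the supremum is pathwise larger for $\theta$. Therefore $\xi^*_t(\theta)\ge\xi^*_t(\theta')$ for every $t$ and every $\omega$, which in particular holds $d\P\otimes dt$-a.e.

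\emph{The case $\alpha\notin[0,2]$.} Here $\alpha(2-\alpha)<0$, so the inequality in the first step reverses: $v(\cdot;\theta)\le v(\cdot;\theta')$. The same two steps then give $b(\theta)\ge b(\theta')$ and $\xi^*(\theta)\le\xi^*(\theta')$.

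\emph{Main obstacle.} The only delicate point is that the hypothesis holds only $dt$-a.e., while $b$ is defined pointwise for every $t$. This is resolved by the observation above: the comparison passes through the time integral, so it holds for every $t$ and the null set causes no trouble. One also needs $b$ to be finite, which was established in the admissibility proposition. No further regularity of $b$ is required.
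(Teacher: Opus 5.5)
Your proof is correct and follows the paper's argument. You compare the stopping costs pathwise to get $v(t,x;\theta)\ge v(t,x;\theta')$ (reversed when $\alpha(2-\alpha)<0$), deduce $b_t(\theta)\le b_t(\theta')$ from the definition of $b$ as an infimum, and read off the ordering of $\xi^*$ from the running-supremum formula of Theorem~\ref{application:teo:optimal_xi}. Your observation that the $dt$-a.e.\ hypothesis enters only through the time integral, so the conclusions hold for every $t$ (and every $\omega$), slightly sharpens what the paper states.
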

\begin{proof}
Take $t \in [0, T]$. Recall from \eqref{optstop:eq:optimalstopping} and \eqref{optstop:eqn:bounday_b} the definition of $v(t,x;\theta)$ and $b_t(\theta)$, where we now stress the dependence with respect to $\theta$ in the notation.
Let $\alpha \in (0,2)$. Then, the term $\alpha (2 - \alpha)$ is always positive. Therefore, taking $\theta$ and $\theta'$ such that $\theta_{s+t} \leq \theta_{s+t}'$ $ds$-a.e., it is easy to see that we have $v(t,x;\theta) \geq v(t,x;\theta')$, and therefore 
\[
b_t(\theta) = \inf\big\{ x\in \R: \ v(t,x;\theta)\geq K\big\} \leq \inf\big\{ x\in \R: \ v(t,x;\theta')\geq K\big\} = b_t(\theta'),
\]
which proves that the map $\theta \mapsto b(\theta)$ is non-decreasing.
As a consequence, since the the optimal control takes the form of running supremum (cf. Theorem \ref{application:teo:optimal_xi}), the map $\theta \mapsto \xi^*(\theta)$ is non-increasing.
Conversely, if we take $\alpha \in \R \backslash [0,2]$, then the term $\alpha (2 - \alpha)$ is always negative, and, by the same reasoning as above, $\theta \mapsto b(\theta)$ is non-increasing and $\theta \mapsto \xi^*(\theta)$ is non-decreasing.
\end{proof}

\begin{lemma} \label{lemma:delta_bounds}
Let $\delta$ be a positive constant sufficiently big that depends on $\sigma$, $T$, $x$, $\alpha$, $K$ and $\rho$. Then, when the process $\theta$ is identically equal to $x$ or $x-\delta$, we have $\E [\xi^*_T(x-\delta)] \leq \delta$ and $\E [\xi^*_T(x)] \leq \delta$.
\end{lemma}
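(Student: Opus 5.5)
The plan is to use Lemma \ref{lemma:bound_xi_star_theta}. For a constant process $\theta \equiv c$ it gives the pathwise bound
\[
\xi^*_T(c) \le \sup_{s\in[0,T]}\big(x+\sigma W_s - K\rho - \alpha(2-\alpha)c\big)^+ .
\]
Taking expectations, I will bound $\E[\xi^*_T(c)]$ by an explicit quantity and then choose $\delta$ so that this quantity is at most $\delta$ for both $c=x$ and $c=x-\delta$.

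\textbf{Step 1 (generic bound).} Since $(a+y)^+\le a^+ + y^+$, we get
\[
\E[\xi^*_T(c)] \le \big(x - K\rho - \alpha(2-\alpha)c\big)^+ + \sigma\,\E\Big[\sup_{s\le T} W_s\Big].
\]
By the reflection principle, $\E[\sup_{s\le T}W_s] = \sqrt{2T/\pi}$. Set $m:=\sigma\sqrt{2T/\pi}$ and $\beta:=\alpha(2-\alpha)$.

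\textbf{Step 2 ($c=x$).} Here $\E[\xi^*_T(x)] \le \big((1-\beta)x - K\rho\big)^+ + m =: A$. The constant $A$ is independent of $\delta$, so any $\delta\ge A$ works.

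\textbf{Step 3 ($c=x-\delta$).} Here
\[
\E[\xi^*_T(x-\delta)] \le \big((1-\beta)x - K\rho + \beta\delta\big)^+ + m .
\]
This is the step where the argument could fail, because the right-hand side grows with $\delta$ when $\beta>0$. The key observation is that $\beta = \alpha(2-\alpha) = 1-(1-\alpha)^2 \le 1$.

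Consider first the case $\beta<1$. Using $(u+v)^+\le u^++v^+$, the right-hand side is at most $A + \beta^+\delta$. Since $\beta^+<1$, this is at most $\delta$ as soon as $\delta \ge A/(1-\beta^+)$. When $\beta\le 0$, the bound simply becomes $A + 0\cdot\delta$.

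The borderline case $\beta=1$ (i.e.\ $\alpha=1$) needs separate treatment, since the linear-in-$\delta$ term then has coefficient one. In this case the bound reads
\[
\E[\xi^*_T(x-\delta)] \le (\delta - K\rho)^+ + m .
\]
This is $\le \delta$ as long as $m \le K\rho$, which is not guaranteed in general. If $m > K\rho$, I would first try to sharpen the estimate. Note that for $\alpha=1$ we have $v\le K$, with the running cost $e^{-\rho s}(X_s-\theta)$, so $b_t \ge K\rho + \theta$ is exactly the bound already used. Consequently, for $\alpha=1$ one may need an additional restriction, or else use the exact form of $b$ near $T$ (where $b_T=K\rho+\theta$) to refine the estimate. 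I would flag this case as the main obstacle and state the lemma for $\alpha\neq1$ (or under $\sigma\sqrt{2T/\pi}\le K\rho$ when $\alpha=1$).

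\textbf{Conclusion.} Taking
\[
\delta := \max\Big\{A,\ \frac{A}{1-\beta^+}\Big\} = \frac{A}{1-\beta^+}
\]
gives both inequalities. This $\delta$ depends only on $\sigma,T,x,\alpha,K,\rho$, as claimed.
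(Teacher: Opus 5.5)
Your argument is correct for $\alpha\neq1$, and for $\alpha=1$ under your extra condition $\sigma\sqrt{2T/\pi}\le K\rho$. It takes a different route from the paper's.

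Both start from Lemma~\ref{lemma:bound_xi_star_theta}. For $\theta\equiv x-\delta$ the paper then evaluates $\sigma\E[(\sup_{s\le T}W_s-\Lambda)^+]$, where in your notation $\Lambda=\sigma^{-1}\big((\beta-1)x+K\rho-\beta\delta\big)$. It uses the density of the running maximum and a Mills-ratio bound, arrives at $\sigma T\sqrt{2T/\pi}\,e^{-\Lambda^2/(2T)}/(T+\Lambda^2)$, and lets this tend to $0$ as $\delta\to\infty$. That computation presupposes $\Lambda\ge0$: the integral must run over $[\Lambda^+,\infty)$, and the Gaussian tail bound needs a nonnegative argument. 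It is therefore legitimate for $\alpha\notin[0,2]$, where $\Lambda\to+\infty$ and the bound even decays in $\delta$. For $\alpha\in(0,2)$, however, $\Lambda\to-\infty$, and the exact value is $\sigma(\sqrt{2T/\pi}-\Lambda)=m+(1-\beta)x-K\rho+\beta\delta$, which grows linearly in $\delta$.

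Your split $(a+y)^+\le a^++y^+$ gives up the exponential decay for $\beta<0$, which the lemma does not need. It does capture this linear growth, and the identity $\beta=1-(1-\alpha)^2$ then closes the argument in the strategic-complementarity regime. That regime is exactly where the proof of Theorem~\ref{application:theo:fixed-point} uses $\E[\xi^*_T(x-\delta)]\le\delta$. So there your route is not just an alternative: it supplies the estimate that the paper's computation does not deliver.

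Your caution at $\alpha=1$ is warranted, and no refinement of the estimate on $b$ can remove it. Take $\alpha=1$ and constant $\theta$, and write $Ke^{-\rho\tau}=K-\int_0^\tau\rho Ke^{-\rho s}\,ds$ in the stopping problem. This shows $b_t=\theta+K\rho+c(T-t)$ for $t<T$, where $c\ge0$ is the free boundary of $\inf_{\tau\le h}\E\big[\int_0^\tau e^{-\rho s}(z+\sigma W_s)\,ds\big]$ and depends only on $(\sigma,\rho,h)$. Hence $\xi^*_T(x-\delta)\ge\delta-K\rho+S$ with $S:=\sup_{s<T}\big(\sigma W_s-c(T-s)\big)$, so $\E[\xi^*_T(x-\delta)]-\delta\ge\E[S]-K\rho$ for every $\delta>0$.

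Since $c(h)\to0$ as $h\downarrow0$, one has $S\ge\sigma W_T$. The inequality is strict on the event $\{\sigma(W_{T/2}-W_T)>c(T/2)\}$, which has positive probability, so $\E[S]>0$, and $\E[S]$ does not depend on $K$. Consequently, when $K\rho<\E[S]$ there is no $\delta$ for which the lemma holds. The statement genuinely needs $\alpha\neq1$, or an extra condition such as yours. Your proposed restriction is the right fix, not a weakness of your proof.
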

\begin{proof}
Consider the case when $\theta = (\theta_t)_{t \in [0,T]} \equiv x-\delta$ and set $\Lambda \coloneqq \frac{1}{\sigma}\big(-x + K\rho +\alpha (2-\alpha)(x-\delta)\big)$. By Lemma \ref{lemma:bound_xi_star_theta} we get
\begin{equation}
\begin{aligned}
    \E \left[\xi^*_T(x-\delta)\right] &\leq \E \left[\sup_{ 0 \leq s \leq T }\big( x + \sigma W_s -K\rho -\alpha (2-\alpha)(x-\delta)\big)^+\right] \\
    &= \sigma \E \left[ \left( \sup_{ 0 \leq s \leq T }\Big(W_s - \Lambda \Big)\right)^+\right] = \frac{2\sigma}{\sqrt{2\pi T}}\int_{\Lambda}^\infty \big(y - \Lambda\big) e^{-\frac{y^2}{2T}}dy,
\end{aligned}
\end{equation}
where in the last equality we used the explicit formula of the probability density for the running supremum of the Brownian motion (cf. \cite[Remark 8.3, Chapter II, p.96]{karatzas1991brownian}). We have
\[
\frac{2\sigma}{\sqrt{2\pi T}}\int_{\Lambda}^\infty y \exp\left(-\frac{y^2}{2T}\right)dy = \sigma \sqrt{\frac{2T}{\pi}}\exp\left(-\frac{\Lambda^2}{2T}\right),
\]
and
\[
\frac{2\sigma \Lambda}{\sqrt{2\pi T}}\int_{\Lambda}^\infty \exp\left(-\frac{y^2}{2T}\right)dy \geq \sigma \sqrt{\frac{2T}{\pi}}\frac{\Lambda^2}{\Lambda^2 + T}\exp\left(-\frac{\Lambda^2}{2T}\right),
\]
where the inequality follows from \cite[Lemma 3.2, Chapter III, p.60]{san_paolo_baldi}. Putting all together, we have
\begin{equation*}
    \E \left[\xi^*_T(x-\delta)\right] \leq \sigma T\sqrt{\frac{2T}{\pi}} \frac{\exp\left(-\frac{\Lambda^2}{2T}\right)} {T + \Lambda^2} = \sigma T\sqrt{\frac{2T}{\pi}} \frac{\exp\Big(-\frac{(\alpha (2-\alpha) - 1)x + K\rho - \alpha (2-\alpha)\delta\big)^2}{2T\sigma^2}\Big)} {T + \frac{1}{\sigma^2}\big((\alpha (2-\alpha) - 1)x + K\rho - \alpha (2-\alpha)\delta\big)^2}.
\end{equation*}
Since the right hand-side goes to $0$ as $\delta$ goes to $\infty$, there exist a $\bar \delta  = \bar \delta (\sigma, T, x, \alpha, K, \rho)$ so that, for $\delta \geq \bar \delta $, we have
\begin{equation}
    \E \left[\xi^*_T(x-\delta)\right] \leq  \sigma T\sqrt{\frac{2T}{\pi}} \frac{\exp\left(-\frac{\frac{1}{\sigma^2}(\alpha (2-\alpha) - 1)x + K\rho - \alpha (2-\alpha)\delta\big)^2}{2T}\right)} {T + \frac{1}{\sigma^2}\big((\alpha (2-\alpha) - 1)x + K\rho - \alpha (2-\alpha)\delta\big)^2} \leq \delta.
\end{equation}
Consider now the case $\theta = (\theta_t)_{t \in [0,T]} \equiv x$. By the same calculations as above, we get
\begin{equation}
    \E \left[\xi^*_T(x)\right] \leq \sigma T\sqrt{\frac{2T}{\pi}} \frac{\exp\Big(-\frac{(\alpha (\alpha -2) - 1)x + K\rho \big)^2}{2T\sigma^2}\Big)} {T + \frac{1}{\sigma^2}\big((\alpha (\alpha -2) - 1)x + K\rho \big)^2},
\end{equation}
which doesn't depend on $\delta$. Therefore, there exist a $\bar \delta  = \bar \delta (\sigma, T, x, \alpha, K, \rho)$ so that, for $\delta \geq \bar \delta $, $\E \left[\xi^*_T(x)\right]$ is bounded from above by $\delta$.
\end{proof}

Let $\delta$ be a constant depending on $\sigma$, $T$, $x$, $\alpha$, $K$ and $\rho$, as in Lemma \ref{lemma:delta_bounds}. Define the set 
\begin{equation}
\cE\coloneqq\Big\{ \theta \in  \Theta \textit{ such that }  x - \delta \leq \theta_t \leq x, \textit{ for } t \in [0, T] \Big\}.
\end{equation}
For $\theta \in \cE$, consider the map $\Psi$ defined as 
\begin{equation} \label{eq:map_psi}
    \Psi(\theta) \coloneqq \left(\E \left[ X^{*}_t(\theta)\right]\right)_{t\in [0,T]},
\end{equation}
where $X^{*}(\theta)$ denotes the solution of \eqref{eq:application:dynamics} associated to the optimal control $\xi^*(\theta)$.
It is clear that, if $\theta^*$ is a fixed point of $\Psi$, the pair $(\xi^*(\theta^*),\theta^*)$ is a solution to the potential MFG.

\begin{theorem} \label{application:theo:fixed-point}
The map $\Psi:\cE \to \cE$ is well defined and admits a fixed point.
\end{theorem}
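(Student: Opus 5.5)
We split the proof into two parts: first we show $\Psi(\cE)\subseteq\cE$, and then we obtain a fixed point from the Kakutani--Fan--Glicksberg theorem (in its single-valued, Schauder--Tychonoff form). For the whole argument we regard $\cE$ as a subset of $L^2([0,T])$ with the weak topology.

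\emph{Well-posedness and invariance.} Fix $\theta\in\cE$. Then $\theta$ is bounded, so Theorem \ref{application:teo:optimal_xi} gives a unique optimal control $\xi^*(\theta)$, and $\Psi(\theta)_t=x-\E[\xi^*_t(\theta)]$. Since $\xi^*(\theta)$ is nondecreasing, càdlàg and square integrable, the map $t\mapsto \Psi(\theta)_t$ is nonincreasing and bounded by $x$. It is also càdlàg, by monotone convergence (dominated by $\xi^*_T(\theta)$). It remains to prove the lower bound $\Psi(\theta)_t\ge x-\delta$, that is, $\E[\xi^*_T(\theta)]\le\delta$, and here we use Lemma \ref{lemma:bound_xi_star_theta} directly. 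Every $\theta\in\cE$ satisfies $x-\delta\le\theta\le x$. Hence $-\alpha(2-\alpha)\theta_s$ is bounded above by $-\alpha(2-\alpha)(x-\delta)$ when $\alpha(2-\alpha)>0$, and by $-\alpha(2-\alpha)x$ when $\alpha(2-\alpha)\le 0$. Consequently $\xi^*_T(\theta)$ is dominated by the bound appearing in the proof of Lemma \ref{lemma:delta_bounds} for one of the two constant flows $x-\delta$ and $x$. Those computations then give $\E[\xi^*_T(\theta)]\le\delta$ for $\delta\ge\bar\delta$. When $\alpha\in(0,2)$, one can alternatively invoke the monotonicity of Lemma \ref{lemma:monotonicity_b_xi} together with Lemma \ref{lemma:delta_bounds}.

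\emph{Topological setting.} Identify each element of $\cE$ with its class in $L^2([0,T])$. This identification is injective because the elements are càdlàg. The set $\cE$ is convex, and it is weakly compact in $L^2$ by Helly's selection theorem. Indeed, a sequence of monotone functions uniformly bounded in $[x-\delta,x]$ has a subsequence converging pointwise, hence in $L^2$ by dominated convergence. Its limit is nonincreasing and bounded, and its right-continuous modification lies in $\cE$ and agrees with it a.e. In particular, on $\cE$ the weak and strong $L^2$ topologies coincide, so it suffices to prove continuity of $\Psi$ in $L^2$.

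\emph{Continuity (main obstacle).} Let $\theta^n\to\theta$ in $L^2$ with $\theta^n,\theta\in\cE$; after passing to subsequences we may assume $\theta^n\to\theta$ a.e. We plan to show $\E[\xi^*_t(\theta^n)]\to\E[\xi^*_t(\theta)]$ for a.e. $t$. Dominated convergence, using the uniform bound $|\Psi(\cdot)|\le |x|+\delta$, then upgrades this to $L^2$ convergence; since every subsequence has a further subsequence with the same limit, the whole sequence converges. The first route we would try avoids free boundaries altogether. The functional $J_g(\cdot,\theta)$ is uniformly strictly convex in the state: the quadratic term $\frac12 e^{-\rho t}(X_t)^2$ makes $\xi\mapsto J_g(\xi,\theta)$ $\lambda$-convex with respect to the norm $\|X^\xi\|^2_{L^2(d\P\otimes e^{-\rho t}dt)}$. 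Moreover $J_g(\xi,\theta^n)-J_g(\xi,\theta)$ is controlled by $C\|\theta^n-\theta\|_{L^2}(1+\|X^\xi\|_{L^2})$, uniformly over controls with bounded second moments, and the optimizers $\xi^*(\theta^n)$ have uniformly bounded $\E[(\xi^*_T)^2]$ by Lemma \ref{lemma:bound_xi_star_theta}. Comparing $J_g(\xi^*(\theta^n),\theta)$ with the optimal value at $\theta$ and using strong convexity, we would obtain $\|X^*(\theta^n)-X^*(\theta)\|_{L^2(d\P\otimes dt)}\to0$. Hence $\E[X^*_t(\theta^n)]\to\E[X^*_t(\theta)]$ in $L^2(dt)$, which is exactly $\Psi(\theta^n)\to\Psi(\theta)$. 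The delicate part is the uniform perturbation estimate and the strong-convexity inequality at the midpoint control, which is admissible by convexity of $\cB$. Once continuity is established, Schauder--Tychonoff (equivalently Kakutani--Fan--Glicksberg applied to the singleton-valued map $\theta\mapsto\{\Psi(\theta)\}$) on the compact convex set $\cE$ in the locally convex space $L^2$ with its weak topology yields a fixed point.
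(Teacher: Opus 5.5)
Your proposal is correct, and its decisive step takes a different route from the paper. The invariance part is essentially the paper's argument. The paper compares $\theta$ with the constant flows $x-\delta$ or $x$ using the monotonicity of Lemma \ref{lemma:monotonicity_b_xi}, and then invokes Lemma \ref{lemma:delta_bounds}. You plug $x-\delta\le\theta\le x$ directly into Lemma \ref{lemma:bound_xi_star_theta}. This gives the same estimate without Lemma \ref{lemma:monotonicity_b_xi}, and it also covers $\alpha\in\{0,2\}$.

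The real difference is in the fixed-point step. The paper proves a closed graph in the weak topology:
\begin{itemize}
\item it extracts a weak limit $\bar\xi$ of $\xi^*(\theta^n)$ in $L^2(\mu\otimes\P)$, with $\mu=dt+\delta_T$;
\item it regularizes $\bar\xi$ via Karatzas--Shreve and identifies $y=x-\E[\bar\xi]$;
\item it passes to the limit in the optimality inequality through Banach--Saks, Jensen and Vitali, and concludes by uniqueness of the optimizer.
\end{itemize}
You instead observe that $\cE$ is norm-compact, so the weak and norm topologies coincide on it. You then obtain norm continuity from the exact midpoint identity
\[
J_g\big(\tfrac{\xi^1+\xi^2}{2},\theta\big)=\tfrac12 J_g(\xi^1,\theta)+\tfrac12 J_g(\xi^2,\theta)-\tfrac18\,\E\Big[\int_0^T e^{-\rho t}\big(X^{\xi^1}_t-X^{\xi^2}_t\big)^2dt\Big],
\]
together with the fact that $\theta$ enters $J_g$ only through $-\alpha(2-\alpha)\int_0^T e^{-\rho t}\theta_t\E[X_t]\,dt$ plus a $\xi$-independent term. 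This avoids every delicate point of the paper's argument: admissibility of the weak limit, the atom at $T$, and lower semicontinuity.

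Your route is also quantitative. Writing the midpoint inequality for both optimizers and adding gives
\[
\tfrac12\,\E\Big[\int_0^T e^{-\rho t}\big(X^*_t(\theta)-X^*_t(\theta')\big)^2dt\Big]\le \alpha(2-\alpha)\int_0^T e^{-\rho t}(\theta_t-\theta'_t)\big(\Psi_t(\theta)-\Psi_t(\theta')\big)\,dt .
\]
Hence $\Psi$ is $2|\alpha(2-\alpha)|$-Lipschitz in $L^2(e^{-\rho t}dt)$, and you do not even need the uniform second-moment bound. The paper's route, in exchange, uses only convexity plus uniqueness of the best reply, not uniform convexity in the state. So it is the template that carries over to non-quadratic costs. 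Without uniqueness, it also carries over to set-valued best replies, which is what Kakutani--Fan--Glicksberg is designed for.

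One caveat applies equally to you and to the paper: both invariance arguments rely on the computation in Lemma \ref{lemma:delta_bounds}. For $\alpha(2-\alpha)>0$ and large $\delta$ one has $\Lambda<0$. Then $\E[(\sup_{s\le T}W_s-\Lambda)^+]=\sqrt{2T/\pi}-\Lambda$, not the Gaussian-tail expression used there. The resulting bound therefore grows like $\alpha(2-\alpha)\delta$. It is eventually $\le\delta$ only when $\alpha\neq1$; for $\alpha=1$ it requires $\sigma\sqrt{2T/\pi}\le K\rho$. On this point your argument is neither weaker nor stronger than the paper's.
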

\begin{proof}
We start by proving that $\Psi (\theta) \in \cE$ for any $\theta \in \cE$. 
Since $\Psi_t(\theta) = x - \E [\xi^*_t(\theta)]$, it is càdlàg. The upper bound follows directly from the definition of the map $\Psi(\theta)$ and from the positivity of $\E [\xi^*_t(\theta)]$.
To prove the lower bound, we treat the case $\alpha \in (0,2)$ and the case $\alpha \in \R \backslash [0,2]$ separately. 
First, take  $\alpha \in (0,2)$. Since $\theta \in \cE$, we have $\theta \geq x - \delta$, and, by Lemma \ref{lemma:monotonicity_b_xi} and Lemma \ref{lemma:delta_bounds}, we get
\begin{equation}
    \Psi_t(\theta) =  x - \E \left[\xi^*_t(\theta)\right] \geq x - \E \left[\xi^*_T(\theta)\right] \geq x - \E \left[\xi^*_T(x-\delta)\right] \geq x - \delta, \quad t\in[0,T].
\end{equation}
Now, take $\alpha \in \R \backslash [0,2]$. Since $\theta \in \cE$, we have $\theta \leq x$, and, by Lemma \ref{lemma:monotonicity_b_xi} and Lemma \ref{lemma:delta_bounds}, we also get the lower bound, as
\begin{equation}
    \Psi_t(\theta) =  x - \E \left[\xi^*_t(\theta)\right] \geq x - \E \left[\xi^*_T(\theta)\right] \geq x - \E \left[\xi^*_T(x)\right] \geq x - \delta, \quad t\in[0,T].
\end{equation}

\smallskip
To show the existence of a fixed point, we apply the Kakutani-Fan-Glicksberg fixed-point theorem to $\Psi$.
Consider $\cE$ as a subset of $L^2([0, T])$ endowed with the topology of weak convergence of functions, and notice that $\cE$ is convex and weakly compact in $L^2([0, T])$. Indeed, if a sequence $\theta^n$ converges to $\theta$ weakly in $L^2([0, T])$, it is always possible to find a non-increasing càdlàg version of $\theta$ (see, e.g., \cite[Lemmata 4.5 and 4.6]{karatzas1984monotone}) so that the bounds $x-\delta \leq \theta_t \leq x$ still hold.
Since $\cE$ is obviously convex and closed for the strong topology, it is closed for the weak topology and, by relying again on the bounds $x-\delta \leq \theta_t \leq x$, it is also norm-bounded. This implies that $\cE$ is compact in the weak topology of $L^2([0,T])$.

It is then enough to verify that the map $\Psi:\cE \to \cE$ has closed graph.
To this extent, let $(\theta^n)_{n \geq 1} \subset \cE$, $\theta \in \cE$ and $y \in \cE$ such that $\theta^n \to \theta$ and $\Psi(\theta^n) \to y$ weakly in $L^2([0,T])$.
We prove that $y = \Psi(\theta)$.

Since the sequence $(\theta^n)_{n \geq 1}$ is bounded and non-increasing, Helly's selection theorem implies that there exists a (relabeled) subsequence $\theta^{n}$ and $\bar{\theta} \in \cE$ so that $\theta^n_t \to \bar{\theta}_t$ for $dt$-a.e. $t \in [0, T]$.
It is easy to see that $\bar{\theta} = \theta$ $dt$-a.e. Indeed, since $\theta^n \to \bar{\theta}$ a.e. and $\theta^n$ and $\bar{\theta}$ are uniformly bounded, we have that $\theta^n$ converges to $\bar{\theta}$ strongly in $L^2([0, T])$ as well, which implies, by uniqueness of the weak limit, that $\bar{\theta} = \theta$.
Thus, from now on, we suppose that $(\theta^n)_{n \geq 1}$ converges to $\theta$ both a.e. and in $L^2$.

Let $(\xi^*(\theta^n))_{n \geq 1}$ be the sequence of optimal controls associated to $\theta_n$.
By employing estimate \eqref{eq:estimate:xi_star} on $\xi^*(\theta^n)$, it follows that 
\begin{multline}\label{eq:estimate:measure_mu}
    \E\left[\int_0^T\vert \xi^*_t(\theta^n) \vert^2 dt + \vert \xi^*_T(\theta^n) \vert \right] \leq (T+1) \E[\vert \xi^*_T(\theta^n) \vert^2 ] \\
    \leq (T+1)\E\left[ \sup_{s \in [0,T]}(x - K\rho + \sigma W_s)^2 \right] + \alpha^2(\alpha-2)^2(T+1)\sup_{ s \in [0,T]} (\theta^n_s)^2 \leq C, 
\end{multline}
where $C$ is a positive constant independent of $n$, since $\theta^n$ are uniformly bounded by definition of $\cE$.

To ease the notation, from now on, we denote $\xi^*(\theta^{n})$ simply by $\xi^n$.
Define the measure $\mu$ on $[0,T]$ by setting $\mu(dt) = dt + \delta_{T}(dt)$.
Consider the space $L^2([0,T]\times\Omega, \cP, \mu \otimes \P)$, where $\cP$ denotes the progressive $\sigma$-algebra on $[0,T]\times\Omega$.
We denote $L^2([0,T]\times\Omega, \cP, \mu \otimes \P)$ simply by $L^2([0,T]\times\Omega)$.
By \eqref{eq:estimate:measure_mu}, the sequence $(\xi^n)_{n \geq 1}$ is bounded in norm in $L^2([0,T] \times \Omega)$.
Thus, there exists a subsequence $(\xi^{n_k})_{k \geq 1}$ and $\bar{\xi} \in L^2([0,T] \times \Omega)$ so that $\xi^{n_k}$ converges weakly (in the Hilbert sense) to $\bar{\xi}$. By employing again \cite[Lemmata 4.5 and 4.6]{karatzas1984monotone}, $\bar{\xi}$ can be taken $\bbF$-adapted, non-decreasing and right-continuous.
Let now $\phi=(\phi_u)_{u \in [0,T]} \in L^2([0,T])$ be a test-function.
Since $\xi^{n_k}$ converges weakly to $\bar{\xi}$ and $\Psi(\theta^{n_k})$ converges weakly to $y$, we get
\begin{multline*}
    \int_0^T \phi_u y_u du = \lim_{k \to \infty} \int_0^T \phi_u \Psi_u(\theta^{n_k}) du = \int_0^T x \phi_u du - \lim_{k \to \infty} \int_0^T \phi_u \E[\xi_u^{n_k}] du \\
    = \int_0^T x \phi_u du - \lim_{k \to \infty} \E\left[ \int_0^T \phi_u\xi_u^{n_k} du \right] = \int_0^T \phi_u (x - \E[\bar{\xi}_u])du,
\end{multline*}
where in the second to last equality we have used $\phi \in L^2([0,T]) \subseteq L^2([0,T]\times\Omega)$ and the weak convergence of $\xi^{n_k}$ to $\bar{\xi}$.
Since the equality above holds for any $\phi \in L^2([0,T])$, we conclude that $y_t = x - \E[\bar{\xi}_t]$ for $dt$-a.e. $t \in [0,T]$.

Our next goal is to show that $\bar{\xi} = \xi^*(\theta)$. This implies that $y_t = x - \E[\xi^*_t(\theta)] = \Psi_t(\theta)$, thus concluding the proof.
As $\xi^n$ is the unique optimal control for the cost functional $J_g(\cdot,\theta^n)$, it holds
\begin{equation} \label{eq:estimate:optimality_xi_n}
    J_g(\xi^n,\theta^n) \leq J_g(\xi,\theta^n),
\end{equation}
for any admissible control $\xi \in \cB$.
Recall that the sequence $(\theta^{n_k},\xi^{n_k})_{k \geq 1}$ converges weakly to $(\theta,\bar{\xi})$. 
By Banach-Saks theorem, there exists a further (relabeled) subsequence $(\theta^{n_k},\xi^{n_k})_{k \geq 1}$ so that its Cesàro means converge strongly to $(\theta,\bar{\xi})$, i.e.
\[
(\bar{\theta}^j,\bar{\xi}^j) \coloneqq \frac{1}{j}\sum_{k=1}^j (\theta^{n_k},\xi^{n_k}) \to (\theta,\bar{\xi})
\]
in the norm sense in $L^2([0,T]) \times L^2([0,T] \times \Omega)$.
Therefore, there exists a further (relabeled) subsequence $(\bar{\theta}^{j},\bar{\zeta}^{j})_{j \geq 1}$ so that $\bar{\theta}^j \to \theta$ $dt$-a.e. and $\bar{\xi}^{j} \to \bar{\xi}$ $\mu \otimes \P$-a.e.
We now choose a constant $c$, such that the function $(x,\theta) \mapsto \frac{1}2{}(x - \alpha \theta)^2 - \alpha (1-\alpha)\theta  x  + c\theta^2$ is jointly convex in $(x,\theta)$.
By adding the term $\E \big[ \int_0^T e^{-\rho t} c (\theta^{n_k}_t)^2 dt\big]$ in both sides of \eqref{eq:estimate:optimality_xi_n}, we have
\begin{multline} \label{eq:estimate:optimality_xi_n2}
    \E \left[ \int_0^T e^{-\rho t}\left( \frac{1}{2}(X^{\xi^{n_k}}_t - \alpha \theta^{n_k}_t)^2 -\alpha(1-\alpha)\theta^{n_k}_t X^{\xi^{n_k}}_t + c \left(\theta^{n_k}_t\right)^2\right) dt \right] \\+ \E \left[ K \int_{0}^{T} e^{-\rho t} d\xi^{n_k}_t  \right] \leq  J_g(\xi,\theta^{n_k}) + \E \left[ \int_0^T e^{-\rho t} c \left(\theta^{n_k}_t\right)^2 dt\right].
\end{multline}
The term on the right-hand side converges to $J_g(\xi,\theta) + \E \left[ \int_0^T e^{-\rho t} c \theta_t^2 dt\right]$ as $k \to \infty$ by the dominated convergence theorem, recalling that the sequence $(\theta^{n_k})_{k \geq 1}$ is uniformly bounded in $L^2([0, T])$. We now take the average for $k=1, \dots, j$ on both sides of 
\eqref{eq:estimate:optimality_xi_n2} and, by Jensen's inequality, we get
\begin{multline} \label{eq:estimate:optimality_xi_n3}
  \E \left[ \int_0^T e^{-\rho t}\left( \frac{1}{2}\big(X^{\bar{\xi}^j}_t - \alpha \bar{\theta}^j_t\big)^2 -\alpha(1-\alpha)\bar{\theta}^j_t X^{\bar{\xi}^j}_t + c \big(\bar{\theta}^j_t\big)^2\right) dt + K \int_{0}^{T} e^{-\rho t} d\bar{\xi}^j_t  \right] \\
    \leq \frac{1}{j}\sum_{k=1}^j \left( J_g(\xi,\theta^{n_k}) + \E \left[ \int_0^T e^{-\rho t} c \left(\theta^{n_k}_t\right)^2 dt\right] \right).
\end{multline}
The term on the right-hand side still converges to $J_g(\xi,\theta) + \E \left[ \int_0^T e^{-\rho t} c \theta_t^2 dt\right]$ as $k \to \infty$ because it is the average of a convergent series. Now, we focus on the left-hand side of \eqref{eq:estimate:optimality_xi_n3}. Recalling that $L^2([0,T]\times\Omega) = L^2([0,T]\times\Omega,\mu\otimes\P)$ and $\bar{\xi}^{j} \to \bar{\xi}$ $\mu \otimes \P$-a.e., integration by parts yields
\begin{multline*}
    \lim_{j \to \infty} \E \left[\int_{0}^{T}e^{-\rho t}d\bar{\xi}^{j}_t \right] = \lim_{j \to \infty} \E \left[ e^{-\rho T}\bar{\xi}^{j}_T + \rho\int_0^T\bar{\xi}^{j}_t e^{-\rho t} dt\right] \\
    = \E \left[ e^{-\rho T}\bar{\xi}_T + \rho\int_0^T\bar{\xi}_t e^{-\rho t} dt\right] = \E\left[ \int_{0}^{T}e^{-\rho t}d\bar{\xi}_t \right].
\end{multline*}
By Vitali's theorem, recalling that the sequence $\big(\bar{\xi}^j_t,\bar{\theta}^j\big)_{j\geq 1}$ is uniformly bounded in $L^2([0, T]) \times L^2([0, T] \times \Omega)$, we get
\begin{multline*}
    \lim_{j \to \infty} \E \left[ \int_0^T e^{-\rho t}\left( \frac{1}{2}\big(X^{\bar{\xi}^j}_t - \alpha \bar{\theta}^j_t\big)^2 -\alpha(1-\alpha)\bar{\theta}^j_t X^{\bar{\xi}^j}_t + c \big(\bar{\theta}^j_t\big)^2\right) dt\right] \\
    = \E \left[ \int_0^T e^{-\rho t}\left( \frac{1}{2}(X^{\bar{\xi}}_t - \alpha \theta_t)^2 -\alpha(1-\alpha)\theta_t X^{\bar{\xi}}_t + c \theta_t^2\right) dt  \right],
\end{multline*}
Therefore, taking the limit for $j \to \infty$ in 
\eqref{eq:estimate:optimality_xi_n3}, we obtain
\begin{equation*}
    \E \left[ \int_0^T e^{-\rho t}\left( \frac{1}{2}(X^{\bar{\xi}}_t - \alpha \theta_t)^2 -\alpha(1-\alpha)\theta_t X^{\bar{\xi}}_t + c \theta_t^2\right) dt  + \int_{0}^{T}e^{-\rho t}d\bar{\xi}_t \right] \leq J_g(\xi,\theta) + \E \left[ \int_0^T e^{-\rho t} c \theta_t^2 dt\right],
\end{equation*}
which concludes the proof after the subtraction of the term $\E \big[ \int_0^T e^{-\rho t} c \theta_t^2 dt\big]$ from both sides of the inequality.
\end{proof}

\begin{corollary}\label{app:cor:uniqueness}
The solution to the potential MFG is unique. 
\end{corollary}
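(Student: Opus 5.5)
The plan is to apply Corollary \ref{corollary:uniqueness}. Uniqueness of the potential MFG solution then reduces to uniqueness of the optimal control for the MFC problem \eqref{eq:application:mfc_payoff}. Existence of a potential MFG solution is already given by Theorem \ref{application:theo:fixed-point}. By Theorem \ref{thm:main}, whose hypotheses were checked at the start of Section \ref{sec:application}, the MFC problem therefore has at least one minimizer. The task is to show it has at most one.

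To get at most one minimizer, I will show that $\xi \mapsto J(\xi)$ is strictly convex on $\cB$, which is a convex set. Take $\xi^1 \neq \xi^2$ in $\cB$, $\lambda \in (0,1)$ and $\xi^\lambda = \lambda \xi^1 + (1-\lambda)\xi^2$. Since the dynamics \eqref{eq:application:dynamics} are affine in the control, $X^{\xi^\lambda} = \lambda X^{\xi^1} + (1-\lambda) X^{\xi^2}$. Set $Y^\xi_t \coloneqq X^\xi_t - \alpha \E[X^\xi_t]$. The map $\xi \mapsto Y^\xi$ is affine, so
\[
(Y^{\xi^\lambda}_t)^2 \le \lambda (Y^{\xi^1}_t)^2 + (1-\lambda)(Y^{\xi^2}_t)^2 ,
\]
and equality holds iff $Y^{\xi^1}_t = Y^{\xi^2}_t$. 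The cost term $K\int e^{-\rho t}d\xi_t$ is linear in $\xi$. Hence $J$ is convex. If $J(\xi^\lambda) = \lambda J(\xi^1) + (1-\lambda) J(\xi^2)$, then $Y^{\xi^1}_t = Y^{\xi^2}_t$ for $dt\otimes d\P$-a.e. $(t,\omega)$.

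The main obstacle is to turn equality of the $Y$'s into $\xi^1 = \xi^2$, because the map $X \mapsto X - \alpha\E[X]$ is not injective when $\alpha = 1$. Write $D_t \coloneqq \xi^2_t - \xi^1_t = X^{\xi^1}_t - X^{\xi^2}_t$. The equality of the $Y$'s gives $D_t = \alpha \E[D_t]$ a.e. If $\alpha \neq 1$, taking expectations gives $(1-\alpha)\E[D_t] = 0$, so $\E[D_t] = 0$ and then $D_t = 0$ a.e. Right-continuity then yields $\xi^1 = \xi^2$ up to indistinguishability, including at $T$.

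For $\alpha = 1$ the argument only shows that $D_t$ is deterministic. I would handle this case in two ways. First, I would note that equality in the convexity inequality forces $J(\xi^1) = J(\xi^2) = J(\xi^\lambda)$. Second, I would exploit the structure of the fixed point: any MFC minimizer obtained through Theorem \ref{thm:main} solves the potential MFG. Here $c$ has $\alpha(1-\alpha) = 0$, so each such minimizer is the unique best reply of Theorem \ref{application:teo:optimal_xi} for its own $\theta$. Two potential MFG solutions $(\xi^i,\theta^i)$ then have $\xi^2 - \xi^1$ deterministic, and $\theta^1 - \theta^2 = D$.

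The remaining step is to show $D = 0$ using the explicit running-supremum form of $\xi^i$. If instead I argue directly that $J$ is strictly convex in $\xi$ whenever $\alpha \neq 1$, then this degenerate case is the single place where extra care is needed. I would present the $\alpha \neq 1$ argument as the main proof and treat $\alpha = 1$ separately, where the problem decouples from the mean, via the uniqueness in Theorem \ref{application:teo:optimal_xi}. With uniqueness of the MFC optimizer in hand, Corollary \ref{corollary:uniqueness} concludes.
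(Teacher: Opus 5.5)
Your argument for $\alpha\neq1$ is the paper's argument (strict convexity of $J$, then Corollary \ref{corollary:uniqueness}), written out more carefully: injectivity of $X\mapsto X-\alpha\E[X]$ for $\alpha\neq1$ is exactly what lets equality in the convexity inequality force $\xi^1=\xi^2$. You are also right that for $\alpha=1$ the map $\xi\mapsto J(\xi)$ is \emph{not} strictly convex. $J$ is affine along $\xi+sD$ for any deterministic nondecreasing $D$, and the paper's one-line proof overlooks this.

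\textbf{The gap.} Your proposal does not close the case $\alpha=1$. ``The remaining step is to show $D=0$'' is precisely the content of the claim there. The route you suggest through the uniqueness in Theorem \ref{application:teo:optimal_xi} cannot deliver it. That uniqueness holds for a fixed $\theta$, while your two equilibria face different parameters, $\theta^2=\theta^1-D$. The MFG at $\alpha=1$ also does not decouple from the mean, since $c=\tfrac12 e^{-\rho t}(x-\theta)^2$ still depends on $\theta$.

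\textbf{The missing idea.} At $\alpha=1$, deterministic parts of a control are pure waste in the MFC problem. Its running cost is $\tfrac12 e^{-\rho t}\mathrm{Var}(X_t)$, which is invariant under deterministic shifts.
\begin{enumerate}[label=(\roman*)]
\item Let $\xi^1,\xi^2$ be two MFC minimizers with $D=\xi^2-\xi^1$ deterministic. $D$ has bounded variation, being the difference of $\E[\xi^2_\cdot]$ and $\E[\xi^1_\cdot]$.
\item Take a Hahn--Jordan decomposition $dD=\mu^+-\mu^-$, with $\mu^-$ concentrated on a Borel set $N$ such that $\mu^+(N)=0$.
\item Monotonicity of $\xi^2=\xi^1+D$ gives $d\xi^1\ge\mu^-$ on $N$. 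Hence $\xi^1-\mu^-([0,\cdot])$ is admissible.
\item This control has the same variance term and costs $K\int_{[0,T]}e^{-\rho t}\mu^-(dt)$ less. Optimality of $\xi^1$ forces $\mu^-=0$, and by symmetry $\mu^+=0$.
\end{enumerate}

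\textbf{A further point, for every $\alpha$.} Right-continuity does not give $D_T=0$ (or $D_T$ deterministic), because $Y^{\xi^1}=Y^{\xi^2}$ holds only $dt\otimes d\P$-a.e. You need to observe that, since $G=0$, every minimizer satisfies $\Delta\xi_T=0$ a.s. Replacing $\xi_T$ by $\xi_{T-}$ leaves the $dt$-integral unchanged and lowers the cost by $Ke^{-\rho T}\E[\Delta\xi_T]$.

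With these two additions your argument proves the corollary for all $\alpha\in\R$.
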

\begin{proof}
It is enough to notice that the map $\xi \mapsto J(\xi)$ is strictly convex, so that there exists at most one solution to the mean-field singular control problem.
This follows from the linearity of the dynamics and of the expectation, together with the strict convexity of the square and the linearity of the integral with respect to the control variable. Thus, by Corollary \ref{corollary:uniqueness}, the solution to the potential MFG is unique.
\end{proof}

We have thus proved the following result.
\begin{theorem}
The unique solution to the Mean-field Monotone Follower Problem \eqref{eq:application:mfc_payoff} is given by
\begin{equation}
    \label{eq:optimalreflMFC}
\xi^*_t = \sup_{s \in [0,t]} \left(x + \sigma W_s - b_s(\theta^*) \right)^+, \quad t \in [0, T], \quad \xi^*_{0^-}=0.
\end{equation}
\end{theorem}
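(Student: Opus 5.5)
The plan is to assemble the statement from the results already proved, in three steps: existence of a mean-field equilibrium, transfer of optimality to the MFC problem, and uniqueness.

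\emph{Existence of an equilibrium.} By Theorem \ref{application:theo:fixed-point}, the map $\Psi:\cE\to\cE$ admits a fixed point $\theta^*\in\cE\subseteq\Theta$. Let $\xi^*=\xi^*(\theta^*)$ be the optimal control given by Theorem \ref{application:teo:optimal_xi}, which is the running-supremum reflection at the boundary $b(\theta^*)$ written in \eqref{eq:optimalreflMFC}. We check the two conditions of Definition \ref{app:def:MFG_sol}.
\begin{itemize}
\item Condition (i) is exactly the optimality part of Theorem \ref{application:teo:optimal_xi}, applied with $\theta=\theta^*$ and initial time $t=0$.
\item Condition (ii) is the fixed-point identity $\theta^*_t=\Psi_t(\theta^*)=\E[X^*_t]$ for all $t\in[0,T]$.
\end{itemize}
One point needs care here. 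In the proof of Theorem \ref{application:theo:fixed-point}, the closed-graph argument identifies limits only $dt$-a.e. Both $\theta^*$ and $t\mapsto \E[X^*_t]=x-\E[\xi^*_t]$ are càdlàg. Two càdlàg functions that agree a.e. agree everywhere, so the consistency condition holds for every $t$, as Definition \ref{app:def:MFG_sol} requires. Hence $(\xi^*,\theta^*)$ solves the potential MFG.

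\emph{Optimality for the MFC problem.} Next we invoke Theorem \ref{thm:main}, with the MFG flow of measures $\mu^*_t=\cL(X^*_t)$. Since $C$ depends on the measure only through its mean, the cost $J_g(\cdot,\mu^*)$ in \eqref{eq:MFG:cost} coincides with $J_g(\cdot,\theta^*)$ in \eqref{eq:application:mfg_cost}, where $\theta^*_t=\int y\,\mu^*_t(dy)$. Therefore $(\xi^*,\mu^*)$ is a solution in the sense of Definition \ref{def:MFG:solution}.

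The paper already notes that Assumptions \ref{assumptions:general_part} and \ref{assumption:general:convexity} hold for the quadratic $C$ with $G\equiv0$. If a justification is wanted, L-joint convexity of $H(t,x,\mu,p,q)=C(t,x,\mu)$ follows because $(x,\mu)\mapsto (x-\alpha\int y\,d\mu)^2$ is the square of a function that is affine in $(x,\tilde X)$. Theorem \ref{thm:main} then shows that $\xi^*$ minimizes \eqref{eq:application:mfc_payoff}.

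\emph{Uniqueness.} Strict convexity of $\xi\mapsto J(\xi)$, as argued in Corollary \ref{app:cor:uniqueness}, gives at most one MFC optimizer. Combined with Corollary \ref{corollary:uniqueness}, this yields uniqueness of both the MFC solution and the equilibrium. One detail is that strict convexity holds only up to indistinguishability of the controls. This follows because $J(\xi^\lambda)<\lambda J(\xi^1)+(1-\lambda)J(\xi^2)$ whenever $X^{\xi^1}\neq X^{\xi^2}$ on a set of positive $dt\otimes d\P$ measure. That set is nonnull as soon as the càdlàg controls differ on a nonnull set, apart from possibly the single terminal jump. The terminal jump is handled by the linear term $K e^{-\rho T}\Delta\xi_T$, which is strictly penalizing. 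So any two optimizers must coincide.

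\emph{Main obstacle.} I expect the main difficulty to be the a.e.-versus-everywhere consistency in the existence step and matching the abstract measure formulation of Theorem \ref{thm:main} to the scalar $\theta$-formulation. Both are handled by càdlàg regularity and by the scalar dependence of $C$ on $\mu$, respectively.
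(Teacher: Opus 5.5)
Your assembly is the same as the paper's: a fixed point of $\Psi$ from Theorem~\ref{application:theo:fixed-point}, optimality via Theorem~\ref{thm:main}, and uniqueness via strict convexity of $J$ together with Corollary~\ref{corollary:uniqueness}. Your checks of the scalar-versus-measure formulation and of L-convexity are correct.

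\textbf{The gap.} The step that fails as written is the strict-convexity claim in the uniqueness part. You assert that $J(\xi^\lambda)<\lambda J(\xi^1)+(1-\lambda)J(\xi^2)$ whenever $X^{\xi^1}\neq X^{\xi^2}$ on a non-null set. Equality in the convexity of the square means $X^{\xi^1}_t-\alpha\E[X^{\xi^1}_t]=X^{\xi^2}_t-\alpha\E[X^{\xi^2}_t]$. With $D=\xi^2-\xi^1$, this reads $D_t=\alpha\E[D_t]$, which forces $D=0$ only if $\alpha\neq1$.

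For $\alpha=1$, which is allowed since $\alpha\in\R$, the running cost is $\frac12\int_0^Te^{-\rho t}\operatorname{Var}(X^\xi_t)\,dt$. This is invariant under $\xi\mapsto\xi+f$ for deterministic $f$. So $J$ is affine along such segments, even though $X^{\xi+f}=X^\xi-f\neq X^\xi$. The paper's Corollary~\ref{app:cor:uniqueness} has the same gap.

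\textbf{A possible repair.} For $\alpha=1$, convexity only shows that two optimizers differ by a deterministic c\`adl\`ag $f$ with $\int_{[0,T]}e^{-\rho t}\,df_t=0$. You need one more argument to conclude $f\equiv0$. For example, take one optimizer to be $\xi^*$.
\begin{itemize}
\item For $0<s<t<T$, $\xi^*$ is flat on $(s,t]$ with positive probability, because $b$ is bounded below there. Monotonicity of $\xi^*+f$ then forces $f_t\ge f_s$.
\item $\xi^*_0=0$: if $x>b_0$, then $\theta^*_0=x-(x-b_0)^+=b_0$, contradicting $b_0\ge\rho K+\theta^*_0$. Hence $f_0\ge0$.
\item Neither optimizer jumps at $T$, since removing a terminal jump lowers the cost by $Ke^{-\rho T}\E[\Delta\xi_T]$. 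This, rather than any strict convexity of a linear term, is the right way to dispose of the terminal jump.
\end{itemize}
Together with right-continuity, these give $df\ge0$ on $[0,T]$, and therefore $f\equiv0$.

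\textbf{A minor point.} Two c\`adl\`ag functions agreeing a.e.\ need not agree at $t=T$. This is harmless: $\theta$ enters $J_g$ and the stopping problem only through $dt$-integrals, so you can redefine $\theta^*_T$. Alternatively, take $\mu^*_t=\cL(X^*_t)$ directly when applying Theorem~\ref{thm:main}.
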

\begin{proof}
By Theorem \ref{application:theo:fixed-point}, the potential MFG of Definition \ref{app:def:MFG_sol} admits a solution $(\xi^*,\theta^*)$, which is unique by Corollary \ref{app:cor:uniqueness}.
Thus, Theorem \ref{thm:main} ensures $\xi^*$ as in  \eqref{eq:optimalreflMFC} is the unique optimal for the MFC problem \eqref{eq:application:mfc_payoff}.
\end{proof}

\subsection{Free boundary Analysis and Characterization of the Equilibrium for $\alpha \in (0,2)$}\label{sec:free_boundary}

In this section, we provide an explicit characterization of the solution to the potential MFG problem associated with the cost functional \eqref{eq:application:mfg_cost}. This will be shown to be triggered by a moving free boundary, depending on time and the mean-field parameter $(\theta_t)_{t\in [0,T]}$.

We restrict our analysis to the case $\alpha \in (0,2)$.
In this case, the value function of the optimal stopping problem \eqref{optstop:eq:optimalstopping} is monotone in time, which implies the monotonicity of the free boundary. This, in turn, yields regularity of the optimal stopping value function (such as the validity of the so-called smooth-fit property) and, as a final result, allows us to characterize the free boundary, for each fixed mean-field parameter, as the unique continuous solution to a suitable nonlinear integral equation.

\smallskip
Define the subset 
\[
\tilde \cE\coloneqq\Big\{ \theta \in \cE \textit{ such that } t\mapsto \theta_t \textit{ is continuous and } \theta_0 = x \Big\},   
\]
and consider the restriction to $\tilde \cE$ of the map $\Psi$ defined in \eqref{eq:map_psi}.
We show in Theorem \ref{freeb:thm:tarski} that $\Psi:\tilde{\cE} \to \tilde{\cE}$ admits a fixed point via Tarski's fixed point theorem.
This implies that the fixed-point $\theta^*$ given by Theorem \ref{application:theo:fixed-point} belongs to the smaller class $\Tilde{\cE}$ and, in particular, it is continuous.
In the subsequent analysis, the continuity of $\theta^*$ proves crucial in determining the integral equation for the free boundary.

\begin{lemma} \label{freeb:lemma}
Let $\alpha \in (0,2)$ and $\theta \in \tilde \cE$. 
We have:
\begin{enumerate}[label=(\roman*)]
    \item For fixed $x \in \R$, the map $[0, T] \ni t \mapsto v(t,x) \in \R$ is non-decreasing.
    \item The map $[0, T] \ni t \mapsto b_t \in \R$ is non-increasing and right-continuous.
    \item $b_t \geq \alpha (2 - \alpha) \theta_t + \rho K$ for any $t \in [0, T]$.
    \item For fixed $t \in [0, T]$, the map $\R \ni x \mapsto v(t,x) \in \R$ is concave.
    \item The smooth-fit property holds at the free boundary $b$, that is 
    \[
    \partial_x v(t, b_t-) = \partial_x v(t, b_t+) = 0 \quad \text{for any } t \in [0, T].
    \]
    \item $v \in C^{1, 2} \big(\cC) \cap  C^{\infty} (\mathring \cS\big)$ and it solves
    \begin{equation*}
    \begin{cases}
        \frac{1}{2} \sigma^2 \partial_{xx} v(t, x) - \rho v(t,x) + \partial_t v(t, x) + x - \alpha (2 - \alpha) \theta_t = 0, &\quad (t, x) \in \cC\\
        v(t, x) = K, &\quad (t, x) \in \cS.
    \end{cases}
    \end{equation*}
\end{enumerate}
\end{lemma}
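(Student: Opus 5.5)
We prove the six items in order, each resting on the previous ones, using the stochastic representation \eqref{optstop:eq:optimalstopping}. For fixed $x$ and $t<t'$, write $v(t',x)-v(t,x)$ with the same stopping time on both sides; this is legitimate because $\cT_{t'}\subseteq\cT_t$ up to truncation at $T-t'$. For \emph{(i)}, let $\tau$ be optimal for $v(t',x)$, so $\tau\le T-t'\le T-t$, and use it as a suboptimal time for $v(t,x)$. Then
\[
v(t,x)-v(t',x)\le \E\Big[\int_0^{\tau} e^{-\rho s}\alpha(2-\alpha)\big(\theta_{s+t'}-\theta_{s+t}\big)ds\Big]\le 0,
\]
since $\alpha(2-\alpha)>0$ and $\theta$ is non-increasing, so $\theta_{s+t'}\le\theta_{s+t}$. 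Hence $t\mapsto v(t,x)$ is non-decreasing. For \emph{(ii)}, part (i) shows that $\{x: v(t,x)\ge K\}$ grows with $t$, so $b$ is non-increasing. Since $\cS$ is closed by continuity of $v$ (Lemma \ref{optstop:lemma:v(t,x)}), $(t_n,b_{t_n})\in\cS$ for $t_n\downarrow t$ gives $(t,\lim b_{t_n})\in\cS$. Therefore $b_t\le\lim_n b_{t_n}\le b_t$, which yields right-continuity. Item \emph{(iii)} is exactly the inclusion \eqref{eqn:lemma:uperbound_xi_star_proof_eqn2} proved in Lemma \ref{lemma:bound_xi_star_theta}; note that it uses the right-continuity of $\theta$, which is continuous here.

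For \emph{(iv)}, the payoff is affine in $x$ for each fixed $\tau$, because $X^x_s=x+\sigma W_s$ and $\tau$ does not depend on $x$ in the strong formulation. The value $v(t,\cdot)$ is an infimum of affine functions of $x$, hence concave. For \emph{(vi)}, we argue as follows. On $\cC$, which is open by continuity of $v$, we take a rectangle $R\subset\cC$ and solve the parabolic Dirichlet problem for $\partial_t u+\frac12\sigma^2\partial_{xx}u-\rho u=-(x-\alpha(2-\alpha)\theta_t)$ with boundary datum $v$. Since $\theta$ is continuous, classical theory (Friedman) provides a solution $u\in C^{1,2}(R)\cap C(\bar R)$. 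The martingale property of $V$ stopped at the exit time of $R$ (Lemma \ref{optstop:lemma:subharmonic}), combined with It\^o's formula for $u$, identifies $u=v$ on $R$. On the interior of $\cS$, $v\equiv K$, which is trivially smooth.

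For \emph{(v)}, note that $v\le K$ and $v(t,x)=K$ for $x\ge b_t$, and $v(t,\cdot)$ is non-decreasing and concave. Concavity gives the existence of one-sided derivatives with $\partial_x v(t,b_t-)\ge\partial_x v(t,b_t+)=0$. It remains to show $\partial_x v(t,b_t-)\le 0$. We fix $t<T$ and $\varepsilon>0$, and let $\tau_\varepsilon=\tau^*(t,b_t-\varepsilon)$ be optimal for the starting point $b_t-\varepsilon$. Using $\tau_\varepsilon$ as a suboptimal time at the starting point $b_t$ gives
\[
K-v(t,b_t-\varepsilon)\le \E\Big[\int_0^{\tau_\varepsilon}e^{-\rho s}\,\varepsilon\, ds\Big]\le \varepsilon\,\E[\tau_\varepsilon].
\]
It then suffices to prove $\tau_\varepsilon\to0$ a.s. as $\varepsilon\downarrow0$. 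Since $b$ is non-increasing, $\tau_\varepsilon$ is bounded above by the first time $b_t-\varepsilon+\sigma W_s\ge b_t$, which tends to $0$ a.s. by the law of the iterated logarithm for Brownian motion. Dividing by $\varepsilon$ and using dominated convergence then gives $\partial_x v(t,b_t-)=0$. At $t=T$ we have $v(T,\cdot)\equiv K$, so the claim is immediate.

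The step I expect to be the most delicate is (vi), specifically the interior regularity and the identification $u=v$. This step requires continuity of $\theta$ so that the source term is H\"older or at least continuous, and it explains the restriction to $\tilde\cE$. If the classical Dirichlet argument is inconvenient, I would instead derive the PDE in the viscosity sense from Lemma \ref{optstop:lemma:subharmonic} and upgrade using uniform parabolicity ($\sigma>0$) together with continuity of $\theta$.
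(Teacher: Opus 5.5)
Your proposal is correct and follows the paper's proof essentially item by item: the same stopping-time comparison for (i)--(ii), reuse of the inclusion from Lemma~\ref{lemma:bound_xi_star_theta} for (iii), the infimum-of-affine-maps argument for (iv), the suboptimal-time bound plus $\tau_\varepsilon\to0$ for (v), and the rectangle/Dirichlet/martingale identification for (vi); your bound of $\tau_\varepsilon$ by the hitting time of $\varepsilon/\sigma$ by $W$ is a more direct version of the paper's law-of-the-iterated-logarithm contradiction. The one step to tighten is existence in (vi): a merely continuous source does not in general give a $C^{1,2}$ solution, so either cite a version of the classical theory that needs only H\"older continuity in $x$ uniformly in $t$ (true here, since the source is affine in $x$), or, as the paper does, subtract $a(t)=e^{\rho t}\int_{t_1}^t e^{-\rho s}\alpha(2-\alpha)\theta_s\,ds$, which is $C^1$ by continuity of $\theta$, satisfies $a'-\rho a=\alpha(2-\alpha)\theta_t$, and reduces the problem to one with the smooth source $x$.
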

\begin{proof}
$(i).$ Take $t_1, t_2 \in [0, T]$ such that $t_1 <t_2$. Since the map $t \mapsto \theta_t$ in non-increasing and $ \alpha (2 - \alpha) > 0$, the map $[0, T] \ni t \mapsto v(t,x) \in \R$ is non-decreasing.

$(ii).$ Consider again $t_1, t_2 \in [0, T]$ such that $t_1 <t_2$. Since, we already proved in point $(i)$ that the map $[0, T] \ni t \mapsto v(t,x) \in \R$ is non-decreasing, we can conclude that $b_{t_1} \geq b_{t_2}$.
As for the right-continuity of the map $[0, T] \ni t \mapsto b_t \in \R$, consider a sequence $(t_n)_{\ n \in \N} $ such that $t_n \downarrow t$ as $n \to \infty$. Since $b$ is non-increasing, $b_{t+} \coloneqq \lim_{n \to \infty} b_{t_n} \leq b_t$.
Furthermore, since $(t_n,b_{t_n}) \in \cS$ for every $n \in \N$ and $\cS$ is closed, the limit $(t, b_{t+})$ is again inside $\cS$, and, therefore it holds $b_{t+} \geq b_t$.
Thus, we get $b_{t+} = b_t$ and we conclude.

$(iii).$ Follows from the same argument as in the proof of Lemma \ref{lemma:bound_xi_star_theta}.

$(iv).$ Take $t \in [0, T]$. The concavity of the map $\R \ni x \mapsto v(t,x) \in \R$ follows from the fact that the pointwise infimum of affine functions is concave.

$(v).$ Take $t \in [0, T]$. We notice that the map $x \mapsto v(t, x)$ admits both left and right derivatives at every point of its domain since it is concave by point $(iv)$.
We start by proving $\partial_x v(t, b_t + ) =  0$. By the definition of the continuation region, we have
\[
\frac{v(t, b_t + \varepsilon) - v(t, b_t)}{\varepsilon} = \frac{K - K}{\varepsilon} = 0,
\]
for any $\varepsilon > 0$. Thus, taking the limit as $\varepsilon \to 0$, we can conclude.
We now focus on left derivatives. Taking $\varepsilon > 0 $, we have $b_t - \varepsilon \in \cC_t \coloneqq \{x \in \R: \ x < b_t\}$ and
\[
\frac{v(t, b_t) - v(t, b_t - \varepsilon)}{\varepsilon} \geq \frac{K - K}{\varepsilon} = 0.
\]
Therefore, taking the limit as $\varepsilon \to 0$, we get $\partial_x v(t, b_t - )  \geq  0$.
Conversely, take $\varepsilon > 0$ and let $\tau_\varepsilon \coloneqq \tau(t, b_t - \varepsilon)$ be the optimal stopping time for the optimal stopping problem with value function $v(t, b_t - \varepsilon)$. Then
\begin{equation} \label{eqn:freeb:lemma_proof_eqn2}
    \frac{v(t, b_t) - v(t, b_t - \varepsilon)}{\varepsilon} \leq \frac{1}{\varepsilon} \E \left[ \int_0^{\tau_\varepsilon} e^{- \rho s} \left( X^{b_t}_s - X^{b_t - \varepsilon}_s \right) ds\right] = \E \left[ \int_0^{\tau_\varepsilon} e^{- \rho s} ds\right].
\end{equation}
We now claim that $\tau^\varepsilon$ goes to zero as $\varepsilon \to 0$. Then, by taking the limit in \eqref{eqn:freeb:lemma_proof_eqn2}, we get $\partial_x v(t, b_t -)  \leq  0$ and thus $\partial_x v(t, b_t -)  =  0$.
To complete the proof, we only need to show that
\[
\lim_{\varepsilon \to 0} \tau_\varepsilon = \lim_{\varepsilon \to 0}  \inf\big\{ s\in[0,T-t]: \ b_t - \varepsilon + \sigma W_s \geq b_{s + t}\big\} = 0, \quad a.s.
\]
Since $(\tau_\varepsilon)_\varepsilon$ is non-decreasing, there exists $\tau_0$ such that $\tau_\varepsilon(\omega) \to \tau_0(\omega)$ a.e. $\omega \in \Omega$. Assume that there exists $\Omega_0 \subset \Omega$ such that $\P(\Omega_0) > 0$ and $\tau_0 (\omega) > 0$ for any $\omega \in \Omega_0$. Take $\bar \omega \in \Omega_0$. Then, there exists a positive constant $\bar \delta = \delta(\bar \omega) > 0$ such that $\tau_0 (\bar \omega) > \bar \delta$ and $\sigma W_s < b_{s + t} - b_t - \varepsilon$ for any $s \in \big[0, \big(\tau_0 - \bar \delta /2\big)\big]$. Therefore, taking the limit as $\varepsilon \to 0$, by $(ii)$, we obtain $W_s(\bar \omega) \leq (b_{s + t} - b_t)/\sigma \leq 0, \quad \forall s \in \left[0, \left(\tau_0 - \bar \delta /2\right)\right]$,
which implies that $\P(\Omega_0) = 0$ by the law of iterated logarithm. Thus, we have a contradiction.

$(vi).$ Since the value function is identically equal to $K$ in $\cS$, it obviously holds $v \in C^{\infty}(\mathring \cS)$. Thus, we can focus on the continuation region. Let $(t, x) \in \cC$. For $t_1 < t < t_2$ and $x_1 < x < x_2$, consider a rectangle $\cR \coloneqq (t_1, t_2) \times (x_1, x_2)$ such that its closure $\overline{\cR} \subset \cC$. Define the parabolic boundary $\partial_P \cR$ by the horizontal lines $[t_1, t_2) \times \{x_i\}, \ i = 1, 2$ and by the vertical line ${t_1} \times [x_1, x_2]$, and consider the following Cauchy-Dirichlet problem
\begin{equation} \label{eqn:freeb:lemma_proof_eqn3}
\begin{cases}
    \frac{1}{2} \sigma^2 \partial_{xx} u(t,x) - \rho u(t,x) + \partial_t u(t,x) + x - \alpha (2 - \alpha) \theta_t = 0, &\quad \text{in } \cR\\
    u = v, &\quad \text{on } \partial_P \cR.
\end{cases}
\end{equation}
To prove that there exists a unique solution $u \in C^{1, 2}(\cR)$, define $ a(t) \coloneqq e^{\rho t} \int_{t_1}^t e^{-\rho s} \allowbreak \alpha (2 - \alpha) \theta_s ds$, and consider the Cauchy-Dirichlet problem
\begin{equation}
\label{eqn:freeb:lemma_proof_eqn3bis}
\begin{cases}
    \frac{1}{2} \sigma^2 \partial_{xx} \tilde u(t,x) - \rho \tilde u(t,x) + \partial_t \tilde u(t,x) + x = 0, &\quad \text{in } \cR\\
    \tilde u = v - a, &\quad \text{on } \partial_P \cR, 
\end{cases}
\end{equation}
Since $a(t)$ is continuously differentiable, $v$ is continuous by Lemma \ref{optstop:lemma:v(t,x)} and the source term is $C^\infty$, by \cite[Theorem 10.3] {san_paolo_baldi} there exists a unique $\tilde u \in C^{1,2}(\mathcal{R}) \cap C(\mathcal{\overline R})$ which solves \eqref{eqn:freeb:lemma_proof_eqn3bis}. Then, it is enough to notice that $u(t,x) \coloneqq \tilde u(t,x) + a(t) \in C^{1,2}(\mathcal{R})$ satisfies \eqref{eqn:freeb:lemma_proof_eqn3}.
It remains to show that $u$ coincides with $v$ in $\cR$. Define the stopping time $\tau_{\cR} \coloneqq \inf \big\{ s \in [0, \varepsilon]: (t + s, X^x_s) \in \partial_P \cR \big\}$. Applying Itô's formula to $e^{- \rho s} u\big(t + s, X^x_s\big)$ between $0$ and $\tau_{\cR}$, and taking expectation, we get
\begin{equation} \label{eqn:freeb:lemma_proof_eqn4}
\begin{aligned}
    u(t, x) &= \E \left[ e^{- \rho \tau_{\cR}} u\big(t + \tau_{\cR}, X^x_{\tau_{\cR}}\big) \right] \\
    &\quad \ - \E \left[\int_0^{\tau_{\cR}} e^{- \rho s} \left( \frac{1}{2} \sigma^2 \partial_{xx} u(t + s, X^x_s) - \rho u(t + s, X^x_s) + \partial_t u(t + s, X^x_s) \right) ds\right] \\
    &= \E \left[ e^{- \rho \tau_{\cR}} v\big(t + \tau_{\cR}, X^x_{\tau_{\cR}}\big) + \int_0^{\tau_{\cR}} e^{- \rho s} \left( X^x_s - \alpha (2 - \alpha) \theta_{t + s} \right) ds\right],
\end{aligned}
\end{equation}
where the second equality follows from \eqref{eqn:freeb:lemma_proof_eqn3}. Recall the optimal stopping $\tau^*$ and the process $V$ defined in Lemma \ref{optstop:lemma:subharmonic}. Since $\cR \subseteq \cC$, we have $\tau_{\cR} \leq \tau^* \ \P$-a.s., so that the term inside the expected value in the right-hand side of \eqref{eqn:freeb:lemma_proof_eqn4} is indeed the process $V$ at $\tau_{\cR}$. Therefore, we obtain
\begin{equation*}
    u(t, x) = \E \left[ e^{- \rho ({\tau_{\cR} \wedge \tau^*})} v\big(t + {\tau_{\cR} \wedge \tau^*}, X^x_{\tau_{\cR} \wedge \tau^*}\big) + \int_0^{{\tau_{\cR} \wedge \tau^*}} e^{- \rho s} \left( X^x_s - \alpha (2 - \alpha) \theta_{t + s} \right) ds\right] = v(t, x),
\end{equation*}
where the second equality follows by optional sampling theorem, since the process $(V_{u \wedge \tau^*})_{u \in [0, T -t]}$ is an $\bbF$-martingale by Lemma \ref{optstop:lemma:subharmonic}. By arbitrariness of $(t, x) \in \cC$, we conclude.
\end{proof}

\begin{proposition} \label{freeb:prop:b_continuous}
Let $\alpha \in (0,2)$ and $\theta \in \tilde \cE$. The free boundary function $b$ defined in \eqref{optstop:eqn:bounday_b} is continuous on $[0, T]$.
\end{proposition}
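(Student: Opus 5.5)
Since Lemma \ref{freeb:lemma}(ii) already gives that $t\mapsto b_t$ is non-increasing and right-continuous, it remains to prove left-continuity at every $t_0\in(0,T]$. The plan is the classical argument by contradiction (in the spirit of De Angelis--Peskir type arguments for monotone boundaries). Because $b$ is monotone, the left limit $b_{t_0-}:=\lim_{t\uparrow t_0}b_t$ exists and satisfies $b_{t_0-}\geq b_{t_0}$. Suppose $b_{t_0-}>b_{t_0}$ and pick $b_{t_0}<x_1<x_2<b_{t_0-}$. Then the open rectangle $\cR:=(t_0-\varepsilon,t_0)\times(x_1,x_2)$ lies in $\cC$ for small $\varepsilon>0$, since $b_t\geq b_{t_0-}>x_2$ for $t<t_0$. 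Meanwhile the segment $\{t_0\}\times[x_1,x_2]$ lies in $\cS$, so $v(t_0,x)=K$ there.

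The next step uses the PDE from Lemma \ref{freeb:lemma}(vi). On $\cR$ we have
\[
\partial_t v=-\tfrac12\sigma^2\partial_{xx}v+\rho v-x+\alpha(2-\alpha)\theta_t .
\]
By Lemma \ref{freeb:lemma}(i), $\partial_t v\geq0$ in $\cC$, and since $v\le K$ we get $\rho v\leq \rho K$. Together these give
\[
\tfrac12\sigma^2\partial_{xx}v\leq \rho K-x+\alpha(2-\alpha)\theta_t \quad\text{on }\cR .
\]
Here I use that $\theta$ is continuous, which holds because $\theta\in\tilde\cE$. For $x\in(x_1,x_2)$ and $t$ close to $t_0$, the right-hand side is at most $\rho K-x_1+\alpha(2-\alpha)\theta_{t_0}+\eta\le b_{t_0}-x_1+\eta$, by Lemma \ref{freeb:lemma}(iii). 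This is $\leq -c<0$ for some constant $c>0$ once $\eta$ is small, because $x_1>b_{t_0}$. Shrinking $\varepsilon$ if necessary, we therefore get $\partial_{xx}v\leq -2c/\sigma^2$ uniformly on $\cR$.

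Now integrate. For $t\in(t_0-\varepsilon,t_0)$ and a nonnegative test function $\psi\in C_c^\infty((x_1,x_2))$ with $\int\psi=1$, integration by parts gives
\[
\int_{x_1}^{x_2}v(t,x)\psi''(x)\,dx=\int_{x_1}^{x_2}\partial_{xx}v(t,x)\psi(x)\,dx\leq -2c/\sigma^2 .
\]
Letting $t\uparrow t_0$ and using continuity of $v$ (Lemma \ref{optstop:lemma:v(t,x)}(ii)), the left-hand side tends to $\int K\psi''\,dx=0$. This yields $0\le -2c/\sigma^2$, a contradiction. Hence $b_{t_0-}=b_{t_0}$, and combined with right-continuity $b$ is continuous on $[0,T]$. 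One also needs $b$ finite, which was established in the admissibility proposition.

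The main obstacle is the uniformity in the bound on $\partial_{xx}v$ near $t_0$. This is exactly where continuity of $\theta$ (from $\theta\in\tilde\cE$) and the lower bound (iii) must be combined carefully, with the strict inequality $x_1>b_{t_0}$ providing the margin. A second point needing care is justifying $\partial_t v\geq0$ pointwise in $\cC$. It should follow from monotonicity in $t$ and the $C^{1,2}$ regularity of $v$ in $\cC$ from Lemma \ref{freeb:lemma}(vi).
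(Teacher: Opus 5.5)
Your proposal is correct and is essentially the paper's own argument. Both use a left jump of the monotone boundary, a rectangle in $\cC$, the PDE of Lemma~\ref{freeb:lemma}(vi) tested against $\psi\ge0$, the facts $\partial_t v\ge 0$ and (iii), and the limit $t\uparrow t_0$ with $v(t_0,\cdot)=K$. The only difference is that you first turn these into the uniform bound $\partial_{xx}v\le -2c/\sigma^2$ and pass to the limit only in $\int v\psi''$, which makes explicit the continuity of $\theta$ that the paper uses implicitly when it lets $\theta_t\to\theta_{t_0}$.

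One caveat, inherited from the paper rather than introduced by you: since $\cT_T$ contains only $\tau=0$, one has $v(T,\cdot)\equiv K$, hence $b_T=-\infty$ and (iii) fails at $t_0=T$. Both arguments therefore really establish continuity on $[0,T)$, with $b_T$ to be read as $b_{T-}$.
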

\begin{proof}
By Lemma \ref{freeb:lemma}, the free boundary $b = (b_t)_{t\in[0, T]}$ is right-continuous. We argue by contradiction and assume that there exists $t_0 \in (0, T]$ where a discontinuity occurs, i.e. $b_{t_0} < b_{t_0 -}$, by monotonicity of $b$.
Fix $t_0' \in (0, t_0)$, $x_1$ and $x_2$ such that $b_{t_0} < x_1 < x_2 < b_{t_0 -} $ and define a domain $\cR \subset \cC$ by $\cR \coloneqq (t_0', t_0) \times (x_1, x_2)$.
Take any $\psi \geq 0 $ in $C^{\infty}_c((x_1, x_2))$. From the first equation in $(vi)$ of Lemma \ref{freeb:lemma}, integrating over $(x_1, x_2)$, we have
\begin{equation} \label{eqn:freeb:prop_proof_eqn1}
\begin{aligned}
    \int_{x_1}^{x_2} &\partial_t v(t, y) \psi(y) dy \\
    &= - \int_{x_1}^{x_2} \Big(  y - \alpha (2 - \alpha) \theta_t - \rho v(t, y) \Big)\psi(y) dy  - \frac{1}{2} \sigma^2 \int_{x_1}^{x_2} \partial_{xx} v(t, y) \psi(y) dy \\
    &= \int_{x_1}^{x_2} \Big( \alpha (2 - \alpha) \theta_t + \rho v(t, y) - y \Big)\psi(y) dy  - \frac{1}{2} \sigma^2 \int_{x_1}^{x_2} v(t, y) \psi''(y) dy 
\end{aligned}
\end{equation}
for all $t \in [t_0', t_0)$, where we integrated by parts twice the term on the right-hand side. We take the limit as $t \to t_0$ in \eqref{eqn:freeb:prop_proof_eqn1}, rely on dominated convergence and use the second equation in $(vi)$ of Lemma \ref{freeb:lemma} to obtain
\begin{multline} \label{eqn:freeb:prop_proof_eqn2}
    \int_{x_1}^{x_2} \partial_t v(t_0, y) \psi(y) dy = \int_{x_1}^{x_2} \Big( \alpha (2 - \alpha) \theta_{t_0} + \rho K -y \Big)\psi(y) dy - \frac{1}{2} \sigma^2 K \int_{x_1}^{x_2}\psi''(y) dy \\
    = \int_{x_1}^{x_2} \Big(\alpha (2 - \alpha) \theta_{t_0} + \rho K - y\Big)\psi(y) dy,
\end{multline}
where the last equality follows from $\psi \in C^{\infty}_c((x_1, x_2))$. Recall that $\partial_t v$ is positive by $(i)$ of Lemma \ref{freeb:lemma}. Applying $(iii)$ of Lemma \ref{freeb:lemma} in \eqref{eqn:freeb:prop_proof_eqn2}, we get
\begin{equation} \label{eqn:freeb:prop_proof_eqn3}
    0 \leq \int_{x_1}^{x_2} \partial_t v(t_0, y) \psi(y) dy \leq \int_{x_1}^{x_2} \big( b_{t_0} - y \big)\psi(y) dy \leq \int_{x_1}^{x_2} \big(b_{t_0} - x_1\big)\psi(y) dy < 0,
\end{equation}
because we have chosen $x_1$ such that $b_{t_0} < x_1$. Therefore, we reach a contradiction and $b_{t_0} = b_{t_0 -}$.
\end{proof}

In the next result, we prove that there exists a unique $\theta^* \in \tilde \cE$ which satisfies the consistency condition for the MFG problem.

\begin{theorem}\label{freeb:thm:tarski}
Let $\alpha \in (0,2)$.
The map $\Psi:\tilde{\cE} \to \tilde{\cE}$ is well-defined and admits a fixed point.
\end{theorem}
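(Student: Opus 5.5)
The plan is to apply Tarski's fixed point theorem on the complete lattice $\tilde\cE$, ordered pointwise. There are three things to check: $\Psi$ maps $\tilde\cE$ into itself, $\Psi$ is monotone, and the lattice is complete. A subtlety is that $\tilde\cE$ with pointwise order is not obviously a complete lattice, since a pointwise supremum of continuous nonincreasing functions need not be continuous. I would therefore either work on the complete lattice $\cE$ (càdlàg nonincreasing functions bounded between $x-\delta$ and $x$, with suprema and infima taken $dt$-a.e. and then regularized to càdlàg versions) and show afterwards that the fixed point lies in $\tilde\cE$, or run a monotone iteration inside $\tilde\cE$. My first choice is the monotone iteration, since it also matches the numerical scheme announced in the introduction.

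\textbf{Monotonicity.} For $\alpha\in(0,2)$, Lemma \ref{lemma:monotonicity_b_xi} gives $\theta\le\theta' \Rightarrow \xi^*(\theta)\ge\xi^*(\theta')$. Hence $\Psi_t(\theta)=x-\E[\xi^*_t(\theta)]\le \Psi_t(\theta')$, so $\Psi$ is nondecreasing.

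\textbf{Invariance of $\tilde\cE$.} By Theorem \ref{application:theo:fixed-point} we already know that $\Psi(\theta)\in\cE$. Two properties remain.
\begin{enumerate}
\item \emph{The condition $\Psi_0(\theta)=x$.} This is equivalent to $\E[\xi^*_0]=0$, i.e.\ to $x<b_0$ (or $x\le b_0$, since a tie gives zero jump). Lemma \ref{freeb:lemma}(iii) gives $b_0\ge \alpha(2-\alpha)\theta_0+\rho K=\alpha(2-\alpha)x+\rho K$. This exceeds $x$ only when $(\alpha(2-\alpha)-1)x+\rho K>0$, so in general an initial jump may occur. I would therefore check carefully whether the class should allow $\theta_0=x-\E[(x-b_0)^+]$. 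If the statement is taken as written, I would argue that the jump $(x-b_0)^+$ is absent under the standing parameters, or else modify the class accordingly. I expect this point to need care.
\item \emph{Continuity of $t\mapsto \Psi_t(\theta)$.} By Proposition \ref{freeb:prop:b_continuous}, $b$ is continuous, so $\xi^*_t=\sup_{s\le t}(x+\sigma W_s-b_s)^+$ has continuous paths on $(0,T]$. It is dominated by the square-integrable bound of Lemma \ref{lemma:bound_xi_star_theta}, so dominated convergence gives continuity of $t\mapsto\E[\xi^*_t]$.
\end{enumerate}

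\textbf{Existence of the fixed point.} I would start from the top element $\theta^0\equiv x$ and iterate $\theta^{k+1}=\Psi(\theta^k)$. Since $\Psi(\theta^0)\le\theta^0$ and $\Psi$ is monotone, the sequence is nonincreasing and bounded below by $x-\delta$, so it converges pointwise to some $\theta^\infty\in\cE$. Continuity of $\Psi$ along monotone sequences then gives $\Psi(\theta^\infty)=\theta^\infty$. To prove this, note that $\theta^k\downarrow\theta^\infty$ implies $v(\cdot;\theta^k)\uparrow$, by monotone convergence in the stopping functional. Hence $b(\theta^k)\uparrow$, and the limit is identified with $b(\theta^\infty)$ using continuity of the value function in $\theta$, i.e.\ dominated convergence under the infimum over stopping times, as in Lemma \ref{optstop:lemma:v(t,x)}. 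It follows that $\xi^*(\theta^k)\downarrow\xi^*(\theta^\infty)$, and the expectations converge by domination. Alternatively, the closed-graph argument of Theorem \ref{application:theo:fixed-point} yields the same identification.

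\textbf{Continuity of the limit (main obstacle).} It remains to show $\theta^\infty$ is continuous. Since $\theta^\infty=\Psi(\theta^\infty)$ and $\theta^\infty\in\cE$, I would first prove that the free boundary $b(\theta^\infty)$ is continuous even when $\theta^\infty$ is merely càdlàg nonincreasing. Rerunning the proof of Proposition \ref{freeb:prop:b_continuous} with $\theta_{t_0}$ replaced by $\theta_{t_0-}$ still gives a contradiction, because the bound (iii) holds for the left limits as well. Once $b(\theta^\infty)$ is continuous, the invariance argument above shows $\Psi(\theta^\infty)$ is continuous. This extension of Proposition \ref{freeb:prop:b_continuous} to discontinuous $\theta$ is where I expect the main difficulty.
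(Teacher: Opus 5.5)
\textbf{Your objections to the paper's route are sound.} The paper applies Tarski's theorem on $\tilde\cE$ by asserting that every subset has a supremum and an infimum. This is false. The decreasing sequence $\theta^n_t=\max(x-\delta,x-nt)$ lies in $\tilde\cE$ but has no lower bound there: a continuous lower bound would satisfy $\theta_t\le x-\delta$ for all $t>0$, hence $\theta_0\le x-\delta<x$.

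The paper's claim that ``obviously $\Psi_0(\theta)=x$'' also fails, since $\Psi_0(\theta)=x-(x-b_0)^+=\min(x,b_0)$, and the jump at $0$ cannot be argued away. Write $v(0,y)-K=\inf_\tau\E[\int_0^\tau e^{-\rho s}(y+\sigma W_s-\alpha(2-\alpha)\theta_s-\rho K)\,ds]$. Use $\theta\le x$ and the bound $|\E[\int_0^\tau e^{-\rho s}W_s\,ds]|\le\sqrt{T}e^{\rho T/2}\,\E[\int_0^\tau e^{-\rho s}\,ds]$, which follows by integrating by parts and applying Cauchy--Schwarz. This gives $b_0\le\alpha(2-\alpha)x+\rho K+\sigma\sqrt{T}e^{\rho T/2}$. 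Since $1-\alpha(2-\alpha)=(1-\alpha)^2$, for $\alpha\neq1$ and $x$ large you get $\Psi_0(\theta)=b_0<x$ for every $\theta\in\cE$. So the normalization $\theta_0=x$ must be dropped, and only continuity kept.

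\textbf{The genuine gap is your final step, on which everything rests.} The iteration itself only reproduces the unique fixed point $\theta^*\in\cE$ already given by Theorem~\ref{application:theo:fixed-point} and Corollary~\ref{app:cor:uniqueness}. Two smaller slips there: $\theta^k\downarrow$ gives $b(\theta^k)\downarrow$ and $\xi^*(\theta^k)\uparrow$, not the reverse; and convergence of $v(\cdot;\theta^k)$ alone does not identify $\lim_k b(\theta^k)$ with $b(\theta^\infty)$, so only the closed-graph route works.

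The real problem is that $b(\theta)$ need not be continuous for a càdlàg nonincreasing $\theta$. Letting $t\uparrow t_0$ in the proof of Proposition~\ref{freeb:prop:b_continuous} gives $\int(\alpha(2-\alpha)\theta_{t_0-}+\rho K-y)\psi(y)\,dy\ge0$ for $\psi\ge0$ supported in $(x_1,x_2)\subset(b_{t_0},b_{t_0-})$. A contradiction would need $\alpha(2-\alpha)\theta_{t_0-}+\rho K\le b_{t_0}$. Lemma~\ref{freeb:lemma}(iii) gives this only with $\theta_{t_0}$ in place of $\theta_{t_0-}$. At left limits it gives only $\alpha(2-\alpha)\theta_{t_0-}+\rho K\le b_{t_0-}$, which is compatible with $x_1<b_{t_0-}$.

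This is not a proof artifact. By (iii), $b_{t_0-}\ge\alpha(2-\alpha)\theta_{t_0-}+\rho K$. The estimate above, applied at $t_0$ with $\theta_{s+t_0}\le\theta_{t_0}$, gives $b_{t_0}\le\alpha(2-\alpha)\theta_{t_0}+\rho K+\sigma\sqrt{T}e^{\rho T/2}$. Hence any downward jump of $\theta$ with $\alpha(2-\alpha)(\theta_{t_0-}-\theta_{t_0})>\sigma\sqrt{T}e^{\rho T/2}$ forces $b$ to jump. Note also that (vi) of Lemma~\ref{freeb:lemma} then holds only for a.e.\ $t$, since $a(t)$ is Lipschitz but not $C^1$.

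\textbf{The missing idea is that continuity is a property of the equilibrium, obtained by self-consistency.} Run the limit argument in weak form, using that $t\mapsto\int v(t,y)\psi(y)\,dy$ is nondecreasing. At a jump time $t_0\in(0,T)$ of $b$, the inequality above holds for every admissible $\psi$. It therefore forces $y\le\alpha(2-\alpha)\theta_{t_0-}+\rho K$ on $(b_{t_0},b_{t_0-})$. With (iii), this gives $b_{t_0-}-b_{t_0}\le\alpha(2-\alpha)(\theta_{t_0-}-\theta_{t_0})$.

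The running-supremum form of $\xi^*$ gives $\Delta\xi^*_{t_0}\le(b_{t_0-}-b_{t_0})\ind_{\{x+\sigma W_{t_0}>b_{t_0}\}}$. For $\theta^*=\Psi(\theta^*)$ this yields $\theta^*_{t_0-}-\theta^*_{t_0}=\E[\Delta\xi^*_{t_0}]\le\alpha(2-\alpha)\,\P(x+\sigma W_{t_0}>b_{t_0})\,(\theta^*_{t_0-}-\theta^*_{t_0})$. The factor is $<1$ because $\alpha(2-\alpha)\le1$ and $t_0>0$. So $\theta^*$ has no jumps in $(0,T)$, and Proposition~\ref{freeb:prop:b_continuous} then applies to it as stated.
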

\begin{proof}
First, we prove that $\Psi$ maps $\tilde \cE$ into $\tilde \cE$.
Since obviously $\Psi_0(\theta) = x$ and we already now that $\Psi: \cE \to \cE$ by Theorem \ref{application:theo:fixed-point}, we just need to prove that $\Psi(\theta)$ is continuous if $\theta \in \tilde{\cE}$.
Since the free boundary $b$ is continuous by Proposition \ref{freeb:prop:b_continuous}, the optimal control $\xi^*$ is continuous as well. Thus, the map $[0, T] \ni t\mapsto \Psi_t(\theta) = x - \E [\xi^*_t(\theta)]$ is continuous.
Next, we prove that the map $\theta \mapsto \Psi (\theta)$ is non-decreasing.
Since $\alpha \in (0,2)$, this follows from the mononicity of $\theta \mapsto \xi^*(\theta)$ (cf.
Lemma \ref{lemma:monotonicity_b_xi}).
To conclude, consider on $\tilde \cE$ the order relation $\leq^{\tilde \cE}$ given by $\theta^1 \leq^{\tilde \cE} \theta^2$ if and only if $\theta^1_t \leq \theta^2_t$ $dt-$a.e. This order relation implies that $\tilde \cE$ can be endowed with the lattice structure given by $\theta^1 \wedge \theta^2 \coloneqq \min\{\theta^1, \theta^2\}$ and $\theta^1 \vee \theta^2 \coloneqq \max\{\theta^1, \theta^2\}$.
Since each subset of $\tilde \cE$ has a least upper bound and a greatest lower bound, the lattice $\big(\tilde \cE,\leq^{\tilde \cE}\big)$ is complete.
The existence of the fixed point for the map $\theta \mapsto \Psi(\theta)$ then follows from Tarski's fixed point theorem (see \cite[Theorem 1]{Tarski_fixed-point}).
\end{proof}

\begin{theorem} \label{freeb:Theorem:volterra}
Let $\alpha \in (0,2)$ and $\theta \in \tilde \cE$.
$b = (b_t)_{t\in [0,T]}$ is a continuous and non-increasing solution to the integral equation
\begin{equation} \label{eq:freeb_volterraeqn}
    \E \left[ \int_0^{T} e^{-\rho s} \left(  X^{b_0}_s - \alpha (2 - \alpha) \theta_{s}  - \rho K\right) \ind_{\left\{ X^{b_0}_s \leq b_{s }\right\}}ds \right] = 0.
\end{equation}
Moreover, $b=(b_t)_{t \in [0,T]}$ is the unique solution of \eqref{eq:freeb_volterraeqn} in the class of continuous non-increasing functions such that $b_t \geq \alpha(2-\alpha)\theta_t + \rho K$ for any $t \in [0,T]$.
\end{theorem}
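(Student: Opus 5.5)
The plan is to prove \eqref{eq:freeb_volterraeqn} in its natural time-dependent form. For every $t\in[0,T]$ we aim to show
\begin{equation*}
\E\left[\int_0^{T-t} e^{-\rho s}\big(X^{b_t}_s-\alpha(2-\alpha)\theta_{s+t}-\rho K\big)\ind_{\{X^{b_t}_s\le b_{s+t}\}}ds\right]=0 .
\end{equation*}
This reduces to \eqref{eq:freeb_volterraeqn} at $t=0$. We prove uniqueness within the stated class for this family of equations, reading \eqref{eq:freeb_volterraeqn} as holding for all $t$. With only the single equation at $t=0$, a function of $s$ alone could not be pinned down.

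\emph{Existence.} First note that $v(T,x)=K$, since $\cT_T=\{0\}$. Continuity and monotonicity of $b$ come from Proposition \ref{freeb:prop:b_continuous} and Lemma \ref{freeb:lemma}(ii). The main step is to apply Peskir's local time-space formula for functions that are $C^{1,2}$ on either side of a continuous curve of bounded variation. We apply it to $e^{-\rho s}v(t+s,X^x_s)$ on $[0,T-t]$. The required regularity is given by Lemma \ref{freeb:lemma}(vi) in $\cC$ and by $v\equiv K$ in $\mathring\cS$. The local time term vanishes by the smooth-fit property in Lemma \ref{freeb:lemma}(v). We also need $\partial_t v+\tfrac12\sigma^2\partial_{xx}v$ to be locally bounded near $b$. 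This should follow from the PDE in Lemma \ref{freeb:lemma}(vi) together with the monotonicity $\partial_t v\ge 0$ and the concavity $\partial_{xx}v\le 0$; that combination gives a one-sided bound, and the PDE then bounds each term. Alternatively one can mollify in $x$, using concavity, and pass to the limit.

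In $\cC$ the generator gives $-(X-\alpha(2-\alpha)\theta)$, and in $\cS$ it gives $-\rho K$. The martingale term is a true martingale by linear growth. We therefore obtain
\begin{equation*}
v(t,x)=Ke^{-\rho(T-t)}+\E\int_0^{T-t}e^{-\rho s}\Big[(X^x_s-\alpha(2-\alpha)\theta_{s+t})\ind_{\{X^x_s<b_{s+t}\}}+\rho K\ind_{\{X^x_s\ge b_{s+t}\}}\Big]ds .
\end{equation*}
Writing $K=Ke^{-\rho(T-t)}+\int_0^{T-t}\rho Ke^{-\rho s}ds$, this becomes
\begin{equation*}
K-v(t,x)=-\E\int_0^{T-t}e^{-\rho s}\big(X^x_s-\alpha(2-\alpha)\theta_{s+t}-\rho K\big)\ind_{\{X^x_s<b_{s+t}\}}ds .
\end{equation*}
Setting $x=b_t$ makes the left side zero, since $v(t,b_t)=K$. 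Replacing $<$ by $\le$ costs nothing because $\P(X_s=b_{s+t})=0$. The lower bound $b_t\ge\alpha(2-\alpha)\theta_t+\rho K$ is Lemma \ref{freeb:lemma}(iii).

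\emph{Uniqueness.} Let $c$ be continuous, non-increasing, with $c\ge \alpha(2-\alpha)\theta+\rho K$, and solving the equations. Define
\begin{equation*}
U^c(t,x):=K+\E\int_0^{T-t}e^{-\rho s}\big(X^x_s-\alpha(2-\alpha)\theta_{s+t}-\rho K\big)\ind_{\{X^x_s<c_{s+t}\}}ds .
\end{equation*}
We then follow Peskir's four-step scheme.

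\begin{enumerate}[label=(\arabic*)]
\item By the Markov property, $e^{-\rho s}U^c(t+s,X^x_s)+\int_0^s e^{-\rho r}(\cdots)\ind_{\{X^x_r<c_{r+t}\}}dr$ is a martingale. For $x\ge c_t$, stop at the first hitting time of $c$. Before that time the integrand vanishes, and at that time $U^c=K$ by the equation. This shows $U^c=K$ on $\{x\ge c_t\}$.
\item Suppose $c_t>b_t$ at some point, and take $x\ge c_t$. Use optional sampling at $\sigma=\inf\{s:X_s\le b_{s+t}\}$, comparing with $v$, which satisfies the analogous identity with $b$. Combined with $v\le K=U^c$, this produces an integral of $(X-\alpha(2-\alpha)\theta-\rho K)$ over $\{b<X<c\}$. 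That integral is positive because $X>b\ge\alpha(2-\alpha)\theta+\rho K$ there, which gives a contradiction. Hence $c\le b$.
\item Show $U^c\ge v$ in general. This follows because $U^c$ is the payoff of the stopping time $\tau_c$ (hitting $c$), so $U^c\ge v$.
\item Suppose $c_t<b_t$, and take $x\in(c_t,b_t)$. Compare $U^c$ and $v$ along $\tau^*$, the hitting time of $b$. One gets $K-v(t,x)$ equal to the expected integral over $\{c<X<b\}$ up to $\tau^*$ of $(X-\alpha(2-\alpha)\theta-\rho K)$. Continuity of $b$ and $c$ makes this region have positive probability, and the integrand is positive on it since $X>c\ge\alpha(2-\alpha)\theta+\rho K$. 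But $v(t,x)<K$ there, so the sign conflicts with the identity, giving a contradiction. Hence $c=b$.
\end{enumerate}

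The main obstacle I expect is justifying the change-of-variable formula, namely controlling $\partial_{xx}v$ near the boundary. I would try the monotonicity and concavity bound from the PDE first, and fall back on mollification if that fails.
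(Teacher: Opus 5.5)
\textbf{Existence.} This part matches the paper. You apply Peskir's change-of-variable formula (in the relaxed form of his Remark 3.2) to $e^{-\rho s}v(t+s,X^x_s)$. Smooth fit kills the local-time term, the generator is read off from Lemma \ref{freeb:lemma}(vi), and you then evaluate at the boundary. The paper localizes with $\tau_n$; your true-martingale argument also works, since $0\le\partial_x v\le 1/\rho$.

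Only the sum $\partial_t v+\frac12\sigma^2\partial_{xx}v$ needs to be locally bounded, and that is immediate from the PDE: it equals $\rho v-x+\alpha(2-\alpha)\theta_t$ in $\cC$ and $0$ in $\mathring\cS$. Your further claim that $\partial_t v\ge 0$, $\partial_{xx}v\le 0$ and the PDE bound each term separately is wrong, since a bounded sum of a nonnegative and a nonpositive term bounds neither. It is not needed, though.

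You are right that the equation must hold for every $t$. The paper's representation \eqref{eq:freeb:representation_V} holds at all $(t,x)$, so evaluating it at $(t,b_t)$ gives the whole family. That family is what the Peskir--Shiryaev uniqueness argument uses; the single $t=0$ equation in the statement cannot determine $b$.

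\textbf{Uniqueness.} Your sketch has a genuine gap in Step (2), plus bookkeeping errors in Steps (1) and (4). Write $H(u,y):=y-\alpha(2-\alpha)\theta_u-\rho K$.

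\emph{Step (1).} When $\rho>0$, your process is not a martingale. The Markov property gives that
\[
e^{-\rho s}\big(U^c(t+s,X^x_s)-K\big)+\int_0^s e^{-\rho r}H(t+r,X^x_r)\ind_{\{X^x_r<c_{t+r}\}}\,dr
\]
is a martingale, and the term $-Ke^{-\rho s}$ matters. Without it, Step (1) would give $U^c(t,x)=K\,\E[e^{-\rho\sigma_c}]<K$.

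\emph{Step (2).} As justified, this step does not close. Optional sampling at $\sigma$ gives
\[
\E\Big[\int_0^{\sigma}e^{-\rho r}H(t+r,X^x_r)\ind_{\{b_{t+r}<X^x_r<c_{t+r}\}}\,dr\Big]=\E\big[e^{-\rho\sigma}\big(K-U^c(t+\sigma,X^x_\sigma)\big)\big].
\]
For a contradiction you need $U^c(t+\sigma,X^x_\sigma)\ge K$. On $\{\sigma<T-t\}$ you have $X^x_\sigma=b_{t+\sigma}$, which in the scenario being ruled out lies below $c_{t+\sigma}$. Step (1) says nothing there, so ``$v\le K=U^c$'' is not available at that point.

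What you need is $U^c\ge v$ at that point, together with $v=K$ on $b$; that is exactly your Step (3). Step (3) uses only Step (1), so prove it before Step (2). Then the right-hand side above is $\le 0$ while the left-hand side is $>0$, and the contradiction follows.

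\emph{Step (4).} Your identity has the wrong sign. Sample both martingales at $\tau^*$, using $U^c=K$ at $X^x_{\tau^*}=b_{t+\tau^*}$; this needs Step (2), i.e.\ $c\le b$. You get $v(t,x)-K=\E\int_0^{\tau^*}e^{-\rho r}H\,\ind_{\{c_{t+r}\le X^x_r<b_{t+r}\}}\,dr\ge 0$, not the statement with $K-v(t,x)$ on the left. This contradicts $(t,x)\in\cC$ directly, so no positive-probability argument is needed.
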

\begin{proof}
Take $(t, x) \in [0, T] \times \R$. Following a standard localization argument, we define
\begin{equation}
    \tau_n \coloneqq \inf \left\{ s \geq t: \ \int_t^s \left\vert\partial_{x}  v\left(u, X^{t, x}_u\right) \right\vert^2 du \geq n \right\},
\end{equation}
where $(X^{t, x}_s)_{s \in [t, T]}$ denotes the solution to the uncontrolled dynamics that starts from $x$ at $t$.
Thanks to $(vi)$, $(iv)$ and $(v)$ of Lemma \ref{freeb:lemma}, respectively, we have that $\big( \partial_t +\frac{1}{2} \sigma^2 \partial_{xx}  - \rho\big)v(t,x)$ is locally bounded, that $x \mapsto v(t, x)$ is concave for any $t \in [0,T]$ and that $t \mapsto \partial_x v(t, b_t \pm)$ is continuous on $[0, T]$.
Thus, we can apply the change-of-variable formula by \cite[Theorem 3.1 and Remark 3.2]{PeskirIto} and take the expectation, which yields 
\begin{equation} \label{freeb:proof_theo_eqn2}
\begin{aligned}
    & e^{-\rho t} v(t, x) = \E \left[ e^{-\rho (T \wedge \tau_n)} v\left(T \wedge \tau_n, X^{t, x}_{T \wedge \tau_n} \right) \right] \\
    &\quad \ - \E \left[ \int_t^{T \wedge \tau_n} e^{-\rho s} \left( \partial_t +\frac{1}{2} \sigma^2 \partial_{xx}  - \rho\right)  v\left(s, X^{t, x}_s \right) \ind_{\left\{ X^{t, x}_s \neq b_s\right\}}ds \right] \\
    &= \E \left[ e^{-\rho (T \wedge \tau_n)} v\left(T \wedge \tau_n, X^{t, x}_{T \wedge \tau_n} \right) + \int_t^{T \wedge \tau_n} e^{-\rho s} \left(  X^{t, x}_s - \alpha (2 - \alpha) \theta_s  \right) \ind_{\left\{ X^{t, x}_s \leq b_s\right\}}ds \right] \\
    &\quad \ + \E \left[\int_t^{T \wedge \tau_n} e^{-\rho s} \rho K \ind_{\left\{ X^{t, x}_s > b_s\right\}} ds\right] \\
    &= \E \left[ e^{-\rho (T \wedge \tau_n)} v\left(T \wedge \tau_n, X^{t, x}_{T \wedge \tau_n} \right) \right] \\
    &\quad \ + \E \left[ \int_t^{T \wedge \tau_n} e^{-\rho s} \left(  X^{t, x}_s - \alpha (2 - \alpha) \theta_s  - \rho K\right) \ind_{\left\{ X^{t, x}_s \leq b_s\right\}}ds + \int_t^{T \wedge \tau_n} e^{-\rho s} \rho Kds\right],
\end{aligned}
\end{equation}
where the second equality follows from $(vi)$ of Lemma \ref{freeb:lemma}. We now take the limit in the right-hand side of \eqref{freeb:proof_theo_eqn2}: since $\tau_n \to \infty$ as $n \to \infty$, dominated convergence theorem implies
\begin{multline} \label{freeb:proof_theo_eqn3}
    \E \left[ e^{-\rho T } v\left(T, X^{t, x}_{T} \right) + \int_t^{T} e^{-\rho s} \left( \left(  X^{t, x}_s - \alpha (2 - \alpha) \theta_s  - \rho K\right) \ind_{\left\{ X^{t, x}_s \leq b_s\right\}} + \rho K \right) ds \right] \\= \E \left[ \int_t^{T} e^{-\rho s} \left(  X^{t, x}_s - \alpha (2 - \alpha) \theta_s  - \rho K\right) \ind_{\left\{ X^{t, x}_s \leq b_s\right\}}ds + K e^{-\rho t}\right],
\end{multline} 
since $v\big(T, X^{t, x}_{T} \big) = K$. Putting \eqref{freeb:proof_theo_eqn3} back in \eqref{freeb:proof_theo_eqn2} and multiplying both terms for $e^{\rho t}$, we obtain the desired representation for the value function, i.e.
\begin{multline}\label{eq:freeb:representation_V}
    v(t, x) = K + \E \left[ \int_t^{T} e^{-\rho (s - t)} \left(  X^{t, x}_s - \alpha (2 - \alpha) \theta_s  - \rho K\right) \ind_{\left\{ X^{t, x}_s \leq b_s\right\}}ds \right] \\
    = K + \E \left[ \int_0^{T - t} e^{-\rho s} \left(  X^{x}_s - \alpha (2 - \alpha) \theta_{s + t}  - \rho K\right) \ind_{\left\{ X^{x}_s \leq b_{s + t} \right\}}ds \right].   
\end{multline}
Exploiting the arbitrariness of $(t, x) \in [0, T] \times \R$, we evaluate \eqref{eq:freeb:representation_V} at $(t,x) = (0,b_0)$, which implies \eqref{eq:freeb_volterraeqn}.
Finally, the proof of uniqueness follows from the same arguments as those in the proof of \cite[Theorem 25.3]{peskir2006optimal}.
\end{proof}

\subsubsection{An Iterative Scheme for the Equilibrium}

Finally, we briefly discuss a numerical iterative algorithm for approximating the unknown time-dependent free boundary $b$. This algorithm consists of the following steps. 
First, set $\theta^{(0)}$ to be constant. A possible choice is to set $\theta^{(0)} \equiv x$, in order to satisfy $\theta \in \tilde \cE$.
Then, for $n \in \mathbb{N}_0$, the estimate update is obtained iteratively as:
\begin{itemize}[wide]
\item Given $\theta^{(n)}$, numerically solve the following scalar non-linear integral equation exploiting the monotonicity and bound $(iv)$ of Lemma \ref{freeb:lemma} on the free boundary $b$:
\begin{equation} \label{eq:freeb:num:step}
    \mathbb{E}\left[\int_0^{T} e^{-\rho s} \Big( X^{b^{(n+1)}_0}_s  - \alpha(2-\alpha)\theta^{(n)}_{s} - \rho K\Big) \ind_{\Big\{ X^{b^{(n+1)}_0}_s \leq b^{(n + 1)}_{s} \Big\} } ds\right] = 0.
\end{equation} 
\item Given $b^{(n+1)}$, approximate 
\begin{equation}
    \theta^{(n+1)}_t = x - \mathbb{E}\left[\sup_{s\in [0,t]}\Big(x + \sigma W_s - b^{(n+1)}_s\Big)^+\right], \quad t \in [0,T], \quad \theta^{(n+1)}_{0^-}=x,
\end{equation}
using a standard Monte Carlo approximation.
\end{itemize}
Repeat these steps until the difference between $\theta^{(n+1)}$ and $\theta^{(n)}$ is under a certain tolerance. A direct implication of Theorem \ref{freeb:Theorem:volterra} gives the convergence of this fixed-point procedure to the unique equilibrium of the potential MFG.
Figure \ref{fig:numerics} illustrates the equilibrium free boundary $b(\theta^*)$, the equilibrium average $\theta^*$, as well as one realization of the equilibrium optimal control $\xi^*$.

\begin{figure}[htbp]
  \centering
  \label{fig:a}
  \includegraphics[width=0.7\textwidth]{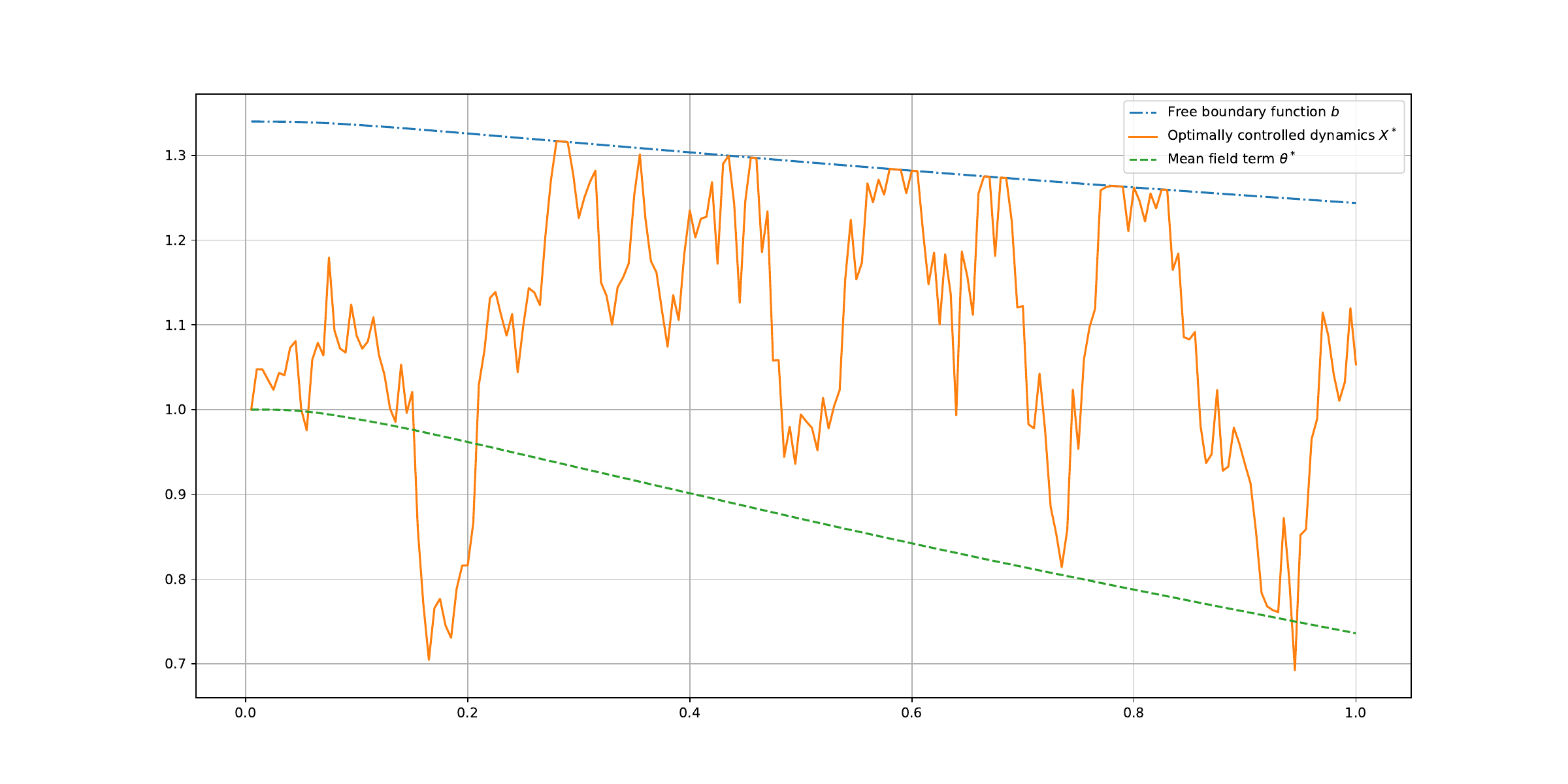}
  \includegraphics[width=0.7\textwidth]{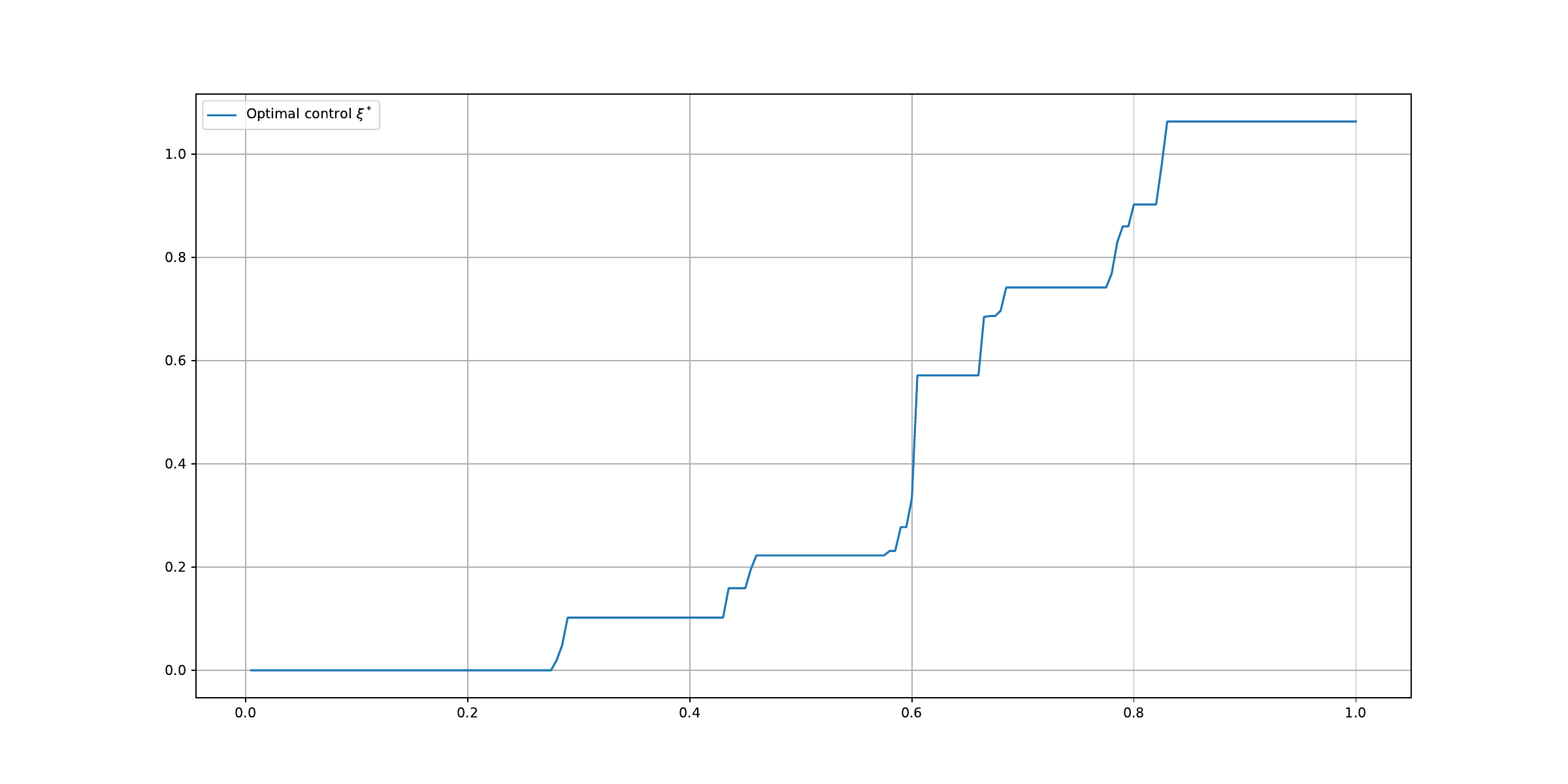}
  \caption{Top: a visual representation of the free boundary function $b$ and the mean-field parameter $\theta^*$, numerically approximated via the iterative algorithm with $T = x = \sigma = K =  1$, $\alpha = 0.2$, and $\rho = 0.5$. A sample of the optimally controlled dynamics $X^*$, obtained with the same hyperparameters, is plotted alongside the two functions. Bottom: a visual representation of the sample path of the optimal control $\xi^*$ associated with the boundary function $b$ and the optimally controlled dynamics $X^*$.} 
  \label{fig:numerics}
\end{figure}

\FloatBarrier
\bibliographystyle{abbrv}
\bibliography{biblio.bib}

@Article{hofer2024optimal,
  title   = {Optimal Control and Potential Games in the Mean Field},
  author  = {H{\"o}fer, Felix and Soner, H. Mete},
  journal = {arXiv preprint arXiv:2408.00733},
  year    = {2024},
}

@article{DenkertHorst,
  title={Extended mean-field games with multidimensional singular controls and nonlinear jump impact},
  author={Denkert, Robert and Horst, Ulrich},
  journal={SIAM Journal on Control and Optimization},
  volume={63},
  number={2},
  pages={1374--1406},
  year={2025},
  publisher={SIAM}
}

@article{Fu,
  title={Extended mean field games with singular controls},
  author={Fu, Guanxing},
  journal={SIAM Journal on Control and Optimization},
  volume={61},
  number={1},
  pages={285--314},
  year={2023},
  publisher={SIAM}
}

@article{FuHorst,
  title={Mean field games with singular controls},
  author={Fu, Guanxing and Horst, Ulrich},
  journal={SIAM Journal on Control and Optimization},
  volume={55},
  number={6},
  pages={3833--3868},
  year={2017},
  publisher={SIAM}
}

@article{dianetti2023unifying,
  title={A unifying framework for submodular mean field games},
  author={Dianetti, Jodi and Ferrari, Giorgio and Fischer, Markus and Nendel, Max},
  journal={Mathematics of Operations Research},
  volume={48},
  number={3},
  pages={1679--1710},
  year={2023},
  publisher={INFORMS}
}

@Book{carmona2018probabilistic_vol1,
  author    = {Carmona, Ren{\'e} and Delarue, Fran{\c{c}}ois},
  title     = {Probabilistic Theory of Mean Field Games with Applications. I},
  series    = {Probability Theory and Stochastic Modelling},
  volume    = {83},
  note      = {Mean field FBSDEs, control, and games},
  publisher = {Springer},
  address   = {Cham},
  year      = {2018},
}

@Book{carmona2018probabilistic_vol2,
  author    = {Carmona, Ren{\'e} and Delarue, Fran{\c{c}}ois},
  title     = {Probabilistic Theory of Mean Field Games with Applications. II},
  series    = {Probability Theory and Stochastic Modelling},
  volume    = {84},
  note      = {Mean field games with common noise and master equations},
  publisher = {Springer},
  address   = {Cham},
  year      = {2018},
}

@Article{Tarski_fixed-point,
  author  = {Tarski, Alfred},
  title   = {A lattice-theoretical fixpoint theorem and its applications},
  journal = {Pacific Journal of Mathematics},
  volume  = {5},
  year    = {1955},
  pages   = {285--309},
}

@Book{karatzas1991brownian,
  author    = {Karatzas, Ioannis and Shreve, Steven E.},
  title     = {Brownian Motion and Stochastic Calculus},
  series    = {Graduate Texts in Mathematics},
  volume    = {113},
  edition   = {2},
  publisher = {Springer},
  address   = {New York},
  year      = {1991},
}

@Book{san_paolo_baldi,
  author    = {Baldi, Paolo},
  title     = {Stochastic Calculus: An Introduction through Theory and Exercises},
  series    = {Universitext},
  publisher = {Springer},
  address   = {Cham},
  year      = {2017},
}

@Article{dianetti2020nonzero,
  author  = {Dianetti, Jodi and Ferrari, Giorgio},
  title   = {Nonzero-sum submodular monotone-follower games: existence and approximation of Nash equilibria},
  journal = {SIAM Journal on Control and Optimization},
  volume  = {58},
  number  = {3},
  year    = {2020},
  pages   = {1257--1288},
  doi     = {10.1137/19M1238782},
}

@Article{karatzas1984monotone,
  author  = {Karatzas, Ioannis and Shreve, Steven E.},
  title   = {Connections between optimal stopping and singular stochastic control. I. Monotone follower problems},
  journal = {SIAM Journal on Control and Optimization},
  volume  = {22},
  number  = {6},
  year    = {1984},
  pages   = {856--877},
  doi     = {10.1137/0322054},
}

@Book{pham2009continuous,
  author    = {Pham, Huy{\^e}n},
  title     = {Continuous-Time Stochastic Control and Optimization with Financial Applications},
  series    = {Stochastic Modelling and Applied Probability},
  volume    = {61},
  publisher = {Springer},
  address   = {Berlin},
  year      = {2009},
}

@Book{revuz2013continuous,
  author    = {Revuz, Daniel and Yor, Marc},
  title     = {Continuous Martingales and Brownian Motion},
  publisher = {Springer},
  year      = {2013},
}

@Book{peskir2006optimal,
  author    = {Peskir, Goran and Shiryaev, Albert},
  title     = {Optimal Stopping and Free-Boundary Problems},
  publisher = {Springer},
  year      = {2006},
}

@Article{ferrarideangleissasa,
  author  = {De Angelis, Tiziano and Federico, Salvatore and Ferrari, Giorgio},
  title   = {Optimal boundary surface for irreversible investment with stochastic costs},
  journal = {Mathematics of Operations Research},
  volume  = {42},
  number  = {4},
  year    = {2017},
  pages   = {1135--1161},
  doi     = {10.1287/moor.2016.0841},
}

@Article{baldursson1996irreversible,
  author  = {Baldursson, Fridrik M. and Karatzas, Ioannis},
  title   = {Irreversible investment and industry equilibrium},
  journal = {Finance and Stochastics},
  volume  = {1},
  number  = {1},
  year    = {1996},
}

@Article{dianettiferrarifischer,
  author  = {Dianetti, Jodi and Ferrari, Giorgio and Fischer, Markus and Nendel, Max},
  title   = {Submodular mean field games: existence and approximation of solutions},
  journal = {Annals of Applied Probability},
  volume  = {31},
  number  = {6},
  year    = {2021},
  pages   = {2538--2566},
  doi     = {10.1214/20-aap1655},
}

@article{LasryLions,
  title={Mean field games},
  author={Lasry, Jean-Michel and Lions, Pierre-Louis},
  journal={Japanese Journal of Mathematics},
  volume={2},
  number={1},
  pages={229--260},
  year={2007},
  publisher={Springer}
}

@article{CannerozziFerrari,
  title={Cooperation, Correlation, and Competition in Ergodic N-Player Games and Mean-Field Games of Singular Controls: A Case Study},
  author={Cannerozzi, Federico and Ferrari, Giorgio},
  journal={To appear on Mathematics of Operations Research},
  year={2026},
  publisher={INFORMS}
}

@article{DianettiFerrariTzouanas,
  title={Ergodic mean-field games of singular control with regime-switching (extended version)},
  author={Dianetti, Jodi and Ferrari, Giorgio and Tzouanas, Ioannis},
  journal={arXiv preprint arXiv:2307.12012. To appear on SIAM Journal on Control and Optimization},
  year={2026}
}

@article{DianettiLQ,
  title={Linear-quadratic-singular stochastic differential games and applications: J. Dianetti},
  author={Dianetti, Jodi},
  journal={Decisions in Economics and Finance},
  volume={48},
  number={1},
  pages={381--413},
  year={2025},
  publisher={Springer}
}

@article{Bo-etal,
  title={Constrained mean-field control with singular control: Existence, stochastic maximum principle and constrained FBSDE},
  author={Bo, Lijun and Wang, Jingfei and Yu, Xiang},
  journal={arXiv preprint arXiv:2501.12731},
  year={2025}
}

@article{ShiWu,
  title={Maximum principle for optimal control problems of extended mean-field forward--backward regime-switching systems with general singular controls},
  author={Shi, Hongyu and Wu, Zhen},
  journal={Systems \& Control Letters},
  volume={204},
  pages={106216},
  year={2025},
  publisher={Elsevier}
}

@article{Hafayed2,
  title={On optimal singular control problem for general McKean-Vlasov differential equations: necessary and sufficient optimality conditions},
  author={Hafayed, Mokhtar and Meherrem, Shahlar and Eren, {\c{S}}aban and Gu{\c{c}}oglu, Deniz Hasan},
  journal={Optimal Control Applications and Methods},
  volume={39},
  number={3},
  pages={1202--1219},
  year={2018},
  publisher={Wiley Online Library}
}

@article{Hafayed1,
  title={A mean-field necessary and sufficient conditions for optimal singular stochastic control},
  author={Hafayed, Mokhtar},
  journal={Communications in Mathematics and Statistics},
  volume={1},
  number={4},
  pages={417--435},
  year={2013},
  publisher={Springer}
}

@article{DenkertHorst-MFC,
  title={Extended mean-field control problems with multi-dimensional singular controls},
  author={Denkert, Robert and Horst, Ulrich},
  journal={arXiv preprint arXiv:2308.04378},
  year={2023}
}

@article{Christensen-etal,
  title={Two sided ergodic singular control and mean-field game for diffusions: S. Christensen et al.},
  author={Christensen, S{\"o}ren and Mordecki, Ernesto and Oli{\'u}, Facundo},
  journal={Decisions in Economics and Finance},
  volume={48},
  number={1},
  pages={241--267},
  year={2025},
  publisher={Springer}
}

@article{GuoXu,
  title={Stochastic games for fuel follower problem: N versus mean field game},
  author={Guo, Xin and Xu, Renyuan},
  journal={SIAM Journal on Control and Optimization},
  volume={57},
  number={1},
  pages={659--692},
  year={2019},
  publisher={SIAM}
}

@article{Ferrari-Tzouanas,
  title={Stationary Mean-Field Games of Singular Control under Knightian Uncertainty},
  author={Ferrari, Giorgio and Tzouanas, Ioannis},
  journal={arXiv preprint arXiv:2505.08317},
  year={2025}
}

@article{Cao-etal,
  title={Stationary discounted and ergodic mean field games with singular controls},
  author={Cao, Haoyang and Dianetti, Jodi and Ferrari, Giorgio},
  journal={Mathematics of Operations Research},
  volume={48},
  number={4},
  pages={1871--1898},
  year={2023},
  publisher={INFORMS}
}

@article{CaoGuo,
  title={MFGs for partially reversible investment},
  author={Cao, Haoyang and Guo, Xin},
  journal={Stochastic Processes and their Applications},
  volume={150},
  pages={995--1014},
  year={2022},
  publisher={Elsevier}
}

@article{Guo-etal,
  title={It{\^o}’s formula for flows of measures on semimartingales},
  author={Guo, Xin and Pham, Huy{\^e}n and Wei, Xiaoli},
  journal={Stochastic Processes and their applications},
  volume={159},
  pages={350--390},
  year={2023},
  publisher={Elsevier}
}

@article{Cardaliaguet1,
  title={Stable solutions in potential mean field game systems},
  author={Briani, Ariela and Cardaliaguet, Pierre},
  journal={Nonlinear Differential Equations and Applications NoDEA},
  volume={25},
  number={1},
  pages={1},
  year={2018},
  publisher={Springer}
}

@article{Aid-etal,
  title={A stationary mean-field equilibrium model of irreversible investment in a two-regime economy},
  author={A{\"\i}d, Ren{\'e} and Basei, Matteo and Ferrari, Giorgio},
  journal={Operations Research},
  volume={73},
  number={5},
  pages={2351--2374},
  year={2025},
  publisher={INFORMS}
}

@article{Campi-etal,
  title={Mean-field games of finite-fuel capacity expansion with singular controls},
  author={Campi, Luciano and De Angelis, Tiziano and Ghio, Maddalena and Livieri, Giulia},
  journal={The Annals of Applied Probability},
  volume={32},
  number={5},
  pages={3674--3717},
  year={2022},
  publisher={Institute of Mathematical Statistics}
}

@article{Cardaliaguet2,
  title={Learning in mean field games: the fictitious play},
  author={Cardaliaguet, Pierre and Hadikhanloo, Saeed},
  journal={ESAIM: Control, Optimisation and Calculus of Variations},
  volume={23},
  number={2},
  pages={569--591},
  year={2017},
  publisher={EDP Sciences}
}

@article{Graber,
  title={Remarks on potential mean field games},
  author={Graber, P Jameson},
  journal={Research in the Mathematical Sciences},
  volume={12},
  number={1},
  pages={13},
  year={2025},
  publisher={Springer}
}

@Article{PeskirIto,
  author  = {Peskir, Goran},
  title   = {A change-of-variable formula with local time on curves},
  journal = {Journal of Theoretical Probability},
  volume  = {18},
  number  = {3},
  year    = {2005},
  pages   = {499--535},
  doi     = {10.1007/s10959-005-3517-6},
}

@Article{bahlali2007maximum,
  author  = {Bahlali, Seid and Djehiche, Boualem and Mezerdi, Brahim},
  title   = {The relaxed stochastic maximum principle in singular optimal control of diffusions},
  journal = {SIAM Journal on Control and Optimization},
  volume  = {46},
  number  = {2},
  year    = {2007},
  pages   = {427--444},
  doi     = {10.1137/050644744},
}

@article{cannerozzi2026stationary,
  title={Stationary Mean-Field singular control of an Ornstein-Uhlenbeck process},
  author={Cannerozzi, Federico},
  journal={arXiv preprint arXiv:2601.23036},
  year={2026}
}

@article{Dumitrescu-etal,
  title={Entropy regularization in mean-field games of optimal stopping},
  author={Dianetti, Jodi and Dumitrescu, Roxana and Ferrari, Giorgio and Xu, Renyuan},
  journal={arXiv preprint arXiv:2509.18821},
  year={2025}
}

\end{document}